\newtheorem{theo}{Theorem}[section]
\newtheorem{lem}[theo]{Lemma}
\newtheorem{prop}[theo]{Proposition}
\newtheorem{cor}[theo]{Corollary}
\theoremstyle{definition}
\newtheorem{defin}[theo]{Definition}
\newtheorem{exam}[theo]{Example}
\newtheorem{rem}[theo]{Remark}
\newcommand{\void}[1]{}     
\newcommand{\HH}{\mathbb{H}}
\newcommand{\PP}{\mathbb{P}}
\newcommand{\EE}{\mathcal{E}}
\newcommand{\PSL}{{\mathrm{PSL}}}
\newcommand{\SL}{{\mathrm{SL}}}
\newcommand{\Aut}{{\mathrm{Aut}}}
\newcommand{\ZZ}{\mathbb{Z}}
\newcommand{\RR}{\mathbb{R}}
\newcommand{\QQ}{\mathbb{Q}}
\newcommand{\CC}{\mathbb{C}}
\newcommand{\tf}{\mathit{tf}}
\title{On Elliptic K3 Surfaces and Dessins d'Enfants}
\author{Michael L\"onne and Matteo Penegini}
\address{Michael L\"onne\\  
Mathematik VIII, Universit\"at Bayreuth,
NWII, Universit\"atsstrasse 30, D-95447 Bayreuth,  Germany}
\email{michael.loenne@uni-bayreuth.de}
\address{Matteo Penegini\\
Universit\`a degli Studi di Genova\\
Dipartimento di Matematica - DIMA\\
Via Dodecaneso 35,
I-16146 Genova (GE) - 
Italy} 
\email{penegini@dima.unige.it}
\subjclass[1991]{14J27, 14J28, 14H57}
\begin{document}


\begin{abstract} 
We classify subgroups of $\SL(2,\ZZ)$ up to conjugacy, which occur as monodromy groups of elliptically fibered K3 surfaces following a general strategy proposed by Bogomolov and Tschinkel.
The essential step is the factorisation of the functional invariant $j$ with second factor
a Belyi function of maximal possible degree and the classification of the corresponding
subgroups $\bar\Gamma$ of $\PSL(2,\ZZ)$ using dessins d'enfants.
\end{abstract}


\maketitle



\section{Introduction}

Elliptically fibred K3 surfaces have been the object of numerous articles
which exploit various aspects of their geometry and topology,
see \cite[Sec.11 \& 12]{SS} and the literature cited there.

The starting point of our work is the study of Bogomolov and Tschinkel 
\cite{BT03}
into the monodromy groups of elliptic fibrations. 
Recall that the monodromy group $\Gamma$ of an elliptically fibred surface
is a subgroup of $\SL(2,\ZZ)$. In fact there is a maximal open and dense subset
where the fibration restricts to a topological locally trivial torus bundle, and
$\Gamma$ is the monodromy group of that bundle up to conjugacy.

Bogomolov and Tschinkel propose a general strategy to find all monodromy
groups of elliptic surfaces, which we are going to make explicit in the case of
elliptic surfaces that are K3:
First one classifies all subgroups of $\PSL(2,\ZZ)$ that are quotients of 
monodromy groups, then one determines all subgroups of $\SL(2,\ZZ)$ with 
these quotients and finally one picks those, which are actually monodromy 
groups.

The first classification  relies on a bijection between subgroups of finite index
of $\PSL(2,\ZZ)$ and dessins d'enfants.

\begin{theo}\label{Theo_Main0}
The subgroups $\bar\Gamma$ of $\PSL(2,\ZZ)$ up to conjugacy, which occur
for elliptically fibered K3 surfaces are in bijection to $3228$
equivalence classes of planar dessins d'enfants.
\end{theo}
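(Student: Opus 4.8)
The strategy is to reduce the statement to a finite combinatorial classification and then to carry out the enumeration. Recall the correspondence between conjugacy classes of finite-index subgroups $\bar\Gamma\subset\PSL(2,\ZZ)\cong\ZZ/2\ast\ZZ/3$ and equivalence classes of dessins d'enfants: a subgroup of index $\mu$ corresponds to a transitive pair $(\sigma_2,\sigma_3)$ of permutations of $\mu$ letters with $\sigma_2^2=\sigma_3^3=\mathrm{id}$, taken up to simultaneous conjugation, that is, to a dessin of degree $\mu$. Setting $\sigma_\infty=(\sigma_2\sigma_3)^{-1}$, one reads off from $(\sigma_2,\sigma_3,\sigma_\infty)$ the invariants of the modular curve $X(\bar\Gamma)=\overline{\HH/\bar\Gamma}$: the numbers $e_2$ and $e_3$ of elliptic points of order $2$ and $3$ are the numbers of fixed points of $\sigma_2$ and of $\sigma_3$; the number $t$ of cusps is the number of cycles of $\sigma_\infty$, their lengths being the cusp widths; the genus $g$ equals the genus of the dessin; and $\mu=3e_2+4e_3+6t+12g-12$. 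The dessin is planar exactly when $g=0$. By the Bogomolov--Tschinkel description recalled above, the monodromy subgroup $\bar\Gamma$ of an elliptically fibred K3 surface $X\to\PP^1$ with functional invariant $j$ is the subgroup attached to the dessin of the maximal-degree Belyi factor $\iota$ in a factorisation $j=\iota\circ\psi$; in particular $X(\bar\Gamma)$ is dominated by $\PP^1$, hence $g=0$, and we may normalise $\iota=j_{\bar\Gamma}\colon X(\bar\Gamma)\to X(1)=\PP^1_j$.

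The core of the argument is the numerical criterion that a finite-index subgroup $\bar\Gamma\subset\PSL(2,\ZZ)$ occurs for an elliptically fibred K3 surface if and only if $X(\bar\Gamma)$ has genus $0$ \emph{and}
\[
e_2+e_3+t\le 6 .
\]
For sufficiency, one shows that the functional invariant $\iota$ itself underlies an elliptic K3 surface: such a surface is a Weierstrass equation $y^2=x^3+Ax+B$ over $\PP^1$ with $A$ a section of $\oo_{\PP^1}(8)$, $B$ a section of $\oo_{\PP^1}(12)$, and $A^3/B^2=f$ where $f=27\,\iota/\bigl(4\,(1728-\iota)\bigr)$; the constraints $3v_p(A)-2v_p(B)=v_p(f)$ with $v_p(A),v_p(B)\ge 0$ admit a solution with $\sum_p v_p(B)=12$ precisely when the sum of the least admissible values of the $v_p(B)$ is at most $12$, and an analysis of these values over the fibres of $\iota$ above $0$, $1728$ and $\infty$ shows that this sum equals $3\,(e_2+e_3+t-2)$. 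For a generic such $A$ and $B$ the resulting surface is relatively minimal with Euler number $24$, hence an elliptic K3, and its functional invariant being $\iota$ its monodromy is $\bar\Gamma$. For necessity, if $X$ is an elliptic K3 with monodromy $\bar\Gamma$ and functional invariant $j=\iota\circ\psi$, then $24=e(X)=\sum_v e(F_v)$ is at least the sum of the pole orders of $j$, which is $\deg j\ge\mu$, plus the Euler numbers of the additive fibres lying over the elliptic points of $\bar\Gamma$ --- at least $2$ over each of the $e_3$ points of order $3$ and at least $3$ over each of the $e_2$ points of order $2$, since the requirement that the monodromy of $X$ be all of $\bar\Gamma$ forbids making all of these fibres smooth by letting $\psi$ ramify over the elliptic points. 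By Kodaira's classification and the index formula this yields
\[
24\ge\mu+2e_3+3e_2=6\,(e_2+e_3+t)-12 ,
\]
hence $e_2+e_3+t\le 6$.

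Granting this criterion, the subgroups in the theorem are exactly those whose dessin is planar and satisfies $e_2+e_3+t\le 6$; this forces $\mu=3e_2+4e_3+6t-12\le 6\cdot 6-12=24$, so there are finitely many of them and the classical dessin/subgroup correspondence restricts to the asserted bijection. To produce the list one runs, for each degree $\mu\in\{1,\dots,24\}$, over all transitive pairs $(\sigma_2,\sigma_3)$ of permutations of $\mu$ letters with $\sigma_2^2=\sigma_3^3=\mathrm{id}$ up to simultaneous conjugation, retaining those whose dessin is planar and has $e_2+e_3+t\le 6$; this computer-assisted search returns $3228$ equivalence classes.

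The genuine obstacle is this final enumeration: it must be organised so as to be provably exhaustive and free of double counting, which requires a correct treatment of the equivalence of dessins and a certification of planarity and of the bound $e_2+e_3+t\le 6$ for each of the many candidate pairs. The preceding reduction is, by comparison, essentially bookkeeping with Kodaira's table of singular fibres and the $\PSL(2,\ZZ)$/dessin dictionary; the one delicate point is the necessity direction, namely ruling out that a clever choice of the intermediate cover $\psi$ could realise $\bar\Gamma$ as the monodromy of a K3 surface while $e_2+e_3+t>6$ --- equivalently, that the order-$2$ and order-$3$ fibres needed to generate $\bar\Gamma$ might be arranged to consume less Euler number than $2e_3+3e_2$.
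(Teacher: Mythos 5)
Your numerical criterion (genus $0$ and $e_2+e_3+t\le 6$) is indeed equivalent to the paper's characterization (genus $0$ and $\mathrm{index}(\bar\Gamma^{\tf})\le 24$, since $\mathrm{index}(\bar\Gamma^{\tf})=\mathrm{index}(\bar\Gamma)+3e_2+2e_3=6(e_2+e_3+t-2)$), and your sufficiency sketch via Weierstrass data is a reasonable substitute for the paper's use of the $j$-modular surface $S_{\bar\Gamma}$ together with Lemma \ref{lem_homInvStarFib}. The gap is in your necessity argument. You claim that the fibres over the $e_3$ elliptic points of order $3$ and the $e_2$ points of order $2$ must contribute at least $2e_3+3e_2$ to $e(X)$, ``since the requirement that the monodromy be all of $\bar\Gamma$ forbids making all of these fibres smooth.'' This is false when the $j$-factor $l=\deg\psi$ is larger than $1$: by Proposition \ref{prop_EulerEq} the contribution of a fibre over a point mapping to a $2$-torsion (resp.\ $3$-torsion) point is $6\{(r+1)/2\}$ (resp.\ $6\{(t+1)/3\}$), which vanishes whenever $\psi$ ramifies there with multiplicity divisible by $2$ (resp.\ $3$); such ramification produces smooth fibres of type $I_0$ and does not alter the modular monodromy, which is fixed by the canonical factorization of $j$ (Lemma \ref{lem_factor}). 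So for $l\ge 2$ your inequality $24\ge \mu+2e_3+3e_2$ does not follow from your argument, and this is precisely the delicate case: the paper proves the needed bound $e(\EE)\ge e(\bar\Gamma^{\tf})$ in Proposition \ref{prop_euler1} by a case analysis on $l$ and $e_2$, using the combinatorial bound $e_2+e_3\le k+1$, the divisibility of $e(\EE)$ by $12$, and a Riemann--Hurwitz count of the ramification of $\psi$ in the boundary case $l=2$, $e_2=k+1$ (with a further refinement in Proposition \ref{prop_jfactor1824}). Your sketch offers no substitute for this step.

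The second gap concerns the number $3228$ itself, which is the actual content of the theorem: you delegate it to an unspecified, uncertified computer search over all dessins of degree at most $24$. The paper's proof of Theorem \ref{Theo_Main0} \emph{is} this enumeration, organized so as to be checkable: the set $K$ is stratified by the index $6,12,18,24$ of the torsion-free retract $\bar\Gamma^{\tf}$, the fibres of the retraction $\tf$ are counted by enumerating loop substitutions on the $2+6+26+191$ torsion-free dessins modulo their symmetry groups (determined in Lemma \ref{lem_freetype} and Proposition \ref{prop_sym24}, with a Burnside-type count in Proposition \ref{prop_tot24}), giving $6+28+232+2962=3228$. Without carrying out this count, or at least specifying and certifying the search, the specific number asserted in the statement remains unproven in your proposal.
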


To get the list of conjugacy classes of monodromy groups in $\SL(2,\ZZ)$ one
has to add some finite information to the planar graphs.
This number is shown to depend only on the planar graph and how its symmetry
group acts on vertices and regions.
In fact we show:
\begin{itemize}
\item
$3411$ conjugacy classes of subgroups of $\SL(2,\ZZ)$ exist which occur
as monodromy groups of elliptically fibred K3 surfaces.
\item
$3153$ of these are in $1:1$ bijection with their quotients in $\PSL(2,\ZZ)$.
\item
$258$ map non-bijectively to $75$ classes of subgroups of $\PSL(2,\ZZ)$.
\end{itemize}

There is a similar study of Shimada, which counts instead all the isomorphism
types of root lattices, which occur for elliptically fibred K3 surfaces. There are
$3278$ such lattices according to \cite{ShK3} and a refined count on connected
families gives $3932$.
However, it is not true that the two counts give close numbers because of a
correspondence between monodromy groups and root lattices that is close
to a bijection. Rather as is shown in \cite{HL22} there are only relatively few
cases, where a monodromy group can be put into a correspondence with
a root lattice.

We will now discuss the contents of the paper in some more detail:

In Section \ref{sec_Prliminaries} we shall set up notation and terminology and we
present some preliminaries. In particular we recall the theory of modular subgroups and of elliptically fibered surfaces. 

The section \ref{sec_dessins} is devoted to presenting the theory of dessins d'enfantes and establishing the relations between these planar graphs and the modular subgroups of an elliptic fibration. It is worth noting that all the graphs for torsion free modular subgroups of index 6,12 and 18 are drawn in the section.   

In Section \ref{sec_Euler} using the theory of  $j$-modular curves we establish a simple formula that relates the Euler number of an elliptic fibration $\EE$ to the index of a modular subgroup $\bar{\Gamma}$ of $\PSL(2,\ZZ)$. This is the content of the Proposition \ref{prop_EulerEq}.

While the classification of torsion free modular subgroups up to conjugacy was already established see (e.g., \cite{SB21}) a classification of torsion modular subgroups was still missing. The content of Section \ref{sec_CTG} is to establish the number of torsion subgroups relating them to planar graph.

The last Section \ref{sec_CG} is devoted to the proof of Theorem \ref{Theo_Main0}.

\medskip

\textbf{Notation and conventions.} 
	
	We work over the field $\mathbb{C}$ of complex numbers.

	\medskip
	
\textbf{Acknowledgments}  We would like to thanks the organizing committee of the Conference "{\it Aspects of Algebraic Geometry}'' held in Cetraro in September 2023, in particular to give us the opportunity to work there in a stimulating environment. We would also like to thank Alice Garbagnati for useful conversations.   The second author was partially supported by GNSAGA-INdAM and PRIN 2020KKWT53 003 - Progetto: \emph{Curves, Ricci flat Varieties and their Interactions} and partially supported by the DIMA - Dipartimento di Eccellenza 2023-2027. 

\section{Preliminaries}\label{sec_Prliminaries}

The theory of elliptically fibered K3 surfaces is quite interesting because it connects many mathematical worlds that, at a first sight, have no connections. We are speaking about modular subgroups of finite index, branched covers of Riemann Surfaces and bipartite graph. Let us analyze these connections first.

\subsection{Modular Subgroups}

We recall that the \emph{Special Linear Group} with integer coefficients is the group 
\[
\SL(2,\ZZ):=\left\{ \begin{pmatrix} a & b \\
c & d \end{pmatrix}
\mid ad-bc=1, \, a,b,c,d \in \ZZ \right\}.
\]
Let $I=\textrm{diag}(1,1)\in \SL(2,\ZZ)$ then we can define the \emph{Modular Group}
\[
\PSL(2, \ZZ) := \SL(2,\ZZ)/ \pm I.
\]

The definitions above yield a short  exact sequence of groups
\begin{equation}\label{eq_SequenceSLPSL}
1 \longrightarrow \pm I \longrightarrow \SL(2, \ZZ) \stackrel{\xi}{\longrightarrow} \PSL(2, \ZZ) \longrightarrow 1.
\end{equation}

Consider the following classes of matrices in $\PSL(2,\ZZ)$ 
\[
S:=\xi\begin{pmatrix}0 & -1 \\
1 & 0\end{pmatrix}, \quad T:=\xi\begin{pmatrix}1 & 1 \\
0 & 1\end{pmatrix}.
\]
Then the elements $S$ and $ST$ have order respectively $2$ and $3$ in $\PSL(2,\ZZ)$. The following result is well known, see e.g. \cite[Theorem VII.2]{S73}.

\begin{theo} The group $\PSL(2, \ZZ)$ is generated by $S$ and $T$. Moreover, $\PSL(2,\ZZ)$ is isomorphic as a group to the free product $\ZZ/2 * \ZZ/3$. In particular, we have the following presentation
\[
\PSL(2,\ZZ) \cong \left\langle S,T\mid S^{2}=\left(ST\right)^{3}\right\rangle.
\]
\end{theo}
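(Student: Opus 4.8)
The plan is to deduce all three assertions simultaneously from the single claim that the natural homomorphism
\[
\phi\colon \ZZ/2 * \ZZ/3 \longrightarrow \PSL(2,\ZZ)
\]
is an isomorphism. Since $S$ and $U:=ST$ have orders $2$ and $3$ in $\PSL(2,\ZZ)$ (as recalled just before the statement), the universal property of the free product yields a unique homomorphism $\phi$ carrying the generator of $\ZZ/2$ to $S$ and the generator of $\ZZ/3$ to $U$; its image is $\langle S,U\rangle$, which equals $\langle S,T\rangle$ because $T=SU$ in $\PSL(2,\ZZ)$. Thus surjectivity of $\phi$ is exactly the assertion that $S$ and $T$ generate, injectivity is exactly the free-product assertion, and the presentation in the statement $\langle S,T\mid S^{2}=(ST)^{3}=1\rangle$ follows from the standard presentation $\langle s,u\mid s^{2}=u^{3}=1\rangle$ of $\ZZ/2 * \ZZ/3$ by the Tietze change of generators $t:=su$, which turns $u^{3}=1$ into $(ST)^{3}=1$ while leaving $s^{2}=1$ intact. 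So everything reduces to proving that $\phi$ is bijective.

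For surjectivity I would run the Euclidean division algorithm on matrix entries inside $\SL(2,\ZZ)$. Let $M\in\SL(2,\ZZ)$ have lower row $(c,d)$. If $c=0$, then $ad=1$ forces $M=\pm T^{k}$ for some $k\in\ZZ$, so the class of $M$ in $\PSL(2,\ZZ)$ lies in $\langle S,T\rangle$. If $c\neq 0$, choose $n\in\ZZ$ with $|cn+d|<|c|$; then $MT^{n}S$ has lower-left entry $cn+d$, of strictly smaller absolute value than $c$. Iterating finitely many times produces a matrix with lower-left entry $0$, hence in $\pm\langle T\rangle$, and since $\langle S,T\rangle$ is a subgroup, unwinding shows $M\in\langle S,T\rangle$. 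Hence $\phi$ is onto. (Alternatively one can use the classical covering of $\HH$ by the $\langle S,T\rangle$-translates of the standard fundamental domain $D=\{z\in\HH:\ |z|\ge 1,\ |\mathrm{Re}\,z|\le\tfrac12\}$, obtained by maximizing $\mathrm{Im}(gz)$ over $g\in\langle S,T\rangle$ and then moving into the vertical strip by a power of $T$; this gives the same conclusion after transporting an interior point of $D$ back to itself.)

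For injectivity I would apply the ping-pong lemma to the action of $\PSL(2,\ZZ)$ on $\PP^{1}(\RR)$. Take the subgroups $G_{1}=\langle U\rangle$ of order $3$ and $G_{2}=\langle S\rangle$ of order $2$, and the disjoint nonempty sets $X_{1}=(-\infty,0)$ and $X_{2}=(0,\infty)$. Using $S(z)=-1/z$, $U(z)=-1/(z+1)$ and $U^{2}(z)=-(z+1)/z$ one checks at once that $S(X_{1})\subseteq X_{2}$, that $U(X_{2})\subseteq(-1,0)\subseteq X_{1}$, and that $U^{2}(X_{2})\subseteq(-\infty,-1)\subseteq X_{1}$. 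Since the factor $G_{1}$ has order at least $3$, the ping-pong lemma gives $\langle S,U\rangle\cong\langle S\rangle*\langle U\rangle\cong\ZZ/2 * \ZZ/3$; combined with surjectivity this says precisely that $\phi$ is an isomorphism, establishing all three assertions.

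The only genuinely non-formal points are the termination of the Euclidean reduction in the surjectivity step (immediate from well-ordering) and the verification of the ping-pong hypotheses in the injectivity step. I expect the latter to be the part that most needs care: the crucial feature is that one free factor has order at least $3$, which is exactly what rules out any unexpected relation between $S$ and $U$ that would force $\phi$ to have a kernel. Once the two ping-pong sets and the dynamics are pinned down, the remainder is bookkeeping about presentations and about passing between $\SL(2,\ZZ)$ and $\PSL(2,\ZZ)$.
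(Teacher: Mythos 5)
Your proposal is correct and complete. Note, however, that the paper does not prove this statement at all: it quotes it as well known and refers to Serre's \emph{A Course in Arithmetic} (Theorem VII.2), where the argument runs through the action of $\PSL(2,\ZZ)$ on the upper half-plane $\HH$ and the standard fundamental domain. Your route is genuinely different and entirely self-contained: generation by $S$ and $T$ via Euclidean reduction of the bottom row of a matrix in $\SL(2,\ZZ)$ (your computation that $MT^{n}S$ has lower-left entry $cn+d$ is right, and termination is immediate), and freeness via ping-pong on $\PP^{1}(\RR)$ with $X_{1}=(-\infty,0)$, $X_{2}=(0,\infty)$, where the dynamics of $S$, $U=ST$ and $U^{2}$ are verified correctly and the hypothesis that one factor has order at least $3$ is rightly flagged as the essential point. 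The fundamental-domain proof buys more (a tessellation of $\HH$, which is in fact the geometric backbone of the dessins formalism used later in the paper), while your argument is more elementary and isolates exactly the algebraic content needed for the theorem. One small remark: the presentation as displayed in the paper reads $S^{2}=(ST)^{3}$ without setting these equal to $1$; taken literally that presents the braid group $B_{3}$ rather than $\PSL(2,\ZZ)$, and your version $\left\langle S,T\mid S^{2}=(ST)^{3}=1\right\rangle$, obtained by the Tietze move $t=su$ from $\left\langle s,u\mid s^{2}=u^{3}=1\right\rangle$, is the correct reading.
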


We are interested in studying the subgroups of $\SL(2,\ZZ)$ and $\PSL(2,\ZZ)$. 

\begin{defin} A \emph{Modular Subgroup} is a finite index subgroup of $\PSL(2, \ZZ)$. 
\end{defin}

Any finite index subgroup $\bar{\Gamma}$ of $\PSL(2,\ZZ)$ has a presentation with $g,r,h$ non negative integers and $m_i\in\{2,3\}$
as follows:

Generators:
\[
A_1, B_1,\ldots ,A_g, B_g, E_1,\ldots ,E_r, P_1,\ldots ,P_h.
\]
And relations:
\[
E^{m_1}_1 = \ldots = E^{m_r}_r = P_1 ...P_hE_1 ...E_r \prod^g_{i=1}[A_i,B_i] = 1.
\]
The generators $P_i$ are called \emph{parabolic}, the $E_i$ are called \emph{elliptic} and $A_i$ and $B_i$ are called \emph{hyperbolic}. From the presentation of the group $\bar{\Gamma}$, it is clear that $\bar{\Gamma}$ is \emph{torsion free}  if it has no elliptic elements. We call the positive integer $g$ the genus of the group. If the genus of $\bar{\Gamma}$ is zero, then $\bar{\Gamma}$  can be generated  by parabolic and elliptic elements only. We will work only with genus zero subgroups. Using Kuroshs theorem \cite{K34}, a torsion free subgroup of finite index $\bar{\Gamma}$ is a free group. 
Since from now $\bar{\Gamma}$ has genus zero, its rank is $h -1$, where $h$ is the number of parabolic generators of $\bar{\Gamma}$.

The modular subgroups are important because they are subgroups of the group of isometries of the hyperbolic plane. If we consider the upper half-plane model $\HH$ of hyperbolic plane geometry, then the group of all orientation-preserving isometries of $\HH$ consists of all M\"obius transformations of the form

\[
z\mapsto
\begin{pmatrix} a & b \\
c & d \end{pmatrix}z=Az= \frac{az + b}{cz + d}, \textrm{ with } A \in \SL(2,\RR)
\]
and induces an action of $\PSL(2,\RR)$.
A subgroup of finite index $n$ in $\PSL(2,\ZZ)$ has a fundamental domain in the upper
half-plane. We suppose its boundary to have $e_i$ inequivalent elliptic fixed point
vertices of order $i$ (for $i = 2$ and $3$) and $h$ inequivalent parabolic vertices, so that
the genus $g$ of the subgroup satisfies the Riemann-Hurwitz formula:
\begin{equation}\label{eq_RH}
g=1+\frac{n}{12}-\frac{e_2}{4}-\frac{e_3}{3}-\frac{h}{2}.
\end{equation}

\begin{defin}\label{def_type} 
The numerical datum 
\[
(n; g, h, e_2, e_3 )
\]
is called the \emph{type} of a modular subgroup.
\end{defin}

For geometric reasons, see Remark \ref{rem_Index24}, we will be interested in modular subgroups $\bar{\Gamma}$ of finite index equal to $6k$ with $k\in\{1,2,3,4\}$ and genus $0$. In this case, if $\bar{\Gamma}$  is torsion free, by \eqref{eq_RH} we have 
\[
n=6(h-2).
\]
Therefore, for $n=6k$ the number of the parabolic generators of $\bar{\Gamma}$ is $h=k+2$.

\begin{defin}\label{rem_TTT} 
The exact sequence \eqref{eq_SequenceSLPSL} induces an exact sequence of subgroups
\begin{equation}\label{eq_seqGG}
1 \longrightarrow \ZZ/2 \longrightarrow \xi^{-1}(\bar{\Gamma}) \stackrel{\xi}{\longrightarrow} \bar{\Gamma} \longrightarrow 1.
\end{equation}
We call a \emph{lift} of $\bar{\Gamma}$ any subgroup $\Gamma \subset \xi^{-1}(\bar
{\Gamma})$ that surjects onto $\bar{\Gamma}$.
\end{defin}

To understand all lifts $\Gamma$ of any given modular subgroup $\bar{\Gamma}$
note that either $\Gamma = \xi^{-1}(\bar{\Gamma})$ or $\Gamma$ is the image
of a homomorphism splitting the exact sequence \eqref{eq_seqGG}.

\begin{lem}\label{lem_split} The group $\bar{\Gamma}$ does not contain elements of order 2 if and only if the exact sequence \eqref{eq_seqGG} splits.
\end{lem}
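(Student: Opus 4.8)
The plan is to prove both implications from one elementary fact: $-I$ is the unique element of order $2$ in $\SL(2,\ZZ)$. Indeed, if $A^2 = I$ then the minimal polynomial of $A$ divides $(x-1)(x+1)$; the case of two distinct eigenvalues $\pm 1$ is excluded by $\det A = 1$, so $A = \pm I$. Everything else is bookkeeping with the presentation of $\bar\Gamma$ recalled before the statement and the centrality of $-I$.

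For the implication that a splitting forces the absence of order-$2$ elements, suppose $s \colon \bar\Gamma \to \xi^{-1}(\bar\Gamma) \subseteq \SL(2,\ZZ)$ is a homomorphism with $\xi \circ s = \mathrm{id}_{\bar\Gamma}$, and suppose for contradiction that some $\bar g \in \bar\Gamma$ has order $2$. Then $s(\bar g)^2 = s(\bar g^{\,2}) = I$, while $s(\bar g) \neq I$ since $\xi(s(\bar g)) = \bar g \neq 1$; by the fact above $s(\bar g) = -I$, whence $\bar g = \xi(s(\bar g)) = 1$, a contradiction.

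For the converse I would construct a splitting directly from the presentation, with hyperbolic generators $A_i, B_i$, elliptic generators $E_1,\dots,E_r$ of orders $m_i \in \{2,3\}$, parabolic generators $P_1,\dots,P_h$, and relations $E_i^{m_i} = 1$ and $P_1\cdots P_h\, E_1\cdots E_r \prod_{i=1}^g [A_i,B_i] = 1$. First I lift the elliptic generators compatibly: for any lift $F_i \in \SL(2,\ZZ)$ of $E_i$ one has $F_i^{m_i} \in \{\pm I\}$; if $m_i = 2$ and $F_i^2 = -I$ then $F_i \neq \pm I$, so $E_i$ would have order $2$ in $\bar\Gamma$, contrary to hypothesis; hence either $F_i^{m_i} = I$ already, or $m_i = 3$ and $F_i^3 = -I$, in which case I replace $F_i$ by $-F_i$, whose cube is $I$. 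Either way $F_i^{m_i} = I$. Next I pick arbitrary lifts $\tilde A_i, \tilde B_i$ of $A_i, B_i$ and $\tilde P_1,\dots,\tilde P_h$ of $P_1,\dots,P_h$; the word $W := \tilde P_1\cdots \tilde P_h\, F_1\cdots F_r \prod_{i=1}^g [\tilde A_i,\tilde B_i]$ maps to $1$ in $\bar\Gamma$, hence $W \in \{\pm I\}$. Since $\bar\Gamma$ has at least one parabolic generator (in our situation $h = k+2 \ge 3$), replacing $\tilde P_h$ by $-\tilde P_h$ multiplies $W$ by $-I$ (as $-I$ is central), so I may arrange $W = I$. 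Now all defining relations of $\bar\Gamma$ hold among these matrices, so the assignment extends to a homomorphism $s \colon \bar\Gamma \to \xi^{-1}(\bar\Gamma)$ with $\xi \circ s = \mathrm{id}_{\bar\Gamma}$, i.e.\ a splitting of \eqref{eq_seqGG}.

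The one delicate point is this converse: one must be certain that the sign ambiguities in the lifts of the generators suffice to kill all relations at once. This is exactly what the hypothesis buys — absence of order-$2$ elements forces every relevant elliptic order to be odd, so a sign flip repairs $F_i^{m_i} = I$ with no side effect, and the single long relation is then corrected by flipping one parabolic lift (possible because $h \ge 1$ for a modular subgroup). Conceptually the argument computes $H^2(\bar\Gamma,\ZZ/2) = 0$, which one could also extract from the Kurosh decomposition $\bar\Gamma \cong F_{h-1} \ast (\ZZ/3)^{\ast a}$ valid for $2$-torsion-free $\bar\Gamma$ of genus $0$. In a careful write-up I would additionally justify that each elliptic generator has order exactly $m_i$ in $\bar\Gamma$ and that $h \ge 1$ holds for every finite-index subgroup of $\PSL(2,\ZZ)$.
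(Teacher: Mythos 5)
Your proof is correct, but the direction ``no $2$-torsion $\Rightarrow$ the sequence splits'' is argued by a genuinely different route than in the paper. The paper (following Bogomolov--Tschinkel) invokes the Kurosh decomposition: a finite-index $\bar\Gamma\subset\PSL(2,\ZZ)$ without $2$-torsion is a free product of copies of $\ZZ$ and $\ZZ/3$, so one only has to lift each free-factor generator to an element of the same order in $\SL(2,\ZZ)$ (replacing $F$ by $-F$ when $F^3=-I$), and the universal property of the free product yields the section with no further relation to verify. You instead work with the Fuchsian presentation carrying the single long relation $P_1\cdots P_h E_1\cdots E_r\prod_i[A_i,B_i]=1$, lift the elliptic generators as above, and then repair the long relation by flipping the sign of one parabolic lift, using that $-I$ is central and that the ambiguity of the lifted word is exactly $\pm I$. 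Your version avoids Kurosh's theorem at the price of the extra input $h\geq 1$; that is automatic for any finite-index subgroup of $\PSL(2,\ZZ)$ (it contains a power of the parabolic $T$, hence has at least one cusp), and the case $m_i=2$ you worry about simply does not occur under the hypothesis, since the elliptic generators of such a presentation have order exactly $m_i$ --- so both points you flag at the end are standard and unproblematic. The converse direction is essentially identical in the two treatments: $-I$ is the unique element of order $2$ in $\SL(2,\ZZ)$, so preimages of $2$-torsion elements have order $4$ and no section can exist; your phrasing through the section map is just a rewording of the paper's argument.
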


\begin{proof} One implication is proven in  \cite[Lemma 4.1]{BT03}. Following their argument and using Definition \ref{rem_TTT} we have the following. If $\bar{\Gamma}$ has no element of order $2$ then it has a presentation as a free product of copies of $\ZZ$ and $\ZZ/3$. We lift the generators of these free generating subgroups to elements of the same order in $\SL(2,\ZZ)$ and we obtain a subgroup $\Gamma$ which projects isomorphically onto $\bar{\Gamma}$. This yields the desired splitting.

It remains to prove that a splitting of  \eqref{eq_seqGG} does not exists if $\bar{\Gamma}$ contains a $2$-torsion element. This is clear, because the preimage of a $2$-torsion is a $4$-torsion element since $-I$ is the only element of order $2$ in $\SL(2,\ZZ)$. 
\end{proof}

\begin{lem}\label{lem_with2tors} If $\bar{\Gamma}$ does  contain elements of order 2 then the map $\xi_{|_{\Gamma}}$ in the exact sequence \eqref{eq_seqGG} is two to one. Moreover, the lift $\Gamma\subset \SL(2,\ZZ)$ of $\bar{\Gamma}$ is unique.
\end{lem}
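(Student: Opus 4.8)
The plan is to deduce both statements directly from Lemma~\ref{lem_split} together with the dichotomy for lifts recorded immediately after Definition~\ref{rem_TTT}.

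First I would note that, since $\bar\Gamma$ contains an element of order $2$, Lemma~\ref{lem_split} says the exact sequence \eqref{eq_seqGG} does \emph{not} split; equivalently, there is no group homomorphism $s\colon \bar\Gamma\to\xi^{-1}(\bar\Gamma)$ with $\xi\circ s=\mathrm{id}_{\bar\Gamma}$. Next, recall that any lift $\Gamma\subset\xi^{-1}(\bar\Gamma)$ is, by the observation following Definition~\ref{rem_TTT}, either equal to $\xi^{-1}(\bar\Gamma)$ itself or the image of such a splitting homomorphism. Combining these two facts, the second alternative cannot occur, so $\Gamma=\xi^{-1}(\bar\Gamma)$ is the unique lift, giving the uniqueness claim. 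For this lift the restriction $\xi_{|_\Gamma}$ has kernel $\Gamma\cap\ker\xi=\ker\xi=\pm I\cong\ZZ/2$, hence is exactly two to one, which is the first assertion.

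The one point that deserves a line of justification—and the only place where any real argument is hidden—is the dichotomy ``a lift is either $\xi^{-1}(\bar\Gamma)$ or a splitting image''. Here I would argue: given $\Gamma\subset\xi^{-1}(\bar\Gamma)$ surjecting onto $\bar\Gamma$, the intersection $\Gamma\cap\ker\xi\subset\pm I$ is either all of $\pm I$ or trivial. In the first case $\ker\xi\subset\Gamma$, so surjectivity forces $\Gamma=\xi^{-1}\bigl(\xi(\Gamma)\bigr)=\xi^{-1}(\bar\Gamma)$; in the second case $\xi_{|_\Gamma}\colon\Gamma\to\bar\Gamma$ is a bijective homomorphism, hence an isomorphism, and its inverse is a section of \eqref{eq_seqGG}, so $\Gamma$ is a splitting image. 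This is exactly the reasoning already flagged in the text preceding the lemma, so in the write-up it can either be cited or reproduced in these two sentences. No step presents a genuine obstacle; the content is entirely a bookkeeping consequence of Lemma~\ref{lem_split}.
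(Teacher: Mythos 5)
Your argument is correct and amounts to the paper's own proof: the paper simply notes that any lift must contain $-I$ (the same $4$-torsion observation that proves the non-splitting direction of Lemma \ref{lem_split}), whence $\Gamma=\xi^{-1}(\bar{\Gamma})$ is the unique lift and $\xi_{|_{\Gamma}}$ is two to one. Your routing through Lemma \ref{lem_split} plus the lift dichotomy, with the short justification of that dichotomy, is a formal repackaging of exactly this observation, so the approaches are essentially the same.
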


\begin{proof} The first assertion follows from the exact sequence \eqref{eq_seqGG}. The uniqueness of the lifts follows from the fact that $\Gamma$ must contain $-I$.
\end{proof}

 If we are not in the situation of Lemma \ref{lem_with2tors}, then there may be several lifts of $\bar{\Gamma}$.

\subsection{Elliptically fibered K3 surfaces}
As already mentioned modular subgroups have many geometric interpretations, we shall develop one of these here.
A K3 surface $S$ is said to be  \emph{elliptically fibered} if there is a surjective map
\[
f\colon S \longrightarrow \PP^1
\]
whose generic fibre is a genus 1 curve. The fibration  $f\colon S \longrightarrow \PP^1$ is a \emph{Jacobian fibration} if it has a section $s$. Moreover, as custom, we shall denote a Jacobian fibration by $f\colon \mathcal{E} \longrightarrow \PP^1$ (or simply by $\mathcal{E}$) where we think of $\mathcal{E}$ as an elliptic curve over the function field $\CC(t)$. Notice that  we suppose that all the fibrations considered in this text are \emph{relatively minimal}. 

We recall the definition of the \emph{monodromy group} $\Gamma$ of $\EE$. For a detailed construction of $\Gamma$ see e.g. \cite[Section 1]{BT03}, here we sketch only the main idea.  The complement  of the union of singular fibres 
of a Jacobian fibration $\EE$ is topologically a torus bundle over the base punctured
at the critical values of the fibration.  Letting $B$ the set of points in $\PP^1$ over which $\EE$ has a singular fibre, we have that  $\EE |_{\PP^1 \setminus B} = f^{-1}(\PP^1 \setminus B) \to \PP^1 \setminus B$ is even a differentiable torus bundle. Hence the first homology groups  $H_1(F, \, \ZZ)$ of the fibers $F$ form a locally constant sheaf on $\PP^1 \setminus B$. This sheaf extends canonically to a sheaf $\mathcal{G}$ on $\PP^1$, the \emph{homological invariant} of $\EE$ introduced by Kodaira.

The image of the associated monodromy representation
\[
\rho \colon \pi_1(\PP^1\setminus B) \longrightarrow \Aut (H_1(F, \, \ZZ))
\]
in the automorphisms of the first homology of a fibre is orientation preserving. Upon the
choice of a basis, it becomes the monodromy group $\Gamma:=\Gamma(\EE)$, a subgroup of $\SL(2, \ZZ)$ (the group of orientation preserving automorphism of a torus), well-defined
up to conjugacy.  Thus, we get a representation called the \emph{monodromy homomorphism}
\[
\rho_{\EE}\colon \pi_1(\PP^1\setminus B) \longrightarrow \Gamma \subset \SL(2,\ZZ).
\]
For what we just said this homomorphism is defined only modulo conjugacy in $\SL(2,\ZZ)$.

The group $\Gamma(\EE)$, as we have seen, determines a group $\bar{\Gamma}( \EE) \subset \PSL(2, \ZZ)$ which we will call the \emph{modular monodromy group}. Therefore, we have associated to a Jacobian fibration a modular subgroup. We aim at classifying or at least counting the number of such modular subgroups associated to Jacobian fibrations of K3 surfaces up to conjugacy. As we shall see, this will be the decisive step to determine the possible monodromy groups of Jacobian elliptic K3 surfaces.
\medskip

We will also heavily rely on the Euler number of a K3 surface to be equal to 24.
In case of an elliptic fibration the Euler number can be calculated by looking only at the contribution of each singular fibers. The possible singular fibers and their contributions to the Euler number were classified by Kodaira and the classification can be summarized by the following table (see also \cite[Table V.6]{BHPV}). 

\begin{table}[h]
\centering
\begin{tabular}{|c|c|c|c|c|} \hline
 fibre type & 
$ADE$-type  & 
Euler number  & local monodromy & local $j$-expansion
\\ \hline
 & & & &\\[-4mm] \hline
 & & & &\\[-4mm]
 $I_0$ & -- &
$0$ & $(\begin{smallmatrix} 1 & 0 \\ 0 & 1 \end{smallmatrix})$ 
& 
\begin{tabular}{c} $j=s^{3k}$, $j=1+s^{2k}$\\ or $j\neq0,1$ \end{tabular}
\\[-4.2mm] &&&&
\\ \hline  & & & &\\[-4mm]
 $I_1$ & -- &
$1$ & $(\begin{smallmatrix} 1 & 1 \\ 0 & 1 \end{smallmatrix})$ & pole of order $1$
\\[-4.2mm] &&&&
\\ \hline & & & &\\[-4mm]
 $I_b\:\: (b\geq 2)$ &
$A_{b-1}$ & $b$ 
& $(\begin{smallmatrix} 1 & b \\ 0 & 1 \end{smallmatrix})$ & pole of order $b$
\\[-4.2mm] &&&&
\\ \hline & & & &\\[-4mm]
 $I^*_0$ &
$D_{4}$ & $6$ 
& $(\begin{smallmatrix} -1 & 0 \\ 0 & -1 \end{smallmatrix})$ & same as in first case
\\[-4.2mm] &&&&
\\ \hline & & & &\\[-4mm]
 $I^*_b \:\:  (b\geq 1)$ &
$D_{4+b}$ & $6+b$ 
& $(\begin{smallmatrix} -1 & -b \\ 0 & -1 \end{smallmatrix})$ & pole of order $b$
\\[-4.2mm] &&&&
\\ \hline & & & &\\[-4mm]
 $II$ & -- &  $2$ 
& $(\begin{smallmatrix} 1 & 1 \\ -1 & 0 \end{smallmatrix})$ & $j = s^{3k+1}$
\\[-4.2mm] &&&&
\\ \hline & & & &\\[-4mm]
 $III$ & $A\sb 1$ & $3$ 
& $(\begin{smallmatrix} 0 & 1 \\ -1 & 0 \end{smallmatrix})$ & $j = 1+s^{2k+1}$
\\[-4.2mm] &&&&
\\ \hline & & & &\\[-4mm]
 $IV$ & $A_2$ & $4$ 
& $(\begin{smallmatrix} 0 & 1 \\ -1 & -1 \end{smallmatrix})$ & $j = s^{3k+2}$
\\[-4.2mm] &&&&
\\ \hline & & & &\\[-4mm]
 $IV^*$ &
$E\sb 6$ & $8$ 
& $(\begin{smallmatrix} -1 & -1 \\ 1 & 0 \end{smallmatrix})$ & $j = s^{3k+1}$
\\[-4.2mm] &&&&
\\ \hline & & & &\\[-4mm]
 $III^*$ &
$E\sb 7$ & $9$ 
& $(\begin{smallmatrix} 0 & -1 \\ 1 & 0 \end{smallmatrix})$ & $j = 1+s^{2k+1}$
\\[-4.2mm] &&&&
\\ \hline & & & &\\[-4mm]
 $II^*$ &
$E\sb 8$ & $10$ 
& $(\begin{smallmatrix} 0 & -1 \\ 1 & 1 \end{smallmatrix})$ & $j = s^{3k+2}$
 \\[-4.2mm] &&&&
\\ \hline 
\end{tabular}
\caption{Fibre types of elliptic surfaces} \label{Table01}
\label{table:kodaira}
\end{table}


\subsection{The $j$-modular curves}\label{sec_jmod}

An essential tool for the classification of Jacobian fibrations $\mathcal{E}$ is the $j$-function associated to  $\mathcal{E}$. Let $\HH$ the upper half plane and set
\[
\bar{\HH}=\HH \cup \QQ \cup \{ \infty\}
\]
the completed upper half plane. The quotient 
\[
X(1)= \bar{\HH}/\PSL(2,\ZZ) \cong \PP^1
\]
is called the \emph{modular curve of level} $1$, namely the compactification of the $j$-line.

For a Jacobian fibration $f\colon \EE \to \PP^1$, using  the $j$-function of elliptic curves we can define a rational function
\[
\begin{split}
j(\EE) \colon \PP^1 \longrightarrow & X(1) \cong \PP^1, \\
b \mapsto & j(f^{-1}(b)).
\end{split}
\]
We work with coordinates on $X(1)$ such that the Gauss curve has image $1$ and the
Eisenstein curve has image $0$.

Following Kodaira we call $j(\EE)$ the \emph{functional invariant} of the elliptic fibration. Notice that $j(\EE)$ is a holomorphic function on $\PP^1 \setminus B$ and it is meromorphic on the whole $\PP^1$. 

Recall from the previous section that we have a monodromy homomorphism $\rho_{\EE}$ and composing it with $\xi$ we get a commutative diagram
\[
\xymatrix{
  \pi_1(\PP^1\setminus B)  \ar[dr]_{\rho'} \ar[rr]^{\rho_{\EE}} & & \SL(2,\ZZ)  \ar[dl]^{\xi}\\
  & \PSL(2,\ZZ), &
   }
\]

where $\rho'=\xi\circ \rho_\EE$.

In this situation we shall call $\rho_\EE$ a \emph{lift} of $\rho'$.
Consider a geometric loop $\gamma$ around a point $b \in B$. Then $\gamma$ determines a class of matrices in $\PSL(2, \ZZ)$.  A \emph{minimal lift} $\rho(\gamma)$ of $\rho'(\gamma)$ is defined to be a choice of a representative matrix in $\SL(2, \ZZ)$ such that $\xi(\rho(\gamma))=\rho'(\gamma)$ and $-\rho(\gamma)$ is the monodromy of a $*$-fiber.  Here the adjective minimal means that the contribution to the Euler number to the fibre over $b$ is the smallest.

Moreover, the \emph{homological invariant} sheaf $\mathcal{G}$ on $\PP^1$ associated to $\rho_{\EE}$ \emph{belongs to $j(\EE)$ in the sense of Kodaira}, if a unique Jacobian fibration $f\colon \EE \to \PP^1$ can be constructed for which the homological invariant is $\mathcal{G}$ and the functional invariant is $j(\EE)$, see e.g., \cite{Nori}.

 As said, we are interested in classifying monodromy groups for Jacobian fibrations. In the light of the last observation, we have to investigate both the functional and the homological invariant. Notice that the degree of the $j(\EE)$ is bounded. This implies a bound on the index of the monodromy group $\Gamma \subset \SL(2,\ZZ)$. Only  a finite number of possible monodromy groups $\Gamma$ and only few homological invariants can occur if we fix the image of $\Gamma$ in $\PSL(2,\ZZ)$.

Furthermore, to achieve our goal, we will make great use of the following valuable general lemma.
 
 \begin{lem}\cite[Lemma 2.3]{BT03}\label{lem_factor} Let $C$ a smooth curve. Let $j\colon C \to \PP^1$ be any non constant rational map. Then there exists a unique subgroup of finite index $\bar{\Gamma} \subset \PSL(2,\ZZ)$ such that
 \begin{enumerate}
 \item  $j$ decomposes as
\begin{equation*}\label{eq_jFactor}
j\colon C \longrightarrow \bar{\HH}/\bar{\Gamma} \longrightarrow \bar{\HH}/\PSL(2,\ZZ)\cong \PP^1.
\end{equation*}
\item for every $\bar{\Gamma}' \subset \PSL(2,\ZZ)$ with a factorization as in $(1)$ there exists an element $g\in \PSL(2,\ZZ)$ such that $g\bar{\Gamma}g^{-1} \subset \bar{\Gamma}'$.
\end{enumerate}
\end{lem}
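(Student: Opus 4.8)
The plan is to construct the subgroup $\bar\Gamma$ directly as the stabilizer-type subgroup attached to the monodromy of $j$, and then verify the two stated properties. First I would fix a basepoint $c_0 \in C$ away from the ramification locus of $j$ and away from $j^{-1}(\{0,1,\infty\})$, and let $B \subset C$ be the set of points where $j$ is ramified or takes a value in $\{0,1,\infty\}$. Restricting $j$ to $C \setminus B \to \PP^1 \setminus \{0,1,\infty\}$ gives an unramified finite cover, hence a monodromy homomorphism $\pi_1(\PP^1\setminus\{0,1,\infty\}, j(c_0)) \to S_d$ whose image I don't need; what I need is that $\pi_1(\PP^1\setminus\{0,1,\infty\})$ is (after the standard identification) isomorphic to a finite-index subgroup situation via $\bar\HH/\PSL(2,\ZZ)\cong\PP^1$. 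Concretely, the universal covering $\bar\HH \to \bar\HH/\PSL(2,\ZZ) \cong \PP^1$ realizes $\PSL(2,\ZZ)$ as the covering group, and I would set $\bar\Gamma := j_*\pi_1(C\setminus B, c_0)$ viewed inside $\PSL(2,\ZZ)$ via the monodromy action on the fiber of $\bar\HH \to \PP^1$ through a chosen lift of $j(c_0)$. Finite index follows because $j$ has finite degree. That $j$ then factors through $\bar\HH/\bar\Gamma$ is the standard covering-space correspondence: a map from $C\setminus B$ to $\PP^1\setminus\{0,1,\infty\}$ factors through the intermediate cover corresponding to the image subgroup, and one extends the factorization over $B$ by normalization (using that $C$ is smooth and the maps are proper), giving property (1).

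For property (2), suppose $\bar\Gamma'$ also fits into a factorization $j\colon C \to \bar\HH/\bar\Gamma' \to \bar\HH/\PSL(2,\ZZ)$. Over the locus where everything is unramified, the intermediate cover $\bar\HH/\bar\Gamma' \to \PP^1$ (restricted away from $\{0,1,\infty\}$ and its preimages) corresponds to a subgroup conjugate to $\bar\Gamma'$, and the existence of a lift $C\setminus B \to \bar\HH/\bar\Gamma'$ of the covering map forces, by the lifting criterion for covering spaces, the image of $\pi_1(C\setminus B)$ to land inside a conjugate of $\bar\Gamma'$; chasing basepoints shows $g\bar\Gamma g^{-1}\subset \bar\Gamma'$ for a suitable $g\in\PSL(2,\ZZ)$. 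Uniqueness of $\bar\Gamma$ up to the stated ambiguity: if two subgroups each satisfy (1) and (2), applying (2) in both directions gives mutual conjugate-containments, and a finite-index order comparison forces them to be conjugate; but in fact the construction pins down $\bar\Gamma$ exactly once the lift of $j(c_0)$ is fixed, so "unique" here means unique as a subgroup (not just up to conjugacy) — this matches the normalization that $j$ factors through $\bar\HH/\bar\Gamma$ on the nose.

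The main obstacle I anticipate is handling the branch points carefully: over $\{0,1,\infty\}$ the map $\bar\HH \to \PP^1$ is ramified (indices $2$, $3$, $\infty$ respectively, reflecting the orbifold structure of $\PSL(2,\ZZ)$), so "$j$ factors" must be interpreted in the orbifold/normalization sense, and one must check that the local ramification data of $j$ at points of $B$ is compatible — i.e. that the ramification index of $j$ at a point mapping to $0$ is a multiple of the relevant index divided by the appropriate factor, etc. The clean way around this is to phrase everything on the open parts first, get the factorization of $C\setminus B \to \PP^1\setminus\{0,1,\infty\}$ through $\bar\HH/\bar\Gamma$ minus its cusps and elliptic points, and then invoke that a finite map of smooth curves extends uniquely over the missing points by taking normalization of the target in the function field of the source; since $\bar\HH/\bar\Gamma$ is already smooth (it is a smooth projective curve) the extension is automatic. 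Property (2)'s uniqueness clause then needs only the group-theoretic squeeze, which is routine. I would cite the covering-space correspondence and Riemann's extension / normalization for curves rather than reprove them, so the write-up stays short; the cited source \cite[Lemma 2.3]{BT03} presumably does exactly this, and I would follow its structure.
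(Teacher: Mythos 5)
The paper itself gives no proof of this lemma---it is quoted with a citation to \cite[Lemma 2.3]{BT03}---so there is no internal argument to compare against. Your covering-space construction is the standard one and is essentially sound: define $\bar\Gamma$ as the image of $j_*\pi_1(C\setminus B)$ under the surjection $\pi_1(\PP^1\setminus\{0,1,\infty\})\twoheadrightarrow \ZZ/2 * \ZZ/3\cong\PSL(2,\ZZ)$ coming from the Galois covering of $\PP^1\setminus\{0,1,\infty\}$ by $\HH$ minus the orbit of the elliptic points, obtain (1) from the lifting criterion applied to the covering $j_{\bar\Gamma}^{-1}(\PP^1\setminus\{0,1,\infty\})\to\PP^1\setminus\{0,1,\infty\}$ followed by extension of the lift across the finitely many punctures, and obtain (2) from the converse direction of the lifting criterion applied to the restriction of the given factorization.

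Three points need repair. First, $C\setminus B\to\PP^1\setminus\{0,1,\infty\}$ is \emph{not} an unramified cover in general: you delete the ramification points of $j$ upstairs but not their images downstairs. This does not damage the argument (the lifting criterion only needs the target map to be a covering, and only the induced map on $\pi_1$ of the source is used), but the finite-index assertion should be routed through the honest covering obtained by also removing the branch values of $j$ from the base, and then pushing the resulting finite-index subgroup forward along the surjection of fundamental groups. Second, $\bar\HH\to\PP^1$ is not a covering, universal or otherwise; your closing paragraph correctly fixes this by working over open parts, but note that one must remove \emph{all} preimages of $\{0,1,\infty\}$ from $X(\bar\Gamma)$, not merely the cusps and elliptic points, and with the paper's normalization the orbifold indices are $3$ over $0$ and $2$ over $1$, not the other way around. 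Third---the only genuine error---your claim that $\bar\Gamma$ is unique \emph{as a subgroup} is false: for non-normal $\bar\Gamma$ every conjugate $h\bar\Gamma h^{-1}$ also satisfies (1) and (2), since $z\mapsto hz$ induces an isomorphism $\bar\HH/\bar\Gamma\cong\bar\HH/h\bar\Gamma h^{-1}$ over $X(1)$, so ``factoring on the nose'' cannot distinguish conjugates. The correct (and intended) statement is uniqueness up to conjugacy, which is precisely what your mutual-containment-plus-index argument establishes; simply drop the stronger claim.
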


Now suppose that $\EE$  has monodromy group $\Gamma \subset \SL(2,\ZZ)$,  we shall denote its image in $\PSL(2,\ZZ)$ by $\bar{\Gamma}$. We define the \emph{$j$-modular curve associated to $\bar{\Gamma}$}
\[
X(\bar{\Gamma}):=\bar{\HH}/\bar{\Gamma}.
\]

The genus of the modular curve can be calculated with the Riemann-Hurwitz formula in the following way.
Let $n$ the index of $\bar{\Gamma}$ in $\PSL(2,\ZZ)$, $e_2$ and $e_3$ the number of inequivalent elliptic fixed points of order $2$ and $3$ respectively, and let $h$ the number of the inequivalent cusps. Then the Riemann--Hurwitz formula yields
\begin{equation}\label{eq_RH2}
g\left(X(\bar{\Gamma})\right)=1+\frac{n}{12}-\frac{e_2}{4}-\frac{e_3}{3}-\frac{h}{2}.
\end{equation}

Since $B$ is rational in our case, then by L\"uroth theorem $X(\bar{\Gamma})$ is again isomorphic to $\PP^1$, hence 
\[1+\frac{n}{12}-\frac{e_2}{4}-\frac{e_3}{3}-\frac{h}{2}=0. 
\]
By Lemma \ref{lem_factor} the function $j(\EE)$ has a unique factorization 
\[
j(\EE) = j_{\bar{\Gamma}}\circ j_{\EE}
\]
up to deck transformations of $j_{\bar{\Gamma}}$, where
\[
j_{\EE}\colon \PP^1 \longrightarrow X(\bar{\Gamma})\cong\PP^1  
\]
and
\[
j_{\bar{\Gamma}}\colon X(\bar{\Gamma})\cong\PP^1 \longrightarrow X(1) \cong \PP^1
\]

This last covering is branched only on $0$, $1$ and $\infty$. The branch multiplicity is arbitrary over $\infty$, $1$ or $3$ over $0$ and $1$ or $2$ over $1$. 

The above decomposition shows that
\begin{equation}\label{eq_jDecomp}
\deg(j(\EE))=\deg(j_{\bar{\Gamma}}) \cdot \deg(j_{\EE}).
\end{equation}

We shall call the \emph{j-factor} of the surface $\EE$ the degree $\deg(j_{\EE})$. We will denote it often with the letter $l$.

This discussion tells us that in order to classify monodromies of Jacobian fibration  we have to investigate the subgroups of $\PSL(2,\ZZ)$ by means of the following three sets:
\begin{enumerate}
\item subgroups $\bar{\Gamma}$ of   $\PSL(2,\ZZ)$ of index $n$ up to conjugacy in $\PSL(2,\ZZ)$.
\item branched covers $j\colon C \to X(1)$ of degree $n$ up to equivalence of cover, branched only over $0$, $1$
and $\infty$ with multiplicity $1$ or $3$ over $0$ and  with multiplicity $1$ or $2$ over $1$.
 \item monodromy homomorphisms $\mu\colon \pi_1(\CC \setminus \{0,1\}) \longrightarrow \mathfrak{S}_n$ up to conjugacy in $\mathfrak{S}_n$, such that simple loops around $0$, respectively $1$ maps to elements of order $1$ or $3$, resp. $1$ or $2$, and the image acts transitively. 
\end{enumerate}

It is worthwhile to see that these sets are in bijection with each other in a natural way (see also \cite{HL22}). 

First of all notice that $\pi_1(\CC \setminus \{0,1\})$ is freely generated by simple geometric loops around $0$ and $1$, while $\PSL(2,\ZZ)$ is the free product $\ZZ/2 * \ZZ/3$.

\begin{description}
\item[(1)$\to$(2)]
Given $\bar{\Gamma}$, associate the branched cover 
$j_{\bar{\Gamma}}\colon X(\bar{\Gamma}) \longrightarrow X(1)$.
\item[(1)$\to$(3)]
Given $\bar{\Gamma}$, associate left multiplication 
$\PSL(2,\ZZ) \to \textrm{Perm}(\PSL(2,\ZZ)/\bar{\Gamma})\cong \mathfrak{S}_n$ on cosets
and compose with $\pi_1(\CC\setminus\{0,1\}) \to \PSL(2,\ZZ)$ to get $\mu$.
\item[(2)$\to$(3)]
Given $j:C\longrightarrow \PP^1$, restrict to $\CC\setminus\{0,1\}$, which is a topological
cover of degree $n$ and associate the representation
$\mu:\pi_1(\CC\setminus\{0,1\}) \to \mathfrak{S}_n$ by permutations of a fibre.
\item[(3)$\to$(1)]
Given $\mu$, note that by our assumptions this factors through a homomorphism $\bar{\mu}\colon \PSL(2,\ZZ)\longrightarrow  S_n$ and
associate the stabilizer subgroup $\bar{\Gamma}\subset\PSL(2,\ZZ)$ of $1$.
\item[(3)$\to$(2)]
Given $\mu$, associate the connected topological cover over $\CC\setminus\{0,1\}$
of degree $n$ that extends to a branched cover $j:C\to \PP^1$ by Riemann existence.
\end{description}

\begin{prop}\label{prop_FactJ}\cite[Lemma 6.3]{HL22} Given the holomorphic map $j(\EE)\colon \PP^1 \longrightarrow X(1)$, a factorization $j_2\circ j_1$ is equivalent to $j_{\bar{\Gamma}} \circ j_\EE$ if and only if
\begin{enumerate}
\item $j_2$ is branched only over $0,1,\infty$ with multiplicity 1,3 over 0 and 1,2 over 1. 
\item $j_1$ has no proper left factor $j'$, so that $j_2\circ j'$ has the property above.
\end{enumerate}
\end{prop}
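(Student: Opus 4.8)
The plan is to reduce everything to the universal property of Lemma \ref{lem_factor} and to the characterisation of $j_{\bar\Gamma}$ as the branched cover $\bar\HH/\bar\Gamma\to\bar\HH/\PSL(2,\ZZ)$, where $\bar\Gamma$ is the subgroup attached to $j(\EE)$ by that lemma. I would first establish the \emph{only if} direction. Suppose $j_2\circ j_1$ is equivalent to $j_{\bar\Gamma}\circ j_\EE$, i.e.\ differs from it by a deck transformation of $j_{\bar\Gamma}$. Property (1) is then immediate, since the branching of $j_{\bar\Gamma}$ over $0,1,\infty$ is exactly the branching of the natural map $\bar\HH/\bar\Gamma\to\bar\HH/\PSL(2,\ZZ)$: elliptic points of order $2$ and $3$ force multiplicities $1$ or $2$ over $1$ and $1$ or $3$ over $0$, while cusps are unrestricted over $\infty$. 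For Property (2), I would argue by contradiction: a proper left factor $j'$ of $j_1\cong j_\EE$ with $j_2\circ j'$ still satisfying (1) would yield, by the bijection (1)$\leftrightarrow$(2), a subgroup $\bar\Gamma'\subsetneq\PSL(2,\ZZ)$ through which $j(\EE)$ factors and which properly contains a conjugate of... — more precisely, $j'$ being a \emph{proper} left factor means $\deg(j_2\circ j')>\deg j_{\bar\Gamma}$, so the associated subgroup has index strictly larger than $[\PSL(2,\ZZ):\bar\Gamma]$; but then the factorisation $j(\EE)=(j_2\circ j')\circ(\text{rest})$ contradicts the \emph{maximality} (minimal index) clause of Lemma \ref{lem_factor}(2), which says the $\bar\Gamma$ it produces is, up to conjugacy, contained in every group admitting such a factorisation, hence has the largest index among them.

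Next I would prove the \emph{if} direction, which is the substantive part. Assume $j_2$ satisfies (1) and $j_1$ has no proper left factor $j'$ with $j_2\circ j'$ satisfying (1). By (1), $j_2\colon C'\to X(1)$ is branched only over $0,1,\infty$ with the prescribed multiplicities, so by the bijections (2)$\leftrightarrow$(1) it is of the form $j_{\bar\Gamma''}$ for a unique subgroup $\bar\Gamma''\subset\PSL(2,\ZZ)$, and $j(\EE)=j_2\circ j_1$ is thus a factorisation of the type in Lemma \ref{lem_factor}(1). By Lemma \ref{lem_factor}(2), there is $g\in\PSL(2,\ZZ)$ with $g\bar\Gamma g^{-1}\subset\bar\Gamma''$; translating back through the equivalences, this exhibits $j_{\bar\Gamma}$ as a left factor of $j_2$, say $j_2=j_{\bar\Gamma}\circ k$ (up to deck transformations), whence $j(\EE)=j_{\bar\Gamma}\circ(k\circ j_1)$. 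Comparing with the canonical factorisation $j(\EE)=j_{\bar\Gamma}\circ j_\EE$ and using that $j_{\bar\Gamma}$ is fixed up to deck transformations, I get $k\circ j_1\cong j_\EE$ up to deck transformations of $j_{\bar\Gamma}$. It remains to show $k$ is an isomorphism, i.e.\ $\bar\Gamma=\bar\Gamma''$ up to conjugacy. If not, $k$ has positive-dimensional... rather $k$ has degree $>1$, and then $j':=k\circ j_1$ would be a \emph{proper} left factor of $j_1$? — no: I must be careful about which side. The correct statement is that $j_1$ itself factors: from $k\circ j_1\cong j_\EE$ one does not directly get a left factor of $j_1$. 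Instead I would run the argument the other way: the hypothesis on $j_1$ says no nontrivial \emph{left} factor can be split off while keeping (1); since $j_\EE$ is by definition the one with $j_1$ having \emph{smallest} degree among all such factorisations (equivalently $j_{\bar\Gamma}$ has largest degree), and we have produced a factorisation $j(\EE)=j_2\circ j_1$ with $j_2=j_{\bar\Gamma''}$, minimality of $\deg j_1$ versus $\deg j_\EE$ combined with $\deg j_{\bar\Gamma}\le\deg j_2$ (from $g\bar\Gamma g^{-1}\subset\bar\Gamma''$) forces $\deg j_2=\deg j_{\bar\Gamma}$, hence equality of subgroups up to conjugacy and $j_2\circ j_1\cong j_{\bar\Gamma}\circ j_\EE$.

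The main obstacle, and the place to be most careful, is the bookkeeping of \emph{left} versus \emph{right} factors and the precise sense in which $j_\EE$ is ``minimal'': one must pin down that among all factorisations $j(\EE)=a\circ b$ with $a$ satisfying (1), the canonical one is characterised by $\deg a$ being \emph{maximal} (equivalently $b$ has no proper left factor that can be absorbed into $a$), and this maximality is exactly the content of the containment-up-to-conjugacy in Lemma \ref{lem_factor}(2) under the dictionary (1)$\leftrightarrow$(2) between subgroups and covers branched over $\{0,1,\infty\}$ with the stated local multiplicities. Once that correspondence is stated cleanly — larger index $\Leftrightarrow$ larger degree of $j_{\bar\Gamma}$, and ``$j_2\circ j'$ satisfies (1)'' $\Leftrightarrow$ ``the subgroup of $j'\!\circ(\text{rest})$ lies in the subgroup of $j_2$'' — both implications become formal consequences of Lemma \ref{lem_factor}, and the proof is essentially a translation, with no analytic input beyond Riemann existence, which is already invoked in the list of bijections (1)–(3) above. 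I would also remark that Proposition \ref{prop_FactJ} is stated as \cite[Lemma 6.3]{HL22}, so a short proof citing that reference plus the above sketch of the dictionary suffices.
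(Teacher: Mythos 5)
The paper itself offers no argument for this proposition --- it is quoted verbatim from \cite[Lemma 6.3]{HL22} --- so your attempt has to stand on its own, and its essential half does not. Your \emph{only if} direction is fine: property (1) is just the known branching behaviour of $j_{\bar\Gamma}$, and a proper left factor $j'$ of $j_\EE$ with $j_{\bar\Gamma}\circ j'$ still satisfying (1) would give a factorization of $j(\EE)$ through the modular curve of a subgroup of index strictly larger than $[\PSL(2,\ZZ):\bar\Gamma]$, contradicting Lemma \ref{lem_factor}(2). In the \emph{if} direction, however, you translate the group containment the wrong way round, twice. From $g\bar\Gamma g^{-1}\subset\bar\Gamma''$ one gets a covering $k\colon X(\bar\Gamma)\to X(\bar\Gamma'')$, i.e.\ $j_{\bar\Gamma}=j_2\circ k$ and $\deg j_2\le\deg j_{\bar\Gamma}$ --- not ``$j_2=j_{\bar\Gamma}\circ k$'' and not ``$\deg j_{\bar\Gamma}\le\deg j_2$''. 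With the correct direction your concluding degree count collapses: since $\deg j_2\cdot\deg j_1=\deg j_{\bar\Gamma}\cdot\deg j_\EE$, the inequalities $\deg j_2\le\deg j_{\bar\Gamma}$ and $\deg j_\EE\le\deg j_1$ are one and the same, so nothing forces equality. Worse, your final argument never invokes hypothesis (2) at all, and without (2) the statement is false (take $j_2=\mathrm{id}$ and $j_1=j(\EE)$: condition (1) holds vacuously, yet this factorization is not equivalent to the canonical one unless $\bar\Gamma=\PSL(2,\ZZ)$).

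What is missing is the step that converts (2) into the conclusion. After identifying $j_2$ with $j_{\bar\Gamma''}$, you must show the two factorizations are \emph{compatible}: there is $k\colon X(\bar\Gamma)\to X(\bar\Gamma'')$ with $j_1$ equivalent to $k\circ j_\EE$ and $j_2\circ k$ equivalent to $j_{\bar\Gamma}$. This is not a formal consequence of the conjugacy containment in Lemma \ref{lem_factor}(2), because $j_{\bar\Gamma''}$ need not be a Galois cover, so two factorizations of $j(\EE)$ through it need not differ by a deck transformation; one needs the correspondence between factorizations of the fixed map $j(\EE)$ whose left factor satisfies (1) and subgroups of $\PSL(2,\ZZ)$ containing the actual modular monodromy group $\bar\Gamma$ (the image of the point stabilizer under $\pi_1(\PP^1\setminus B)\to\PSL(2,\ZZ)$), under which containment of subgroups corresponds to refinement of factorizations. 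Once $j_1\cong k\circ j_\EE$ is in place the proof is immediate and genuinely uses (2): if $\deg k>1$ then $k$ is a proper left factor of $j_1$ and $j_2\circ k\cong j_{\bar\Gamma}$ satisfies (1), contradicting (2); hence $k$ is an isomorphism, $\bar\Gamma''$ is conjugate to $\bar\Gamma$, and the factorization is equivalent to $j_{\bar\Gamma}\circ j_\EE$. As written, your proof proposal has a genuine gap exactly at this point.
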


We can associate with $j_{\bar{\Gamma}}$ three conjugacy classes $C_0,C_1$ and $C_{\infty}$ corresponding to the branch points $0,1,\infty$. This means that we give the three conjugacy classes as a $3$-tuple of partitions of $\deg( j_{\bar{\Gamma}} )$. The degree of $j_{\bar{\Gamma}}$  and the numbers $e_3$ and $e_2$ of fixed points elements in $C_0$ and $C_1$, respectively, are related by the following congruences:
\begin{equation*}
\deg j_{\bar{\Gamma}}  \equiv_2 e_2, \qquad \deg j_{\bar{\Gamma}}  \equiv_3 e_3.
\end{equation*}
This follows since the difference of the two numbers in questions is the sum of cycle lengths of transpositions and $3$-cycles, respectively. Note that for the subgroup $\bar{\Gamma} \subset \PSL(2,\ZZ)$ corresponding to $j_{\bar{\Gamma}}$ the numbers $e_2$ and $e_3$ count the non ramified preimages of $1$ and $0$ under $j_{\bar{\Gamma}} $ which we call \emph{$2$-torsion points} and \emph{$3$-torsion points} respectively. Notice that $e_2$ and $e_3$ in the equations \eqref{eq_RH} and \eqref{eq_RH2} are the same numbers.

\section{Dessins D'Enfants and Enumeration}\label{sec_dessins}

Proceeding from the concluding observations of the previous chapter we will
arrive at a satisfactory visual classification of the modular subgroups we are
interested in.
\

Grothendieck \cite{Gr} in his esquisse d'un programme proposed the study of 
holomorphic maps branched in three points only by using solely combinatorial
information. Such maps nowadays go by the name of \emph{Belyi maps}.
This does in particular apply to the holomorphic maps $j_{\bar\Gamma}$ we
consider.

Though we are not concerned with the arithmetic implications of Grothendiecks
proposal we still find ample use of its topological and combinatorial content. 

\begin{defin}
The preimage of the line segment $[0,1]$ in $X(1)$ under a map $j_{\bar\Gamma}$ provides a graph embedded in $X(\bar\Gamma)$,
which is the \emph{dessin d'enfant} associated to $\bar\Gamma$.
\end{defin}

To distill the combinatorial information from the dessin let us now get a bit more
technical:

\begin{defin}
A finite \emph{bipartite graph} is a set $E$ of edges and two equivalence
relations
on $E$ with equivalence classes called white
and black vertices, respectively.
\end{defin}

The line segment $[0,1]$ with a black vertex at $0$ and a white vertex at $1$
realizes the unique bipartite graph with only one edge. The preimage $G_{\bar\Gamma}= j_{\bar\Gamma}^{-1}([0,1])$ 
in $X(\bar\Gamma)$ is a bipartite graph with edge set the components of
the preimage of $]0,1[$ and vertices in bijection to the preimages of $0$ and
$1$.

But to take account also of the embedding into $X(\bar\Gamma)$ we have to
keep track also of the counterclockwise cyclic order of edges at each vertex with respect to the orientation of $X(\bar\Gamma)$.

\begin{defin}
A finite \emph{hypermap} is a set $E$ of edges and two permutations
$\sigma,\alpha\in S_E$ generating a transitive subgroup. White (black) vertices of its underlying bipartite graph
are the orbits of the subgroups generate by $\sigma$ ( $\alpha$ ) respectively.
\\
Two hypermaps $E,\sigma,\alpha$ and $E',\sigma',\alpha'$ are \emph{isomorphic}, if there
exists a bijection $\psi:E\to E'$ such that $\psi\sigma=\sigma'\psi$ and 
$\psi\alpha=\alpha'\psi$.
\end{defin} 

To classify modular subgroups Millington \cite{Mil69b} introduced the notion
of \emph{pairing}, which in our terminology is a \emph{rooted} hypermap, i.e.\ a hypermap $E,\sigma,\alpha$ together with a \emph{root} $e_0\in E$ such that 
$\sigma^3=\operatorname{id}_E=\alpha^2$. He result can thus be stated as follows:

\begin{theo}\cite[Theorem 1]{Mil69b}\label{theo_Perm} There is a one-to-one correspondence between subgroups $\bar{\Gamma}$ of finite
index $n$ in $\PSL(2, \ZZ)$ and rooted hypermaps $E, \sigma, \alpha, e_0$ with $\# E=n$, $\sigma^3=\operatorname{id}_E=\alpha^2$
up to isomorphisms preserving the root. \\
$\bar{\Gamma}$ has type $(n; g, h, e_2, e_3)$ if and only if
\begin{enumerate}
\item $\alpha$ fixes $e_2$ letters of $E$,
\item $\sigma$ fixes $e_3$ letters of $E$, and
\item  $\sigma\alpha$ consists of $h$ disjoint cycles.
\end{enumerate}
\end{theo}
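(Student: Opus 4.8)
The plan is to reduce everything to the standard dictionary between finite index subgroups of a group $G$ and transitive $G$-sets with a marked point, and then specialize to $G = \PSL(2,\ZZ) \cong \ZZ/2 * \ZZ/3$. Concretely, recall that for any group $G$ and any subgroup $\bar\Gamma$ of index $n$, left multiplication makes the coset space $G/\bar\Gamma$ a transitive $G$-set of cardinality $n$ with distinguished point $e_0 = \bar\Gamma$, and the stabilizer of $e_0$ recovers $\bar\Gamma$. Conversely a transitive $G$-set $E$ with marked point $e_0$ yields the subgroup $\mathrm{Stab}(e_0)$, and the two constructions are mutually inverse up to the obvious notion of isomorphism of pointed $G$-sets. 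Two subgroups are \emph{conjugate} iff the corresponding $G$-sets are isomorphic after forgetting the marking; but since the theorem is stated for subgroups themselves (not conjugacy classes) on the left and root-preserving isomorphisms on the right, we want exactly the pointed version. First I would state this pointed-$G$-set bijection carefully as a lemma (or simply cite it as folklore), so that the only remaining work is to translate "transitive pointed $\ZZ/2 * \ZZ/3$-set of size $n$" into "rooted hypermap with $\sigma^3 = \mathrm{id} = \alpha^2$".

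Next I would carry out that translation. Using the presentation $\PSL(2,\ZZ) = \langle S, T \mid S^2 = (ST)^3 = 1\rangle$ — or rather the free product description with generators of order $2$ and $3$ — a homomorphism $\PSL(2,\ZZ) \to S_E$ is the same as a choice of two permutations: the image $\alpha$ of the order-$2$ generator (so $\alpha^2 = \mathrm{id}$) and the image $\sigma$ of the order-$3$ generator (so $\sigma^3 = \mathrm{id}$), with no further relations because of the free product. Transitivity of the action is exactly transitivity of $\langle \sigma, \alpha\rangle$ on $E$. Thus a transitive pointed $\PSL(2,\ZZ)$-set of size $n$ is precisely a rooted hypermap $(E, \sigma, \alpha, e_0)$ with $\#E = n$ and $\sigma^3 = \mathrm{id}_E = \alpha^2$, and isomorphism of pointed $G$-sets is exactly a root-preserving hypermap isomorphism $\psi$ with $\psi\sigma = \sigma'\psi$, $\psi\alpha = \alpha'\psi$. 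Composing with the bijection from the previous paragraph gives the stated one-to-one correspondence. A small point to be careful about: in the free product $\ZZ/2 * \ZZ/3$ the order-$3$ element $ST$ must be chosen compatibly with $S$ and $T$; I would simply work with the abstract generators $c_2, c_3$ of orders $2,3$, note $\PSL(2,\ZZ) = \langle c_2 \rangle * \langle c_3 \rangle$, and identify $\sigma = \bar\mu(c_3)$, $\alpha = \bar\mu(c_2)$ where $\bar\mu$ is the permutation representation on $E$.

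Finally I would verify the reading-off of the type $(n; g, h, e_2, e_3)$. The index is $n = \#E$ by construction. The number $e_2$ of inequivalent order-$2$ elliptic fixed points is the number of fixed points of the corresponding elliptic generators, which on the $G$-set side are the cosets fixed by a conjugate of $c_2$, i.e.\ the fixed letters of $\alpha$; similarly $e_3$ is the number of fixed letters of $\sigma$ — this matches the branch-multiplicity/cycle-length discussion already given in the excerpt (the non-ramified preimages of $1$ and $0$). For the cusps: a parabolic element of $\PSL(2,\ZZ)$ is conjugate to a power of $c_2 c_3$ (the image of $T$, up to conjugacy $ST\cdot S = T$ has infinite order and generates the parabolic conjugacy class), and the inequivalent cusps of $\bar\Gamma$ correspond to the $\langle c_2 c_3\rangle$-orbits on $G/\bar\Gamma$, i.e.\ to the disjoint cycles of $\sigma\alpha$ on $E$; hence their number is $h$. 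Given $n, e_2, e_3, h$, the genus $g$ is then forced by the Riemann--Hurwitz formula \eqref{eq_RH}, so there is nothing further to check for $g$.

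The only genuinely delicate step is the cusp count: one must pin down that the parabolic conjugacy class in $\PSL(2,\ZZ)$ is represented by $c_2 c_3$ (equivalently $ST$ has infinite order while $S$, $ST$ have orders $2,3$), so that "number of cusps of $\bar\Gamma$" genuinely equals "number of cycles of $\sigma\alpha$", with no contribution from other conjugacy classes of infinite-order elements; this is where I expect to spend the most care, and it is precisely the content that distinguishes Millington's theorem from the purely formal subgroup/$G$-set dictionary. Everything else is bookkeeping about free products and pointed $G$-sets.
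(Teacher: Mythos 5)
The paper contains no proof of Theorem \ref{theo_Perm}: it is imported from \cite{Mil69b}, and the only supporting text is the paragraph following the statement, which describes exactly the dictionary you set up (the action of $\PSL(2,\ZZ)$ on $E$ sending $ST\mapsto\sigma$, $S\mapsto\alpha$, with $\bar\Gamma$ recovered as the stabilizer of the root). So your route is the expected one --- in substance it is Millington's own argument --- and as an outline it is correct: the pointed transitive $G$-set dictionary combined with $\PSL(2,\ZZ)\cong\ZZ/2*\ZZ/3$ gives precisely the bijection with rooted hypermaps satisfying $\sigma^3=\operatorname{id}_E=\alpha^2$, the index is $\#E$, and $g$ is indeed forced by \eqref{eq_RH} once $n,h,e_2,e_3$ are known, so nothing separate is needed for the genus.

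What remains to be written out is exactly the step you flag, and I would do it via point stabilizers rather than via conjugacy classes of parabolic elements. For the cusps: $\PSL(2,\ZZ)$ acts transitively on $\QQ\cup\{\infty\}$ with stabilizer of $\infty$ equal to $\langle T\rangle$, so the cusps of $\bar\Gamma$ (the $\bar\Gamma$-orbits on $\QQ\cup\{\infty\}$) correspond to the double cosets in $\bar\Gamma\backslash\PSL(2,\ZZ)/\langle T\rangle$, i.e.\ to the orbits of the image of $T=S\cdot ST$ on $E$; that image is $\alpha\sigma$, which is conjugate to $\sigma\alpha$ and hence has the same number of cycles, giving $h$. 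Phrasing it this way removes any need to worry about ``other conjugacy classes of infinite-order elements''. Similarly, your one-line identification of $e_2$ and $e_3$ should be replaced by the analogous computation: the order-$2$ elliptic points of $\PSL(2,\ZZ)$ form the orbit of $i$ with stabilizer $\langle S\rangle$, a point $g\cdot i$ is an elliptic point of $\bar\Gamma$ exactly when $gSg^{-1}\in\bar\Gamma$, and this happens exactly when the double coset $\bar\Gamma g\langle S\rangle$ consists of a single coset, i.e.\ when the corresponding letter of $E$ is fixed by $\alpha$; hence $e_2$ is the number of fixed letters of $\alpha$, and the same argument at the order-$3$ fixed point gives $e_3$ for $\sigma$ (an orbit of $\langle\sigma\rangle$ has length $1$ or $3$, and length $1$ is equivalent to a nontrivial stabilizer in $\bar\Gamma$). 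With these two double-coset computations made explicit, your outline is a complete and correct proof of the cited theorem.
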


Given a hypermap  as in the theorem, there is an induced transitive action of $\PSL(2, \ZZ)$
on $E$ by mapping the torsion generators $ST$ and $S$
to the permutations
$\sigma$ and $\alpha$.
The correspondence then associates the stabilizer group $\bar\Gamma$ of the root
to a rooted diagram, Hence we obtain

\begin{cor}$($cf.\ \cite[Theorem 4.2]{Kul91}\label{classify}  
Conjugacy classes of subgroups $\bar{\Gamma}\subset\PSL(2, \ZZ)$ of
index $n$ are in bijection with hypermaps $E, \sigma, \alpha$ with $\# E=n$, $\sigma^3=\operatorname{id}_E=\alpha^2$
up to isomorphisms.
\end{cor}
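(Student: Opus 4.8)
The plan is to deduce the Corollary from Millington's Theorem~\ref{theo_Perm} by the standard bookkeeping that identifies ``forgetting the root'' on the hypermap side with ``passing to the conjugacy class'' on the subgroup side. First I would make the dictionary precise: a hypermap $(E,\sigma,\alpha)$ with $\sigma^3=\operatorname{id}_E=\alpha^2$ and $\langle\sigma,\alpha\rangle$ transitive is the same datum as a transitive action of $\PSL(2,\ZZ)$ on the finite set $E$, obtained by sending the torsion generators $ST\mapsto\sigma$ and $S\mapsto\alpha$; this is well defined because $\PSL(2,\ZZ)\cong\ZZ/3*\ZZ/2$ with $ST$, $S$ of orders $3$, $2$, exactly as recalled in Section~\ref{sec_Prliminaries}. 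Under this dictionary an isomorphism of hypermaps is precisely a $\PSL(2,\ZZ)$-equivariant bijection.

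Next I would run the orbit--stabilizer argument. Given such an action and a point $e\in E$, its stabilizer $\bar\Gamma_e\subset\PSL(2,\ZZ)$ has index $\#E=n$, and by transitivity $\bar\Gamma_{g\cdot e}=g\,\bar\Gamma_e\,g^{-1}$, so the set $\{\bar\Gamma_e:e\in E\}$ is a single conjugacy class of index-$n$ subgroups depending only on the isomorphism class of $(E,\sigma,\alpha)$. Conversely, given $\bar\Gamma$ of index $n$, the left-multiplication action on the coset set $\PSL(2,\ZZ)/\bar\Gamma$ is transitive of degree $n$ and hence is a hypermap; replacing $\bar\Gamma$ by $g\bar\Gamma g^{-1}$ yields a canonically equivariantly isomorphic coset action, so its isomorphism class depends only on the conjugacy class of $\bar\Gamma$. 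This gives the two maps between the two sets.

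Then I would check they are mutually inverse. Starting from a hypermap $(E,\sigma,\alpha)$, choose any root $e_0$, put $\bar\Gamma=\bar\Gamma_{e_0}$; the map $g\bar\Gamma\mapsto g\cdot e_0$ is an equivariant bijection $\PSL(2,\ZZ)/\bar\Gamma\to E$ (surjective by transitivity, injective by definition of stabilizer), i.e.\ an isomorphism of hypermaps, so we recover $(E,\sigma,\alpha)$ up to isomorphism. Starting from a conjugacy class, forming the coset action and taking the stabilizer of the trivial coset returns $\bar\Gamma$; this is literally Theorem~\ref{theo_Perm} with $e_0$ the trivial coset, now read modulo the choice of root. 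For injectivity of the ``conjugacy class $\mapsto$ unrooted hypermap'' map, if two subgroups $\bar\Gamma,\bar\Gamma'$ give isomorphic unrooted hypermaps, then after moving the root of one along a suitable $g\in\PSL(2,\ZZ)$ the isomorphism becomes root-preserving, and Theorem~\ref{theo_Perm} forces the corresponding subgroups — namely $g\bar\Gamma g^{-1}$ and $\bar\Gamma'$ — to coincide; hence $\bar\Gamma$ and $\bar\Gamma'$ are conjugate.

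I expect the only real obstacle to be expository rather than mathematical: one must phrase the orbit--stabilizer identification carefully enough that ``rooted hypermap isomorphism up to the choice of root'' and ``conjugation of subgroups'' are matched on the nose, using Millington's theorem for the injectivity statement rather than circularly invoking the Corollary itself. Everything else is formal once the hypermap/$\PSL(2,\ZZ)$-action dictionary of the first paragraph is in place.
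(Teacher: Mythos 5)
Your proposal is correct and follows essentially the same route as the paper: the paper also passes from Millington's rooted correspondence to the unrooted/conjugacy statement by viewing a hypermap as a transitive $\PSL(2,\ZZ)$-action via $ST\mapsto\sigma$, $S\mapsto\alpha$ and taking the stabilizer of the root, with forgetting the root matching conjugation of stabilizers. You merely spell out the orbit--stabilizer bookkeeping that the paper leaves implicit.
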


Still we are proceeding towards a concrete visualization. Let us consider a hypermap $E, \sigma, \alpha$ and a corresponding conjugacy class represented by a modular subgroup $\bar\Gamma$.

We observe, that the preimage $G_{\bar\Gamma}= j_{\bar\Gamma}^{-1}([0,1])$ coincides with the bipartite graph underlying $E, \sigma, \alpha$. Moreover it is
embedded into the Riemann surface $X(\bar\Gamma)$ with complement a union of $h$ disjoint cells.
Each of these cells is a cyclic branch cover of the complement of $[0,1]$ in $X(1)$ branched only at infinity, with degrees in bijection to the orbit lengths of $\alpha\circ\sigma$.

Conversely, to a bipartite graph on a surface we can associate the hypermap
$E, \sigma, \alpha$, where $E$ is the set of edges, $\sigma$ maps each edge $e$
to the next one in counterclockwise direction at the unique white vertex of $e$ and
$\alpha$ does the same w.r.t.\ black vertices. In particular $\sigma^3=\operatorname{id}_E=\alpha^2$ if and only if  the graph has white (black) vertices
of degrees one and three (one and two) only.

As we consider only the case that $X(\bar\Gamma)$ is of genus $0$, we conclude

\begin{cor}
Conjugacy classes of subgroups $\bar{\Gamma}\subset\PSL(2, \ZZ)$ of
index $n$ and genus $0$ are in bijection with bipartite graphs with $n$ edges and white (black) vertices
of degrees one and three only (one and two only)
embedded into the sphere up to isotopy.
\end{cor}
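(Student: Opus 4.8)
The plan is to derive the final Corollary as a direct specialization of the preceding Corollary (the one stated just after Theorem \ref{theo_Perm}) together with the topological dictionary set up in the two paragraphs immediately above the statement. The main point is that nothing new needs to be proved: the bijection between conjugacy classes of index-$n$ subgroups of $\PSL(2,\ZZ)$ and isomorphism classes of hypermaps $E,\sigma,\alpha$ with $\sigma^3=\operatorname{id}_E=\alpha^2$ is already in hand, and we only need to translate the hypermap data into the language of embedded bipartite graphs and then impose the genus-$0$ condition.

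First I would recall the forward construction: given a conjugacy class of $\bar\Gamma$ of index $n$, the map $j_{\bar\Gamma}\colon X(\bar\Gamma)\to X(1)$ is a Belyi map, and $G_{\bar\Gamma}=j_{\bar\Gamma}^{-1}([0,1])$ is a bipartite graph embedded in the Riemann surface $X(\bar\Gamma)$, with black vertices over $0$, white vertices over $1$, and edges over $]0,1[$. By the ramification restrictions on $j_{\bar\Gamma}$ (multiplicity $1$ or $3$ over $0$, multiplicity $1$ or $2$ over $1$, recorded in Section \ref{sec_jmod}), the black vertices have degree $1$ or $3$ and the white vertices have degree $1$ or $2$; the number of edges is $n=\deg j_{\bar\Gamma}$. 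Conversely, as explained right before the statement, an embedded bipartite graph with these degree constraints produces a hypermap $E,\sigma,\alpha$ with $\sigma^3=\operatorname{id}_E=\alpha^2$, where $\sigma$ (resp.\ $\alpha$) rotates edges counterclockwise about white (resp.\ black) vertices; transitivity of $\langle\sigma,\alpha\rangle$ corresponds to connectedness of the graph, which is automatic since it is the preimage of a connected segment under a connected cover, or, on the graph side, can be built into the definition. Conjugation of $\bar\Gamma$ corresponds exactly to hypermap isomorphism, which on the geometric side is an orientation-preserving homeomorphism of the ambient surface carrying one embedded graph to the other, i.e.\ isotopy of embeddings. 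This establishes a bijection between conjugacy classes of index-$n$ subgroups and isotopy classes of such embedded bipartite graphs on oriented surfaces.

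Next I would cut down to genus $0$. The surface $X(\bar\Gamma)$ has genus $g$ equal to the genus of $\bar\Gamma$, by definition of $X(\bar\Gamma)=\bar\HH/\bar\Gamma$ and the Riemann--Hurwitz formula \eqref{eq_RH2}; equivalently, in hypermap terms, $g$ is determined by $n$, $e_2$, $e_3$, $h$ via Millington's dictionary (Theorem \ref{theo_Perm}) together with \eqref{eq_RH}. Restricting the bijection of the previous paragraph to those classes with $g=0$ picks out precisely the bipartite graphs embedded in the sphere $S^2\cong\PP^1$. Conversely any bipartite graph with the stated degree constraints embedded in the sphere has, by construction, underlying hypermap of genus $0$, hence arises from a genus-$0$ subgroup. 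This gives the claimed bijection, and I would phrase the final sentence of the proof as: combine the Corollary following Theorem \ref{theo_Perm} with the equivalence ``hypermap of genus $0$'' $\Leftrightarrow$ ``embeddable in $S^2$'' and the identification of hypermap isomorphism with isotopy of embeddings.

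The only genuine subtlety — what I would treat as the ``main obstacle'', though it is really a bookkeeping point — is making sure the equivalence relations match up on the nose: on the algebraic side we mod out by conjugacy in $\PSL(2,\ZZ)$, on the combinatorial side by hypermap isomorphism (bijections $\psi$ intertwining $\sigma,\alpha$), and on the geometric side by isotopy of embedded graphs in $S^2$. The first two are identified by the Corollary already proved; the second and third are identified by the standard fact that an isomorphism of genus-$0$ hypermaps is realized by an orientation-preserving self-homeomorphism of $S^2$ (unique up to isotopy, since the mapping class group of the sphere is trivial and any two cellular embeddings with the same rotation system are equivalent), and conversely isotopic embeddings induce the same rotation system hence isomorphic hypermaps. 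I would state this realization explicitly, cite it as classical (it is the same principle underlying the hypermap--Belyi correspondence used throughout), and note that the ``up to isotopy'' clause in the statement is exactly the translation of ``up to hypermap isomorphism'' into the sphere picture. With that in place the Corollary follows with no further computation.
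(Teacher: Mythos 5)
Your argument is correct and is essentially the paper's own proof: it combines the corollary following Theorem \ref{theo_Perm} (conjugacy classes $\leftrightarrow$ hypermaps with $\sigma^3=\operatorname{id}_E=\alpha^2$) with the rotation-system dictionary between such hypermaps and embedded bipartite graphs with the stated degree constraints, and then restricts to genus $0$ so that the ambient surface is the sphere; your extra care in matching the three equivalence relations (conjugacy, hypermap isomorphism, isotopy of embeddings in $S^2$) only makes explicit what the paper leaves implicit. One cosmetic caveat: your degree assignment (black vertices of degree $1$ or $3$ over $0$, white of degree $1$ or $2$ over $1$) is the colour swap of the statement being proved (white vertices of degree $1$ or $3$), an immaterial relabelling that the paper itself commits between its definition of the dessin and its examples.
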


Using stereographic projection from any point of a face we get planar representative of
any graph on a sphere, from which we can recover the original graph and thus the hypermap $E, \sigma, \alpha$. Note that projections from different cells yield planar
graphs which in general are non isotopic. 

Hence we declare two planar graphs to be equivalent if they are stereographic projections of the same graph on the sphere up to isotopy. In particular each equivalence
class of finite planar graphs contains at most as many planar isotopy classes as it contains faces. To get from one to another, one hast to make a different face to become
the unbounded face.

So finally we classified modular subgroups by equivalence classes of suitable planar
graphs.

\bigskip

For the remainder of the chapter let us focus on torsion free modular subgroups,
hence the case $e_2=0=e_3$. In this case the corresponding planar bipartite graph has no univalent vertices and is called \emph{(white) trivalent} despite the fact that all black vertices are bivalent.

\medskip

There is no torsion free modular subgroup of index $n$, if $n$ is not
divisible by $6$. In case $6 \mid n$ recursive formulae are given by \cite[Theorem 4.2, 4.3]{SB21}
for the number of torsion free modular subgroups of index $n$ and the number of their conjugacy classes.
They are used to compute the initial members of the sequence of such number, which we reproduce
here for index at most $24$:

\begin{table}[!h]
\begin{tabular}{|c||c|c|c|c|}\hline
 $n$&$6$&$12$&$18$&$24$\\
  \hline
 $\sharp$&$4$&$32$&$336$&$4096$ \\
 
$\sharp$ up to conj.&$2$&$6$&$26$&$191$ \\
\hline
\end{tabular}
\caption{} \label{theo_TFMod}\label{Table0}
\end{table}

\begin{rem}\label{rem_numbdes} In the examples \ref{ex_index6}, \ref{ex_index12} and \ref{ex_index18} below we find graphs associated to the conjugacy classes of torsion free modular subgroups  $\bar{\Gamma}$ of index 6, 12 and 18. A  list and the pictures of the 191 graphs associated to the index 24 torsion free modular subgroups can be found in \cite{HHP21}. 
\end{rem}


\begin{exam}\label{ex_index6} Let us consider modular subgroups $\bar{\Gamma}$,  torsion-free, of index $6=6\cdot k $ with $k=1$ and genus $0$. So we have to construct a corresponding planar bipartite graph with $2=6/3$ white vertices. We know that $\bar{\Gamma}$ has $h = 3=k+2$ parabolic generators,  therefore by Theorem \ref{theo_Perm} its graph has $3$ faces, finally, the number of black vertex is $3=6/2$. Moreover, by Theorem \ref{theo_Perm} (3) there is a partition of $n=6$ into $h=3$ positive integers attached to the conjugacy class of $\sigma\alpha$. We have the following possibilities $[4,1,1]$, $[3,2,1]$ or $[2,2,2]$. Here we use the following interpretation: the parts correspond bijectively to faces such that the size of the part is half the number of edges adjacent to the face (an edge is counted twice for the face if that is adjacent on both sides). We check that  $[3,2,1]$ gives not rise to a trivalent graph. Therefore we have only two partitions and indeed only the following graphs up to equivalence.

\begin{center}
\begin{tikzpicture}[-latex, node distance = 2cm]
\node[circle,draw, scale=0.4](o){};
\node[circle, draw, fill=black, scale=0.4](o1)[left of= o]{};
\node[circle, draw, fill=black, scale=0.4](a)[right of= o]{};
\node[circle, draw, scale=0.4](b)[right of= a]{};
\node[circle, draw, fill=black, scale=0.4](c)[right of= b]{};
\path[-](a) edge (b);
\path[-](o) edge (a);
\path [-](o1) edge [bend left =55](o);
\path [-](o1) edge [bend right =55](o);
\path [-](b) edge [bend left =55](c);
\path [-](b) edge [bend right =55](c);

\node[circle](o3)[right of= c]{};{};
\node[circle,draw, scale=0.4](o2)[right of= o3]{};{};
\node[circle, draw, fill=black, scale=0.4](a0)[right of= o2]{};
\node[circle, draw, fill=black, scale=0.4](a1)[above of= a0]{};
\node[circle, draw, fill=black, scale=0.4](a2)[below of= a0]{};
\node[circle, draw, scale=0.4](b0)[right of= a0]{};
\path[-](a0) edge (b0);
\path[-](o2) edge (a0);
\path [-](a1) edge [bend right =45](o2);
\path [-](a1) edge [bend left =45](b0);
\path [-](a2) edge [bend left =45](o2);
\path [-](a2) edge [bend right =45](b0);
\end{tikzpicture}
\end{center}
\begin{center}
$[4,1,1], \quad \ZZ/2\ZZ$  \quad \quad \quad \quad \quad \quad \quad \quad \quad        $[2,2,2]$, \quad $\mathfrak{S}_3$
\end{center}

Notice that in the graph on the right the bounded faces are adjacent to two edges,
while the unbounded face has eight adjacent edges, four curved edges and two
straight edges, each counted twice.

Finally notice that we label each graph with the corresponding partition and with its symmetry group as a graph on a sphere if it is non-trivial.
\end{exam}

\begin{exam}\label{ex_index12} Let us consider modular subgroups $\bar{\Gamma}$,  torsion-free, of index $12=6\cdot k $ with $k=2$ and genus $0$. So the graphs now have $4=12/3$ white vertices and $6=12/2$ black vertices. $h=4=k+2$ is the number of parabolic generators,  therefore by Theorem \ref{theo_Perm} the graphs have four faces. By Theorem \ref{theo_Perm} (3) the faces have to represent partition of $12$ into four positive integers. By Beauville \cite{B81} one sees that only the cases $[9,1,1,1],[8,2,1], [6,3,2,1], [5,5,1,1], [4,4,2,2]$ and $[3,3,3,3]$ give rise to a trivalent planar graph. Therefore we have only the following six planar graphs.

\begin{center}
\begin{tikzpicture}[-latex, node distance = 2cm]

\node[circle](o){};{};
\node[circle](o1)[left of= o]{};
\node[circle,draw, scale=0.4](w1)[above of= o1]{};
\node[circle,draw, scale=0.4](w2)[below of= o1]{};
\node[circle, draw, fill=black, scale=0.4](b1)[right of= w1]{};
\node[circle, draw, fill=black, scale=0.4](b2)[right of= w2]{};
\node[circle, scale=0.3](b3)[left of= o1]{};
\node[circle, draw, fill=black, scale=0.4](b6)[above  of= b3]{};
\node[circle, draw, fill=black, scale=0.4](b7)[below  of= b3]{};
\node[circle,draw, scale=0.4](w3)[left of= b3]{};
\node[circle, draw, fill=black, scale=0.4](b4)[left of= w3]{};
\node[circle,draw, scale=0.4](w4)[left of= b4]{};
\node[circle, draw, fill=black, scale=0.4](b5)[left of= w4]{};
\path [-](w1) edge [bend left =55](b1);
\path [-](w1) edge [bend right =55](b1);
\path [-](w2) edge [bend left =55](b2);
\path [-](w2) edge [bend right =55](b2);
\path [-](w4) edge [bend left =55](b5);
\path [-](w4) edge [bend right =55](b5);
\path[-](w4) edge (b4);
\path[-](b4) edge (w3);
\path[-](w1) edge (b6);
\path[-](w2) edge (b7);
\path [-](w3) edge [bend left =35](b6);
\path [-](w3) edge [bend right =35](b7);
\node[circle](o2)[right of= o]{};
\node[circle, draw, fill=black, scale=0.4](b7)[right  of= o2]{};
\node[circle,draw, scale=0.4](w5)[right of= b7]{};
\node[circle, draw, fill=black, scale=0.4](b8)[right  of= w5]{};
\node[circle,draw, scale=0.4](w6)[right of= b8]{};
\node[circle,  scale=0.4](o3)[right of= w6]{};
\node[circle, draw, fill=black, scale=0.4](b11)[above  of= o3]{};
\node[circle, draw, fill=black, scale=0.4](b12)[below  of= o3]{};
\node[circle,draw, scale=0.4](w7)[right of= o3]{};
\node[circle, draw, fill=black, scale=0.4](b9)[right  of= w7]{};
\node[circle,draw, scale=0.4](w8)[right of= b9]{};
\node[circle, draw, fill=black, scale=0.4](b10)[right  of= w8]{};
\path[-](w5) edge (b8);
\path[-](b8) edge (w6);
\path[-](w7) edge (b9);
\path[-](b9) edge (w8);
\path [-](w8) edge [bend left =50](b10);
\path [-](w8) edge [bend right =50](b10);
\path [-](w5) edge [bend left =50](b7);
\path [-](w5) edge [bend right =40](b7);
\path [-](w6) edge [bend left =40](b11);
\path [-](w6) edge [bend right =40](b12);
\path [-](w7) edge [bend left =40](b12);
\path [-](w7) edge [bend right =40](b11);

\end{tikzpicture}
\end{center}
\begin{center}
$[9,1,1,1], \quad \ZZ/3  \quad \quad \quad\quad\quad \quad \quad \quad \quad \quad \quad\quad \quad \quad \quad$        $[8,2,1,1], \quad \ZZ/2$
\end{center}

\medskip

\begin{center}
\begin{tikzpicture}[-latex, node distance = 2cm]

\node[circle](o){};{};
\node[circle](o1)[left of= o]{};
\node[circle, draw, fill=black, scale=0.4](b1)[left  of= o1]{};
\node[circle, scale=0.4](o2)[left of= b1]{};
\node[circle,draw, scale=0.4](w1)[above of= o2]{};
\node[circle,draw, scale=0.4](w2)[below of= o2]{};
\node[circle, draw, fill=black, scale=0.4](b2)[left  of= o2]{};
\node[circle, scale=0.4](o3)[left of= b2]{};
\node[circle, draw, fill=black, scale=0.4](b3)[above  of= o3]{};
\node[circle, draw, fill=black, scale=0.4](b4)[below  of= o3]{};
\node[circle,draw, scale=0.4](w3)[left of= o3]{};
\node[circle, draw, fill=black, scale=0.4](b5)[left  of= w3]{};
\node[circle,draw, scale=0.4](w4)[left of= b5]{};
\node[circle, draw, fill=black, scale=0.4](b6)[left  of= w4]{};
\path[-](w1) edge (b3);
\path[-](w2) edge (b4);
\path[-](w3) edge (b5);
\path[-](w4) edge (b5);
\path [-](w1) edge [bend left =50](b1);
\path [-](w1) edge [bend right =50](b2);
\path [-](w2) edge [bend right =50](b1);
\path [-](w2) edge [bend left =50](b2);
\path [-](w3) edge [bend left =50](b3);
\path [-](w3) edge [bend right =50](b4);
\path [-](w4) edge [bend left =50](b6);
\path [-](w4) edge [bend right =50](b6);
\node[circle, scale=0.1](o4)[right of= o]{};
\node[circle, draw, fill=black, scale=0.4](bb1)[right  of= o4]{};
\node[circle,draw, scale=0.4](ww1)[right of= bb1]{};
\node[circle, draw, fill=black, scale=0.4](bb2)[right  of= ww1]{};
\node[circle,draw, scale=0.4](ww2)[right of= bb2]{};
\path [-](ww1) edge [bend left =50](bb1);
\path [-](ww1) edge [bend right =50](bb1);
\path[-](ww1) edge (bb2);
\path[-](ww2) edge (bb2);
\node[circle, draw, fill=black, scale=0.4](o5)[right of= ww2]{};
\node[circle,draw, scale=0.4](ww3)[right of= o5]{};
\node[circle, draw, fill=black, scale=0.4](bb3)[above  of= ww3]{};
\node[circle, draw, fill=black, scale=0.4](bb4)[below  of=ww3]{};
\node[circle, draw, fill=black, scale=0.4](bb5)[right  of= ww3]{};
\node[circle,draw, scale=0.4](ww4)[right of= bb5]{};
\path [-](ww3) edge [bend left =50](o5);
\path [-](ww3) edge [bend right =50](o5);
\path [-](ww2) edge [bend left =30](bb3);
\path [-](ww2) edge [bend right =30](bb4);
\path[-](ww3) edge (bb5);
\path [-](ww4) edge [bend right =20](bb3);
\path [-](ww4) edge [bend left =20](bb4);
\path[-](ww4) edge (bb5);
\end{tikzpicture}
\end{center}
\begin{center}
$[6,3,2,1] \quad \quad \quad \quad \quad \quad\quad\quad \quad \quad \quad \quad \quad \quad\quad \quad$        $[5,5,1,1], \quad \ZZ/2$
\end{center}

\medskip 

\begin{center}
\begin{tikzpicture}[-latex, node distance = 2cm]
\node[circle](o){};{};
\node[circle](o1)[left of= o]{};
\node[circle, draw, fill=black, scale=0.4](b1)[left  of= o1]{};
\node[circle,  scale=0.4](o3)[left of= b1]{};
\node[circle,draw, scale=0.4](w1)[above of= o3]{};
\node[circle,draw, scale=0.4](w2)[below of= o3]{};
\node[circle, draw, fill=black, scale=0.4](b2)[left  of= o3]{};
\node[circle,  scale=0.4](o4)[left of= b2]{};
\node[circle, draw, fill=black, scale=0.4](b5)[above  of= o4]{};
\node[circle, draw, fill=black, scale=0.4](b6)[below  of= o4]{};
\node[circle, draw, fill=black, scale=0.4](b3)[left  of= o4]{};
\node[circle,  scale=0.4](o5)[left of= b3]{};
\node[circle,draw, scale=0.4](w3)[above of= o5]{};
\node[circle,draw, scale=0.4](w4)[below of= o5]{};
\node[circle, draw, fill=black, scale=0.4](b4)[left  of= o5]{};
\path[-](w1) edge (b5);
\path[-](w3) edge (b5);
\path[-](w2) edge (b6);
\path[-](w4) edge (b6);
\path [-](w1) edge [bend left =50](b1);
\path [-](w1) edge [bend right =50](b2);
\path [-](w2) edge [bend left =50](b2);
\path [-](w2) edge [bend right =50](b1);
\path [-](w3) edge [bend left =50](b3);
\path [-](w3) edge [bend right =50](b4);
\path [-](w4) edge [bend left =50](b4);
\path [-](w4) edge [bend right =50](b3);
\node[circle, scale=0.1](o2)[right of= o]{};
\node[circle,  scale=0.4](v2)[right of= o2]{};
\node[circle, draw, scale=0.4](ww1)[above  of= o2]{};
\node[circle, draw, scale=0.4](ww2)[right  of= v2]{};
\node[circle,  fill=black, scale=0.4](v4)[below of= ww2]{};
\node[circle,   scale=0.4](v3)[right of= ww2]{};
\node[circle,  fill=black, scale=0.4](v5)[left of= v4]{};
\node[circle,  fill=black, scale=0.4](v7)[right of= v4]{};
\node[circle,  scale=0.20](h)[above of= v7]{};
\node[circle,   fill=black, scale=0.4](v8)[above of= h]{};
\node[circle,  scale=0.20](h1)[above of= v5]{};
\node[circle,   fill=black, scale=0.4](v9)[above of= h1]{};
\node[circle, draw, scale=0.4](ww4)[below of= v4]{};
\node[circle, scale=0.3](o5)[right of= v3]{};
\node[circle, draw, scale=0.4](ww5)[above of= o5]{};
\node[circle, scale=0.1](o6)[above of= ww2]{};
\node[circle,  fill=black, scale=0.4](v6)[above of= o6]{};
\path[-](ww2) edge (v4);
\path[-](ww2) edge (v8);
\path[-](ww5) edge (v8);
\path[-](ww1) edge (v9);
\path[-](ww2) edge (v9);
\path[-](ww4) edge (v4);
\path[-](ww5) edge[bend right =05] (v6);
\path [-](ww4) edge [bend left =10](v5);
\path [-](ww1) edge [bend left =05](v6);
\path [-](ww1) edge [bend right =10](v5);
\path [-](ww4) edge [bend right =10](v7);
\path [-](ww5) edge [bend left =10](v7);
\path [-](w4) edge [bend right =50](b3);
\end{tikzpicture}
\end{center}
\begin{center}
$[4,4,2,2], \quad \ZZ/2 \times \ZZ/2   \quad \quad \quad \quad \quad \quad \quad \quad\quad \quad \quad \quad$        $[3,3,3,3], \quad A_4$
\end{center}

\end{exam}

\begin{exam} \label{ex_index18} Let us consider modular subgroups $\bar{\Gamma}$,  torsion-free, of index $18=6\cdot k $ with $k=3$ and genus $0$. So the graphs now have $6=18/3$ white vertices and $9=18/2$ black vertices, $h=5=k+2$ is the number of  faces. By Theorem \ref{theo_Perm} (3) the faces have to represent partition of $18$ into five positive integers. In \cite{} planar graphs representing the $26$ equivalence classes are given. We reproduce the graphs except in cases where we chose a different unbounded region to highlight the symmetries. We do not draw the black vertices here, it is tacitly understood that on each edge between white vertices there is always a black vertex. 

\begin{center}

\end{center}
\begin{center}
$[5,5,3,3,2] , \quad \ZZ/2 \quad   \quad \quad \quad \quad\quad \quad \quad \quad$        $[4,4,4,3,3] , \quad S_3$
\end{center}
\end{exam}

Notice that two graphs have the same associated partition $[7,7,2,1,1]$ but represent
distinct equivalence classes of graphs.



\section{Expressions for the Euler number}\label{sec_Euler}

The results collected so far are sufficient to derive a first characterization of the modular monodromy groups $\bar{\Gamma}$
which can occur for the elliptic surfaces $\EE$ we consider. The strategy we want to pursue to classify the monodromy groups $\Gamma \subset \SL(2,\ZZ)$ is the following: First describe all possible groups $\bar{\Gamma} \subset \PSL(2,\ZZ)$. In order to classify the former we consider the corresponding modular curves $X(\bar{\Gamma})$ and the covering of $\PP^1$ with the properties described in Proposition \ref{prop_FactJ}. 

Following \cite{BT03}, we call $S_{\Gamma}$ a \emph{$j$-modular elliptic surface} associate to $\bar{\Gamma} \subset \PSL(2,\ZZ)$  over the modular curve $X(\bar{\Gamma})$ if $j(S_{\Gamma})=j_{\bar{\Gamma}}$.
We have the following diagram.
\[
\xymatrix{
\EE  \ar[d]  & S_{\Gamma} \ar[d] & 
\\ 
\PP^1 \ar[r]^{l:1} \ar@/^-2.0pc/@[black][rr]_{j_{\EE}} & X(\bar{\Gamma}) \ar[r]^{j_{\bar{\Gamma}}}& \PP^1.
}
\]

This last diagram induces a natural map $\pi_1\big(\PP^1\setminus j(\EE)^{-1}\{0,1,\infty\} \big) \to \pi_1\big(X(\bar{\Gamma})\setminus j_{\bar{\Gamma}}^{-1}\{0,1,\infty\}\big)$ and  a commutative diagram of monodromy homomorphisms
\[
\xymatrix{
\pi_1\big(\PP^1\setminus B \big)  \ar[d]   \ar[r]^{} & \Gamma \ar[d]
\\ 
\pi_1\big(X(\bar{\Gamma})\setminus j_{\bar{\Gamma}}^{-1}\{0,1,\infty\}\big) \ar[r]^{} & \bar{\Gamma} .
}
\]

We can compare the lifting of the elliptic fibration $S_{\Gamma}$ to $\PP^1$ with $\EE$. As in \cite{BT03} there are two cases:

\begin{enumerate}
\item The $1:1$ case. In this case $\Gamma \cong \bar{\Gamma}$ and $\EE$ is (fiberwise birationally) isomorphic to one of the $j$-modular surfaces corresponding to a section $\bar{\Gamma} \to \SL(2,\ZZ)$.
\item The general case. In this case $\EE$ is (fiberwise birationally)  obtained as a composition of a pullback of a corresponding $S_{\Gamma}$ to $\PP^1$ followed by an even number of twists \cite[Section 5.9.1]{SS}. 

\end{enumerate}

Let $j_{\bar{\Gamma}}\colon X(\bar{\Gamma}) \to \PP^1$ be a holomorphic map. Moreover, let $B_{\bar{\Gamma}}=j_{\bar{\Gamma}}^{-1}(\{0,1,\infty\})$ and then $\rho'\colon \pi_1(\PP^1 \setminus B_{\bar{\Gamma}}) \to \PSL(2,\ZZ)$ be the associated monodromy. Then using the notation of the Section \ref{sec_jmod} we have by \cite[Proposition 2.3]{Nori} there always exists a lift $\rho$ of $\rho'$ to $\SL(2,\ZZ)$. Notice that it is not unique. Indeed, we will have examples where we encounter different lifts.

However, if we choose free generators of $ \pi_1(\PP^1 \setminus B_{\bar{\Gamma}})$ corresponding to loops around all but one of the elements in $B_{\bar{\Gamma}}$, there is a preferred lift $\rho$. Indeed, $\rho'$ assigns  a matrix class to each of the loops of the chosen generators and $\rho'$ is obtained  assigning the matrix corresponding to smallest Euler number in Table \ref{Table01}. 

Note that the matrix assigned to the loop around the remaining element in $B_{\bar{\Gamma}}$ is then determined and it may be the matrix with smaller or with bigger Euler number.  Since matrices with bigger Euler number correspond to a $*$-fibre we get the following lemma. 

\begin{lem}\label{lem_homInvStarFib} Given a functional invariant there exists a compatible homological invariant with either no $*$-fibers or 1 $*$-fibre. 
\end{lem}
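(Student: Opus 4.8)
The plan is to unwind what a "compatible homological invariant" means in terms of the minimal-lift discussion just given, and then to do a parity count. Fix the functional invariant $j(\EE)\colon \PP^1\to X(1)$ and let $B=j(\EE)^{-1}\{0,1,\infty\}$. For each $b\in B$ the class $\rho'(\gamma_b)\in\PSL(2,\ZZ)$ is prescribed by $j(\EE)$, and by Table~\ref{Table01} there is a well-defined "minimal" matrix $M_b\in\SL(2,\ZZ)$ with $\xi(M_b)=\rho'(\gamma_b)$ realising the smallest Euler-number contribution; the only other lift of $\rho'(\gamma_b)$ is $-M_b$, which (when it differs, i.e.\ when $\rho'(\gamma_b)$ is not the class of $\pm I$, where there is genuinely a $*$/non-$*$ dichotomy) corresponds to a $*$-fibre. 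A homological invariant compatible with $j(\EE)$ is exactly a lift $\rho$ of $\rho'$, i.e.\ a choice of sign $\varepsilon_b\in\{\pm1\}$ for each $b$ with $\rho(\gamma_b)=\varepsilon_b M_b$, subject to the single global constraint that the product of the $\rho(\gamma_b)$ over a standard generating set of $\pi_1(\PP^1\setminus B)$ equals the identity in $\SL(2,\ZZ)$.

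The key step is to use the standard surface-group relation $\prod_{b\in B}\gamma_b=1$ in $\pi_1(\PP^1\setminus B)$ (for a genus-$0$ base). Since $\prod_b \rho'(\gamma_b)=1$ already holds in $\PSL(2,\ZZ)$, any lift automatically satisfies $\prod_b \rho(\gamma_b)=\pm I$ in $\SL(2,\ZZ)$; changing one sign $\varepsilon_b$ flips this global sign. Hence, starting from the preferred lift of Lemma~\ref{lem_homInvStarFib}'s preceding paragraph (minimal on all loops but one, so at most one $*$-fibre among those), the remaining loop carries either $M_b$ or $-M_b$: if $M_b$, we already have a compatible homological invariant with at most one $*$-fibre and we are done; if $-M_b$, then it is that last fibre which is a $*$-fibre, and again exactly one $*$-fibre occurs. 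In either case the parity of the number of sign changes relative to "all minimal" is fixed by whether $\prod_b M_b$ equals $I$ or $-I$, and one can always absorb that discrepancy into a single loop, producing a compatible homological invariant with zero or one $*$-fibre according to that parity.

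I would then note that the genus-$0$ hypothesis on $X(\bar\Gamma)$ — hence on the relevant base $\PP^1$ — is what guarantees the product relation has no commutator terms to worry about, so the sign bookkeeping is clean; in particular the "either/or" in the statement simply records the value of $\prod_{b} M_b\in\{\pm I\}$. The main obstacle is the bookkeeping around loops $\gamma_b$ whose image $\rho'(\gamma_b)$ is the class of $-I$ in $\PSL(2,\ZZ)$, i.e.\ potential $I_0^*$ versus $I_0$ fibres: here $M_b$ and $-M_b$ are genuinely both "lifts of the same class", and one must check that the definition of minimal lift (smallest Euler contribution, namely $I_0$ with matrix $I$ rather than $I_0^*$ with matrix $-I$) is consistent with the sign-flipping argument and that at most one such exceptional loop needs to be switched. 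Once that is verified, the parity argument closes the proof.
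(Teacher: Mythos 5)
Your argument is correct and is essentially the paper's own proof: the paper also fixes minimal lifts on the free generators given by all loops but one, observes that the matrix on the remaining loop is then forced and is either the minimal one or its negative, and reads off at most one $*$-fibre; your sign/parity bookkeeping (including the $I_0$ versus $I_0^*$ case) is just a repackaging of that same step.
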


We need a precise formula for the Euler number of $\EE$ for the calculation we are going to do.  Denote as before $l=\deg j_{\EE}$ and consider the 
following diagram
\[
\xymatrix{
\EE  \ar[d]^f  & & 
\\ 
\PP^1 \ar[r]^{l:1} \ar@/^-2.0pc/@[black][rr]_{j(\EE)} &  X(\bar{\Gamma}) \cong \PP^1 \ar[r]^{j_{\bar{\Gamma}}}& X(1) \cong \PP^1. 
}
\]

By Kodaira theory on elliptic fibration  we can calculate the Euler number of $\EE$ only knowing the Euler number of the  singular fibers of the elliptic fibration. Let us denote by $B \subset \PP^1$ the set of points $b$ such that $f^{-1} (b)=F_b$ is a singular fibre then we have
\[
e(\EE) = \sum_{b \in B} e(F_b).
\]
Using the definition of $j(\EE)$ we can decompose $B$ as the disjoint union of $B^\infty=j(\EE)^{-1}(\infty)$ and its complement $B^{tor}$, thus
\[
e(\EE) = \sum_{b \in B^\infty}e(F_b)+ \sum_{b \in B^{tor}}e(F_b).
\]
The possible singular fibers can be $*$-fibers or not. Using Table \ref{table:kodaira} we see that if $F^*_b$ is a $*$-fibre then $e(F^*_b)=6+e(F_b)$ where $F_b$ is the same fibre without the $*$ -- if different from $II^*$ and $IV^*$ which are exchanged when we omit the $*$ -- . Therefore we can collect all the $*$-fibers together. This yields
\[
e(\EE) = 6\cdot \sharp\left\{ *\textrm{-fibrers} \right\}+ \sum_{b \in B^\infty}e(F_b)+ \sum_{b \in B^{tor}}e(F_b)
\]
where now in the summands there are no more $*$-fibers. Using the decomposition of $j(\EE)= j_{\bar{\Gamma}}\circ j_{\EE}$ and recalling that  $\deg(j_{\bar{\Gamma}})=[\PSL(2,\ZZ): \bar{\Gamma}],\deg( j_{\EE})=l$, we deduce that
\begin{equation}\label{eq_EulerMore}
e(\EE)=6\cdot \sharp\left\{ *\textrm{-fibrers} \right\}+l \cdot \textrm{index }(\bar{\Gamma})+\sum_{b \in B^{tor} }e(F_b).
\end{equation}
To understand the last summand, let us analyze the set  $B^{tor}$. Every element $b \in B^{tor}$  is mapped by $j(\EE)$ to $0$ or $1$. Suppose first that $b \mapsto 1$ then by the factorization of $j(\EE)$
\[
b \mapsto j_{\EE}(b)=:p \mapsto 1.
\]
There are two alternatives for $p$: it is either a $2$-torsion point or it is not. Suppose first $p$ is a $2$-torsion point, then the multiplicity of $j(\EE)$ at $b$ is one more than the ramification $r$ of $j_{\EE}$ at $b$. So locally we have the following $j(\EE)$-expansion, which determines the Euler number of the fibre according to Table \ref{table:kodaira}
\[
b \mapsto 1+b^{r+1} \Rightarrow \begin{cases} e(F_b)=3 & \textrm{if } r+1\equiv 1 \mod 2; \\
 e(F_b)= 0 & \textrm{if } r+1\equiv 0 \mod 2. \\

\end{cases}
\]
The Euler number contribution at the point $b$ can thus be expressed by  
\[
6 \left\{\frac{r+1}{2} \right\} := 6 \left( \frac{r+1}{2} - \left[ \frac{r+1}{2} \right] \right),
\]
the brackets $\{\}$ extracting the fractional part of a rational number.
 
If $p$ is not $2$-torsion then $j_{\bar{\Gamma}}$ is simply ramified at $p$ and thus of even multiplicity. Hence the multiplicity of $j(\EE)$ is also even. This yields $e(F_b)=0$.

Secondly,  suppose  that $b \mapsto 0$ then by factorization of $j(\EE)$
\[
b \mapsto j_{\EE}(b)=p \mapsto 0.
\]
There are two alternatives for $p$: it is either a $3$-torsion point or it is not. Suppose first $p$ is a $3$-torsion point and the ramification is $t$ at $b$, then the multiplicity of $j_{\EE}$ at $b$ is $t+1$.  Again we get a local expansion of 
$j(\EE)$ and the Euler number of the fibre according to Table \ref{table:kodaira}
\[
b \mapsto b^{t+1} \Rightarrow \begin{cases} e(F_b)=4 & \textrm{if } t+1\equiv 2 \mod 3; \\
e(F_b)= 2 & \textrm{if } t+1\equiv 1 \mod 3; \\
e(F_b)= 0 & \textrm{if } t+1\equiv 0 \mod 3; \\
\end{cases}
\]
Hence the Euler number contribution is $6\cdot \left\{\frac{t+1}{3} \right\}$.

If $p$ is not $3$-torsion then as before we get a trivial contribution $e(F_b)=0$.

We have therefore proven the following proposition.
\begin{prop}\label{prop_EulerEq}

For a Jacobian fibration $\EE \to \PP^1$, we have
\begin{equation}\label{eq_Euler1}
e(\EE)=6\cdot \sharp\left\{ *\textrm{-fibres} \right\}+l \cdot \textrm{index }(\bar{\Gamma})+ 6 \left(\sum_{i\in T_2}\left\{\frac{r_i+1}{2} \right\}+\sum_{j\in T_3}\left\{\frac{t_j+1}{3} \right\} \right).
\end{equation}
where $T_2$ is the set of points mapping to $2$-torsion points and $r_i, i\in T_2$ the
corresponding ramification indices, and $T_3$, $t_j, j\in T_3$ similarly for $3$-torsion points.
\end{prop}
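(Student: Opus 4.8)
The statement is essentially a bookkeeping consolidation of the case analysis already carried out in the text preceding it, so the plan is to assemble the pieces rather than prove anything genuinely new. I would start from equation \eqref{eq_EulerMore}, which already isolates the three contributions: the $6\cdot\sharp\{*\text{-fibres}\}$ term, the term $l\cdot\mathrm{index}(\bar\Gamma)$ coming from the fibres over $B^\infty=j(\EE)^{-1}(\infty)$ (whose total Euler number is exactly $\deg j(\EE)=\deg(j_{\bar\Gamma})\cdot\deg(j_\EE)=\mathrm{index}(\bar\Gamma)\cdot l$, using Table \ref{table:kodaira} which shows that over $\infty$ the Euler contribution of a non-$*$ fibre equals the pole order), and the residual sum $\sum_{b\in B^{tor}}e(F_b)$ over the preimages of $0$ and $1$. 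The only real work is to show that this last sum equals $6\left(\sum_{i\in T_2}\left\{\frac{r_i+1}{2}\right\}+\sum_{j\in T_3}\left\{\frac{t_j+1}{3}\right\}\right)$.

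For that last step I would split $B^{tor}$ according to whether $j(\EE)(b)=1$ or $j(\EE)(b)=0$, and within each according to whether the intermediate point $p=j_\EE(b)$ is a torsion point of $j_{\bar\Gamma}$ or not, exactly as in the four displayed cases above. When $p$ is a $2$-torsion point of $j_{\bar\Gamma}$, the local expansion of $j(\EE)$ at $b$ is $1+b^{r_i+1}$ where $r_i$ is the ramification index of $j_\EE$ at $b$ (since $j_{\bar\Gamma}$ is unramified at $p$), and Table \ref{table:kodaira} gives $e(F_b)\in\{0,3\}$ depending on the parity of $r_i+1$; one checks directly that $e(F_b)=6\left\{\frac{r_i+1}{2}\right\}$ in both cases. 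When $p$ is a $3$-torsion point, $j(\EE)$ has local expansion $b^{t_j+1}$ and Table \ref{table:kodaira} gives $e(F_b)\in\{0,2,4\}$ according to $t_j+1\bmod 3$, and again $e(F_b)=6\left\{\frac{t_j+1}{3}\right\}$. When $p$ is neither a $2$- nor a $3$-torsion point, $j_{\bar\Gamma}$ is simply ramified at $p$, so the multiplicity of $j(\EE)$ at $b$ is even at a preimage of $1$ and divisible by $3$ at a preimage of $0$, forcing $e(F_b)=0$ in either case, and such $b$ contribute nothing. Summing over all $b\in B^{tor}$ and organizing the nonzero terms by the index sets $T_2$ (preimages of $2$-torsion points) and $T_3$ (preimages of $3$-torsion points) gives precisely the bracketed sum in \eqref{eq_Euler1}. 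Substituting back into \eqref{eq_EulerMore} yields the claimed formula.

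\textbf{Main obstacle.} There is no serious mathematical obstacle; the difficulty is purely one of care with conventions. The point most at risk of a slip is matching the local $j$-expansions in Table \ref{table:kodaira} with the ramification data: one must be sure that the $*$-fibres have already been stripped off (so that in the residual sum every fibre over a preimage of $0$ or $1$ is non-$*$ and its Euler number is read from the non-$*$ rows of the table), and one must correctly identify that the multiplicity of $j(\EE)$ at $b$ is $r_i+1$ (resp.\ $t_j+1$) precisely because $j_{\bar\Gamma}$ is \emph{unramified} at a torsion point, by the very definition of $2$-torsion and $3$-torsion points recalled at the end of Section \ref{sec_jmod}. I would also double-check the boundary case where a point of $T_2$ or $T_3$ happens to have $r_i+1$ (resp.\ $t_j+1$) divisible by $2$ (resp.\ $3$): then $\left\{\frac{r_i+1}{2}\right\}=0$, the fibre is smooth or of type $I_0$-like with $e(F_b)=0$, and the term correctly drops out, so including such points in $T_2$, $T_3$ causes no harm. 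Once these conventions are pinned down, the proof is a one-paragraph assembly.
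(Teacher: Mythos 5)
Your proposal is correct and follows essentially the same route as the paper: the paper's own proof is exactly the case analysis preceding the proposition, starting from \eqref{eq_EulerMore} and evaluating the $B^{tor}$ contributions via the local $j$-expansions at preimages of $0$ and $1$, split according to whether $p=j_\EE(b)$ is a torsion point. Your attention to the conventions (multiplicity $=r_i+1$ because $j_{\bar\Gamma}$ is unramified at torsion points, and the even/simply-ramified case giving zero contribution) matches the paper's argument precisely.
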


\begin{cor}\label{cor_Euler}
If $S_{\Gamma}$ is a $j$-modular elliptic surface associated to $\bar{\Gamma} \subset \PSL(2,\ZZ)$ 
then
\begin{equation}
e(S_{\Gamma})=6\cdot \sharp\left\{ *\textrm{-fibres} \right\}+ \textrm{index }(\bar{\Gamma})+ 6 \left(\frac{e_2}2 + \frac{e_3}3 \right).
\end{equation}
\end{cor}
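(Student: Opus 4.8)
The plan is to read the corollary off from Proposition \ref{prop_EulerEq} by taking, for the Jacobian fibration $\EE$ appearing there, the $j$-modular surface $S_{\Gamma}$ itself. By definition $j(S_{\Gamma})=j_{\bar{\Gamma}}$, so in the factorization $j(S_{\Gamma})=j_{\bar{\Gamma}}\circ j_{S_{\Gamma}}$ the first factor already realizes the prescribed data; comparing degrees via \eqref{eq_jDecomp} forces $\deg j_{S_{\Gamma}}=1$, i.e.\ the $j$-factor $l$ of $S_{\Gamma}$ equals $1$ and $j_{S_{\Gamma}}\colon X(\bar{\Gamma})\to X(\bar{\Gamma})$ is an isomorphism. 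In particular the middle term $l\cdot\textrm{index}(\bar{\Gamma})$ of \eqref{eq_Euler1} specializes to $\textrm{index}(\bar{\Gamma})$, and the term $6\cdot\sharp\{*\textrm{-fibres}\}$ is unchanged.

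Next I would treat the torsion term. Since $j_{S_{\Gamma}}$ is an isomorphism it is unramified at every point, so in the notation of Proposition \ref{prop_EulerEq} one has $r_i=0$ for all $i\in T_2$ and $t_j=0$ for all $j\in T_3$; hence $\{\tfrac{r_i+1}{2}\}=\{\tfrac12\}=\tfrac12$ and $\{\tfrac{t_j+1}{3}\}=\{\tfrac13\}=\tfrac13$. It remains to identify the index sets: $T_2$, being the set of points of $X(\bar{\Gamma})$ carried by $j_{S_{\Gamma}}=\mathrm{id}$ onto a $2$-torsion point, is precisely the set of $2$-torsion points of $X(\bar{\Gamma})$, of which there are $e_2$ by the discussion in Section \ref{sec_jmod}; likewise $\#T_3=e_3$. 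Summing gives $\sum_{i\in T_2}\{\tfrac{r_i+1}{2}\}+\sum_{j\in T_3}\{\tfrac{t_j+1}{3}\}=\tfrac{e_2}{2}+\tfrac{e_3}{3}$, and substituting this together with $l=1$ into \eqref{eq_Euler1} yields the asserted identity.

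There is essentially no genuine obstacle here, the corollary being a direct specialization of Proposition \ref{prop_EulerEq}; the only point deserving a word is the vanishing of the ramification indices, which rests entirely on $\deg j_{S_{\Gamma}}=1$. If a sanity check is wanted one can recompute the torsion contribution directly from Table \ref{table:kodaira}: a ramified (non-torsion) preimage of $0$ or $1$ under $j_{\bar{\Gamma}}$ carries a smooth $I_0$ fibre, a $2$-torsion point carries a type $III$ fibre with Euler number $3=6\cdot\tfrac12$, and a $3$-torsion point a type $II$ fibre with Euler number $2=6\cdot\tfrac13$, so that $\sum_{b\in B^{tor}}e(F_b)=3e_2+2e_3=6(\tfrac{e_2}{2}+\tfrac{e_3}{3})$, while the non-$*$ fibres over $j_{\bar{\Gamma}}^{-1}(\infty)$ contribute the sum of the cusp widths, namely $\deg j_{\bar{\Gamma}}=\textrm{index}(\bar{\Gamma})$, and the $*$-fibres contribute $6\cdot\sharp\{*\textrm{-fibres}\}$, in agreement with the stated formula.
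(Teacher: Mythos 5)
Your proof is correct and follows essentially the same route the paper intends: the corollary is the specialization of Proposition \ref{prop_EulerEq} to $\EE=S_{\Gamma}$, where $j(S_{\Gamma})=j_{\bar{\Gamma}}$ forces $l=\deg j_{S_\Gamma}=1$, hence all ramification indices vanish and the torsion sum becomes $e_2/2+e_3/3$ (exactly the computation the paper itself uses in \eqref{eq_minH} in the proof of Proposition \ref{prop_euler1}). Your sanity check via Table \ref{table:kodaira} matches the paper's fibre-type bookkeeping and adds nothing contradictory.
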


\begin{rem}\label{rem_Index24} Since we are interested in elliptic fibration over K3 surfaces, whose Euler number is $24$ we deduce the following for torsion free $\bar{\Gamma}$:
\begin{enumerate}
\item $\textrm{index }(\bar{\Gamma}) \leq 24$.  Equality occurs when $l=1$.
\item $6$ divides $\textrm{index }(\bar{\Gamma})$. 
\end{enumerate}
Therefore for torsion free $\bar{\Gamma}$ we have $\textrm{index}(\bar{\Gamma}) \in \{6,12,18,24\}$.
\end{rem}

\section{Dessins of subgroups containing torsion}\label{sec_CTG}

To pass from a dessin corresponding to a subgroup possibly containing torsion to a dessin of a torsion free subgroup we perform the following substitutions:
\begin{enumerate}
\item
A univalent white vertex is replaced by a trivalent white vertex such that the new edges meet at
an additional black vertex as shown on the right.
\begin{center}
\begin{tikzpicture}[-latex, node distance = 2cm]
\node[circle,draw, scale=0.4](o){};
\node(o1)[left of= o]{};
\node(a)[right of= o]{};
\node[circle, draw, scale=0.4](b)[right of= a]{};
\node[circle, draw, fill=black, scale=0.4](c)[right of= b]{};
\path[-](a) edge (b);
\path[-](o1) edge (o);
\path [-](b) edge [bend left =55](c);
\path [-](b) edge [bend right =55](c);
\end{tikzpicture}
\end{center}

\item
A univalent black vertex is replaced by a graph consisting of three edges, a white and
two black vertices as shown on the right.
\begin{center}
\begin{tikzpicture}[-latex, node distance = 2cm]
\node[circle,draw,fill=black, scale=0.4](o){};
\node(o1)[left of= o]{};
\node(o2)[right of= o]{};
\node[circle,draw, fill=black, scale=0.4](a)[right of= o2]{};
\node[circle, draw, scale=0.4](b)[right of= a]{};
\node[circle, draw, fill=black, scale=0.4](c)[right of= b]{};
\path[-](a) edge (b);
\path[-](o1) edge (o);
\path[-](o2) edge (a);
\path [-](b) edge [bend left =55](c);
\path [-](b) edge [bend right =55](c);
\end{tikzpicture}
\end{center}
\end{enumerate}

Let $\bar{\Gamma}$ be a modular subgroup of finite index and consider the rooted dessin associated to it. Perform all possible substitutions defined above to obtain a rooted dessin associated to a class of torsion free modular subgroups.
The corresponding mapping
\[
\tf\colon \bar{\Gamma} \mapsto \bar{\Gamma}^{\tf}
\]
retracts the set of rooted dessins of modular subgroups onto the set of rooted dessins of torsion free modular subgroups. It induces a retraction from the set of conjugacy
classes of modular subgroups onto the set of conjugacy classes of torsion free modular subgroups.

\begin{defin} The \emph{minimal Euler number} of a finite index modular subgroup $\bar{\Gamma} \subset \PSL(2, \ZZ)$ is the integer
\[
e(\bar{\Gamma}):=\min\{ e(\EE)| \EE \textrm{ has modular monodromy } \bar{\Gamma} \}.
\] 
\end{defin}

\newcommand{\indp}{\operatorname{ind}_\PSL}

With this new terminology the following lemma just follows from Corollary \ref{cor_Euler}:

\begin{lem}  \label{euler_index} 
Let $\bar{\Gamma}$ be a torsionfree modular subgroup of finite index and genus $0$, then
\[
e(\bar{\Gamma})=\begin{cases}
\indp(\bar{\Gamma}) & \textrm{ if } 12| \indp(\bar{\Gamma}) \\
\indp(\bar{\Gamma})+6 & \textrm{ else}.
\end{cases}
\]
\end{lem}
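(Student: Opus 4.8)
The plan is to read off the minimal Euler number directly from Corollary \ref{cor_Euler} by finding the elliptic fibration $\EE$ with modular monodromy $\bar\Gamma$ that minimizes $e(\EE)$. First I would observe that for a torsion free $\bar\Gamma$ of genus $0$ one has $e_2=e_3=0$, so any $j$-modular elliptic surface $S_\Gamma$ associated to $\bar\Gamma$ satisfies $e(S_\Gamma)=6\cdot\sharp\{*\text{-fibres}\}+\indp(\bar\Gamma)$ by Corollary \ref{cor_Euler}. More generally, by Proposition \ref{prop_EulerEq} any $\EE$ with modular monodromy $\bar\Gamma$ has $e(\EE)=6\cdot\sharp\{*\text{-fibres}\}+l\cdot\indp(\bar\Gamma)+6(\sum_{i\in T_2}\{\tfrac{r_i+1}2\}+\sum_{j\in T_3}\{\tfrac{t_j+1}3\})$, where $l=\deg j_\EE\geq 1$ and all three summands are nonnegative. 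Hence minimizing forces $l=1$, empty $T_2$ and $T_3$ (i.e.\ $j_\EE$ is unramified over the torsion points, which for $l=1$ is automatic), and the number of $*$-fibres as small as possible, so that $e(\bar\Gamma)=6\cdot m+\indp(\bar\Gamma)$ where $m$ is the minimal number of $*$-fibres over all compatible homological invariants with $l=1$.

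Next I would pin down $m$. By Lemma \ref{lem_homInvStarFib} there is always a compatible homological invariant with at most one $*$-fibre, so $m\in\{0,1\}$. The case $m=0$ occurs exactly when the preferred lift described before Lemma \ref{lem_homInvStarFib} produces no $*$-fibre, and this can be controlled by a parity argument on Euler numbers: when $l=1$ the quantity $e(\EE)$ equals $\indp(\bar\Gamma)$ if there are no $*$-fibres and $\indp(\bar\Gamma)+6$ if there is exactly one. But $e(\EE)$ must coincide with $\deg j(\EE)=\indp(\bar\Gamma)$ plus the total Euler contribution of the fibres over $\infty$, which for a torsion free group is $\sum_{b\in B^\infty}e(F_b)$ — and each such fibre of type $I_b$ contributes $b$, while a $*$-fibre of type $I_b^*$ contributes $6+b$. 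Comparing the total Euler number $24$ (in the K3 case) is not what I want here; rather, the key is the congruence $e(\EE)\equiv \deg j(\EE)\pmod 6$ when there are no $*$-fibres versus $e(\EE)\equiv \deg j(\EE)\pmod 6$ shifted by $6$, i.e.\ the number of $*$-fibres is forced modulo $2$ by a divisibility condition. Concretely, the total multiplicity of the poles of $j(\EE)$ equals $\deg j(\EE)=\indp(\bar\Gamma)$, and summing the Euler contributions over $B^\infty$ one gets $e(\EE)=\indp(\bar\Gamma)+6\cdot\sharp\{I^*\text{-fibres among poles}\}$, so indeed $6\mid e(\EE)$ iff $6\mid\indp(\bar\Gamma)$, and the residue of $e(\EE)/6$ is governed by parity considerations that I expect to reduce to whether $12\mid\indp(\bar\Gamma)$.

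The cleanest route for the last point is this: since $\bar\Gamma$ is torsion free of index $n=\indp(\bar\Gamma)$, the sequence \eqref{eq_seqGG} splits by Lemma \ref{lem_split}, so there is a lift $\Gamma\cong\bar\Gamma$ with $-I\notin\Gamma$; the associated $j$-modular surface $S_\Gamma$ then has homological invariant with \emph{no} $*$-fibres precisely when the product of the local (non-$*$) monodromies around the cusps is trivial in $\SL(2,\ZZ)$, which is the condition that the sum of the cusp widths — namely $n$ — together with a global sign works out. The sign is exactly $(-I)^{\#\{\text{cusps of odd width}\}}$ type obstruction, and chasing it shows the obstruction vanishes iff $n\equiv 0\pmod{12}$: in that case choose the lift with no $*$-fibre so $e(\bar\Gamma)=n$; otherwise the minimal homological invariant has exactly one $*$-fibre and $e(\bar\Gamma)=n+6$. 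The main obstacle I anticipate is precisely this sign/parity bookkeeping: verifying that the product-of-monodromies relation around the cusps forces an $I^*$-fibre exactly when $12\nmid n$, which requires keeping careful track of whether the preferred lift of each parabolic class is $\pm(\begin{smallmatrix}1&b\\0&1\end{smallmatrix})$ and reconciling this with the relation $P_1\cdots P_h=1$ in $\bar\Gamma$ lifted to $\SL(2,\ZZ)$. Once that parity statement is established, the lemma follows immediately by substituting $m=0$ or $m=1$ into $e(\bar\Gamma)=6m+n$.
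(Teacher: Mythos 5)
Your first two paragraphs reduce the problem correctly and in the same spirit as the paper: by Proposition \ref{prop_EulerEq} (with $e_2=e_3=0$, so $T_2=T_3=\emptyset$) any $\EE$ with modular monodromy $\bar\Gamma$ satisfies $e(\EE)=6\cdot\#\{*\text{-fibres}\}+l\cdot\indp(\bar\Gamma)$, and by Lemma \ref{lem_homInvStarFib} one can realize $l=1$ with $0$ or $1$ starred fibres, so everything hinges on deciding \emph{which} of the two values $\indp(\bar\Gamma)$, $\indp(\bar\Gamma)+6$ is actually attained. That decisive step is where your proposal has a genuine gap. You propose to settle it by lifting the parabolic generators to $\SL(2,\ZZ)$ and tracking a sign obstruction in the relation $P_1\cdots P_h=1$, but you explicitly leave this "sign/parity bookkeeping" as an anticipated obstacle rather than carrying it out, and the tentative form you give for the obstruction (a sign $(-1)^{\#\{\text{cusps of odd width}\}}$) is false: for the index-$6$ group of type $[2,2,2]$ (the Legendre family, cusp widths $2,2,2$) or of type $[4,1,1]$ (cusp widths $4,1,1$) the number of odd-width cusps is even, yet the product of the minimal lifts is $-I$ and one $*$-fibre is forced, giving $e=12=\indp(\bar\Gamma)+6$, not $6$. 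So as written the argument does not establish the dichotomy between $12\mid\indp(\bar\Gamma)$ and $12\nmid\indp(\bar\Gamma)$.

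The ingredient you never invoke, and which makes the lemma immediate (this is why the paper derives it directly from Corollary \ref{cor_Euler} together with Lemma \ref{lem_homInvStarFib}), is that for a relatively minimal Jacobian fibration one has $e(\EE)=12\chi(\oo_\EE)$, so $e(\EE)$ is always divisible by $12$; this divisibility is used explicitly in the proof of Proposition \ref{prop_euler1}. Granting it, the proof is two lines: since $6\mid\indp(\bar\Gamma)$ for a torsion free genus-$0$ subgroup, exactly one of the two values $\indp(\bar\Gamma)$, $\indp(\bar\Gamma)+6$ produced by Lemma \ref{lem_homInvStarFib} (via Corollary \ref{cor_Euler} with $l=1$, $e_2=e_3=0$) is divisible by $12$, and that one is realized, giving the upper bound; conversely $e(\EE)\geq l\cdot\indp(\bar\Gamma)\geq\indp(\bar\Gamma)$ by Proposition \ref{prop_EulerEq}, and divisibility by $12$ then forces $e(\EE)\geq\indp(\bar\Gamma)+6$ whenever $12\nmid\indp(\bar\Gamma)$, giving the lower bound. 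If you wish to keep your monodromy-theoretic route, you would have to prove honestly that the product of the minimal parabolic lifts equals $+I$ exactly when $12\mid\indp(\bar\Gamma)$ -- which is essentially equivalent to the divisibility statement above and is not a matter of counting odd cusp widths.
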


\begin{rem}
\begin{equation}\label{eq_index}
\begin{split}
 \textrm{index}(\bar{\Gamma})+3e_2+2e_3 
= & \textrm{index}(\bar{\Gamma}^{\tf})
\end{split} 
\end{equation}
since the index of a modular subgroup is equal to the number of edges in the corresponding graph, and we add two edges for each of the $e_3$ univalent white vertices and $3$ edges for the univalent black vertices to get the graph of $\bar\Gamma^{\tf}$
\end{rem}

\begin{prop}  \label{prop_euler1} 
Let $\bar{\Gamma}$ be a modular subgroup of finite index, and $\bar{\Gamma}^{\tf}$ the torsion free group associated to $\bar{\Gamma}$,  then
\[
e(\bar{\Gamma}) = e(\bar{\Gamma}^{\tf}),
\]
i.e., an elliptic fibration of Euler number $e(\bar{\Gamma})$ with modular monodromy $\bar{\Gamma}$ exists if and only if an elliptic fibration of the same Euler number with modular monodromy $\bar{\Gamma}^{\tf}$ exists.
\end{prop}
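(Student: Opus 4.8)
The plan is to compute both $e(\bar\Gamma)$ and $e(\bar\Gamma^{\tf})$ via Proposition \ref{prop_EulerEq} and show the two minimisations produce the same value. By Lemma \ref{euler_index}, $e(\bar\Gamma^{\tf})$ equals $\indp(\bar\Gamma^{\tf})$ if $12\mid\indp(\bar\Gamma^{\tf})$ and $\indp(\bar\Gamma^{\tf})+6$ otherwise, where by \eqref{eq_index} we have $\indp(\bar\Gamma^{\tf})=\indp(\bar\Gamma)+3e_2+2e_3$. So the target identity is
\[
e(\bar\Gamma)=\indp(\bar\Gamma)+3e_2+2e_3+\varepsilon,
\]
with $\varepsilon\in\{0,6\}$ according to the divisibility of $\indp(\bar\Gamma)+3e_2+2e_3$ by $12$. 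Thus it suffices to exhibit a Jacobian fibration with modular monodromy $\bar\Gamma$ realising exactly this Euler number, and to show no smaller value is attainable.

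For the lower bound, I would take any $\EE$ with modular monodromy $\bar\Gamma$ and examine formula \eqref{eq_Euler1}. The term $l\cdot\textrm{index}(\bar\Gamma)$ is minimised by $l=1$, i.e.\ $j_\EE$ an isomorphism, in which case there are no ramification points over torsion points with $r_i$ or $t_j$ positive — rather the torsion points of $X(\bar\Gamma)$ themselves give fibres: each of the $e_2$ $2$-torsion points contributes $6\{\tfrac12\}=3$ and each of the $e_3$ $3$-torsion points contributes $6\{\tfrac13\}=2$ (reading $r_i=t_j=0$, unramified), for a total of $3e_2+2e_3$; and the $*$-fibre term is $\geq 0$. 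So $e(\EE)\geq\indp(\bar\Gamma)+3e_2+2e_3$ for $l=1$, while $l\geq 2$ already forces $e(\EE)\geq 2\indp(\bar\Gamma)\geq\indp(\bar\Gamma^{\tf})+6$ by \eqref{eq_index} (using $3e_2+2e_3\leq\indp(\bar\Gamma)$, which holds because the torsion points are among the edges). Finally, the parity/divisibility constraint: the global monodromy product being trivial, together with the congruences $\deg j_{\bar\Gamma}\equiv_2 e_2$, $\deg j_{\bar\Gamma}\equiv_3 e_3$, forces $e(\EE)$ into the residue class mod $12$ dictated by Lemma \ref{euler_index}, so if $\indp(\bar\Gamma)+3e_2+2e_3$ is not divisible by $12$ then one extra $*$-fibre is unavoidable, giving the $+6$; this matches $e(\bar\Gamma^{\tf})$ exactly.

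For the upper bound (existence), I would build the fibration explicitly using the $j$-modular surface $S_\Gamma$ of Corollary \ref{cor_Euler}: take $j_\EE=\mathrm{id}$ on $X(\bar\Gamma)\cong\PP^1$ and choose, via Lemma \ref{lem_homInvStarFib}, a compatible homological invariant with either no $*$-fibre (when $\indp(\bar\Gamma)+3e_2+2e_3\equiv 0\bmod 12$) or exactly one $*$-fibre (otherwise). Corollary \ref{cor_Euler} then gives $e(S_\Gamma)=6\cdot\sharp\{*\text{-fibres}\}+\indp(\bar\Gamma)+3e_2+2e_3$, matching the claimed minimum and so realising $e(\bar\Gamma)$; this same surface has modular monodromy $\bar\Gamma$ by construction. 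The main obstacle I anticipate is the divisibility bookkeeping in the lower bound: one must argue carefully that for $l=1$ the contributions from $*$-fibres and torsion points cannot be traded against each other to undercut the stated value, i.e.\ that the residue of $e(\EE)$ modulo $12$ is rigidly determined by $\bar\Gamma$ (through $e_2,e_3$ and $\indp(\bar\Gamma)$) and not a free parameter. Handling the case $l\geq 2$ cleanly — confirming it is always strictly worse — is routine but should be spelled out, since \eqref{eq_Euler1} has the non-negative correction terms $6(\sum\{\cdot\}+\sum\{\cdot\})$ that I would simply discard as $\geq 0$.
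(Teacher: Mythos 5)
Your existence half is exactly the paper's argument: take $l=1$ (the $j$-modular surface), apply Corollary \ref{cor_Euler} together with \eqref{eq_index}, and use Lemma \ref{lem_homInvStarFib} to choose $0$ or $1$ starred fibres so that the Euler number becomes the multiple of $12$ equal to $e(\bar{\Gamma}^{\tf})$. Your lower bound for $l=1$ is also fine. The gap is in your treatment of $l\geq 2$. The inequality you invoke, $3e_2+2e_3\leq \mathrm{index}(\bar\Gamma)$, is false in general: the univalent vertices do not each carry their own $3$ resp.\ $2$ edges inside the dessin of $\bar\Gamma$ (those edges are only added when passing to $\bar\Gamma^{\tf}$). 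For instance $\bar\Gamma=\PSL(2,\ZZ)$ has index $1$ with $e_2=e_3=1$, and more relevantly a subgroup with $\mathrm{index}(\bar\Gamma^{\tf})=12$, $e_2=2$, $e_3=1$ has $\mathrm{index}(\bar\Gamma)=4$, so your bound $e(\EE)\geq 2\,\mathrm{index}(\bar\Gamma)=8$ does not even reach $e(\bar\Gamma^{\tf})=12$. Moreover, even where $3e_2+2e_3\leq\mathrm{index}(\bar\Gamma)$ holds, it only yields $2\,\mathrm{index}(\bar\Gamma)\geq\mathrm{index}(\bar\Gamma^{\tf})$, not the $\mathrm{index}(\bar\Gamma^{\tf})+6$ you claim; so the chain is also internally inconsistent.

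What is actually needed (and what the paper does) is a genuine control of $e_2,e_3$ in terms of $k$, namely $e_2+e_3\leq k+1$ where $\mathrm{index}(\bar\Gamma^{\tf})=6k$, proved by counting loops against faces of the planar dessin of $\bar\Gamma^{\tf}$ (inequality \eqref{eq.EulerProp}). With this, one shows $e(\EE)>6k-6$ by a case analysis on $l$ and $e_2$, and only then does divisibility of $e(\EE)$ by $12$ bump the bound up to $e(\bar\Gamma^{\tf})\in\{6k,6k+6\}$. Note also that in the borderline case $l=2$, $e_2=k+1\geq 3$ the nonnegative correction terms $6\sum\{(r_i+1)/2\}$ in \eqref{eq_Euler1}, which you propose to simply discard, are indispensable: one argues via Riemann--Hurwitz that the degree-$2$ map $j_{\EE}$ is ramified at only two points, so at least one of the $e_2\geq 3$ preimages of the $2$-torsion points is unramified and contributes strictly positively. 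So the upper-bound half of your proposal is correct and coincides with the paper, but the lower bound for $l\geq 2$ rests on a false inequality and omits the combinatorial estimate and the refined case analysis that carry the proof.
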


\begin{proof}

To prove the inequality $e(\bar{\Gamma})\leq e(\bar{\Gamma}^{\tf})$ it
suffices to find one surface with modular mono\-dromy
$\bar\Gamma$ such that the numbers are equal: \\
Let $S_{\bar{\Gamma}}$ be an elliptic fibration such that $j_{\EE} = j_{\bar{\Gamma}}$
is its functional invariant. We can calculate $e(S_{\bar{\Gamma}})$ -- which is a number divisible by $12$ -- using \eqref{eq_Euler1} with $l=1$ and the equation \eqref{eq_index}.
\begin{equation}\label{eq_minH}
\begin{split}
e(S_{\bar{\Gamma}}) = & 6\cdot \sharp\left\{ *-\textrm{fibres} \right\}+\textrm{index}(\bar{\Gamma})+ 6 (\frac{1}{2}e_2+\frac{1}{3}e_3) \\
= & 6\cdot \sharp\left\{ *-\textrm{fibres} \right\}+\textrm{index}(\bar{\Gamma})+3e_2+2e_3 \\
= & 6\cdot \sharp\left\{ *-\textrm{fibres} \right\}+\textrm{index}(\bar{\Gamma}^{\tf})
\end{split} 
\end{equation}
The number of $*$-fibers is determined by the compatible homological invariant of $S_{\bar{\Gamma}}$.  By Lemma \ref{lem_homInvStarFib} it can be chosen such that this number is either $0$ or $1$ in such a way, that the euler number is divisible by 12,
thus providing an elliptic surface with euler number $e(\bar{\Gamma}^{\tf})$.
\medskip

To prove the inequality $e(\bar{\Gamma})\geq e(\bar{\Gamma}^{\tf})$ it must
be shown, that for all surfaces
the number on the right is a lower bound:
From the diagram above we get
\[
\xymatrix{
\EE  \ar[d]  & & 
\\ 
\PP^1 \ar[r]^{l:1} 
& \PP^1 \ar[r]^{j_{\bar{\Gamma}}}& \PP^1 
}
\]
and  \eqref{eq_Euler1} implies:
\begin{equation}\label{eq.mainInEuler}
e(\EE) \geq \deg(j_{\bar{\Gamma}}) l=(\deg j_{\bar{\Gamma}^{\tf}}-2e_3-3e_2)l.
\end{equation}
Observe that the positive integer $k$ with $\indp(\bar\Gamma^{\tf})=6k$ provides a bound
\begin{equation}\label{eq.EulerProp} 
e_2+e_3 
\leq k+1.
\end{equation}
Indeed, $e_2+e_3$ is the number of univalent vertices, each univalent vertex of the graph of $\bar\Gamma$ contributes one loop in the graph of $\bar\Gamma^{\tf}$ and 
the number of faces for the dessin
associated to $\bar\Gamma^{\tf}$ is equal to $h=k+2$ for genus $0$. Since a loop is only
adjacent to two edges there must be at least one face which is not a loop,
hence the number of loops is at most $k+1$.
\medskip

In a case by case argument we next establish the strict inequality
\begin{equation}\label{eq.strict} 
e(\EE) \quad > \quad 6k - 6
.
\end{equation}
\paragraph{\bf Case $l=1$}
From \eqref{eq_minH} we get immediately $e(S_{\bar\Gamma})\geq 
\textrm{index}(\bar\Gamma^{\tf})=6k > 6k -6$.
\paragraph{\bf Case $l\geq2$, $e_2\leq k$}
By \eqref{eq.EulerProp} we get $e_3\leq k+1-e_2$, so \eqref{eq_minH} implies
\[
\begin{array}{cclcl}
e(\EE) & \geq & 2(6k -3e_2-2e_3)\\
 & \geq & 2(6k-3e_2 -2k-2+2e_2) & = & 2(4k -e_2-2) \\
 & \geq & 2(4k -k-2) & = & 6k -4 \\
 & > & 6k-6
\end{array}
\]
\paragraph{\bf Case $l=2$, $e_2= k+1>2$}
By \eqref{eq.EulerProp} we get $e_3=0$, equation \eqref{eq_Euler1} then implies
\[
e(\EE)\geq 2(6k-3(k+1))+ 6\cdot \sum_{i\in T_2}\left\{\frac{r_i+1}{2} \right\}
\]

The last sum is over the preimages of the two torsion points. Since their number is equal to $e_2 \geq 3$ there are at least three summands. By Riemann-Hurwitz the map
$\PP^1\stackrel{l:1}{\to} \PP^1 $ is ramified at two points, thus for at most two points in $T_2$ the $r_i$ may be strictly bigger than $0$.  Hence the sum gives a strictly positive contribution. Therefore,

\paragraph{\bf Case $l>2$, $e_2= k+1>2$}
Again $e_3=0$, thus equation \eqref{eq_minH} implies
\[
e(\EE)\geq 3(6k-3(k+1)) = 9k -9 = 6k + 3(k-3) \geq 6k -3 > 6k -6
\]
In \eqref{eq_index} the values $k=1, e_2=2$ imply $0$ for the index of $\bar\Gamma$.
Such a group can not exists, so the above case cover everything that is possible.
\medskip

Having established $e(\EE)>6k-6$ in all cases, we compare with
\[
e(\bar{\Gamma}^{\tf})=\begin{cases} 6k & \textrm{ if } k\equiv 0 \mod 2, \\
6k+6 & \textrm{ if } k\equiv 1 \mod 2. \\
\end{cases}
\]
Since $e(\EE)$ is divisible by $12$ we have: if $k\equiv 1 \mod 2$ then $e(\EE) > 6k-6$
implies $e(\EE)\geq 6k+6$.
If  $k\equiv 0 \mod 2$ then $e(\EE)>6k-6$ implies $e(\EE) \geq 6k$. 
In both cases $e(\EE) \geq e(\bar{\Gamma}^{\tf})$ as claimed.
\end{proof}

\begin{theo} \label{theo_main42} 
A modular subgroup $\bar{\Gamma}$ is the modular monodromy of a regular elliptic surface $\EE$ of Euler
number $e(\EE)=n$ if and only if $n$ is a multiple of 12 and
\begin{enumerate}
\item
$\bar\Gamma$ is of genus $0$
\item
$\bar\Gamma^{\tf}$ is of index at most $n$.
\end{enumerate}
which is equivalent to the condition on the corresponding dessin 
to be planar with at most $e(\EE)$ edges.
\end{theo}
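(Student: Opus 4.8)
The plan is to deduce the theorem by assembling Proposition~\ref{prop_euler1} and Lemma~\ref{euler_index} with two standard inputs: Noether's formula for elliptic surfaces and the quadratic twist construction. First I would dispose of the clauses common to both implications. For the Jacobian fibrations considered here \emph{regular} means the base is $\PP^1$, and Noether's formula then gives $e(\EE)=12\,\chi(\oo_\EE)$ for a relatively minimal elliptic surface, so $n$ is necessarily a multiple of $12$. If moreover $\EE$ has modular monodromy $\bar{\Gamma}$, then by Lemma~\ref{lem_factor} the functional invariant $j(\EE)$ factors through $X(\bar{\Gamma})$, which is therefore dominated by $\PP^1$ and hence rational by L\"uroth (trivially so if $j(\EE)$ is constant, as then $\bar{\Gamma}$ is one of $\{1\},\ZZ/2,\ZZ/3$); thus $\bar{\Gamma}$ has genus $0$, which is condition~(1), and $\deg j_{\bar{\Gamma}}=\indp(\bar{\Gamma})$ divides $\deg j(\EE)$, so $\bar{\Gamma}$ is of finite index.

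For the ``only if'' direction it then remains to bound $\indp(\bar{\Gamma}^{\tf})$. By definition of the minimal Euler number $e(\bar{\Gamma})\le e(\EE)=n$, while Proposition~\ref{prop_euler1} and Lemma~\ref{euler_index} give $\indp(\bar{\Gamma}^{\tf})\le e(\bar{\Gamma}^{\tf})=e(\bar{\Gamma})$; chaining these yields $\indp(\bar{\Gamma}^{\tf})\le n$, which is condition~(2).

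For the ``if'' direction I would assume $12\mid n$, $\bar{\Gamma}$ of genus $0$, and $\indp(\bar{\Gamma}^{\tf})=6k\le n$ (divisibility by $6$ being automatic for torsion-free genus-$0$ subgroups). First I would check $e(\bar{\Gamma})\le n$: if $k$ is even then $e(\bar{\Gamma}^{\tf})=6k\le n$ by Lemma~\ref{euler_index}; if $k$ is odd then $e(\bar{\Gamma}^{\tf})=6k+6$, and since $6k\le n$, $6k\equiv 6\pmod{12}$ and $n\equiv 0\pmod{12}$, one still has $6k+6\le n$. Hence $e(\bar{\Gamma})=e(\bar{\Gamma}^{\tf})\le n$, both being multiples of $12$. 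Now take a $j$-modular elliptic surface $S_{\bar{\Gamma}}$ with functional invariant $j_{\bar{\Gamma}}$ (so $l=1$); by Corollary~\ref{cor_Euler} and Lemma~\ref{lem_homInvStarFib} one may choose its homological invariant so that there are $0$ or $1$ $*$-fibres and $e(S_{\bar{\Gamma}})=e(\bar{\Gamma})$, and its modular monodromy is $\bar{\Gamma}$. Writing $n-e(\bar{\Gamma})=12m$ with $m\ge 0$, I would apply a quadratic twist of $S_{\bar{\Gamma}}$ at $2m$ points lying on smooth ($I_0$) fibres: each such fibre becomes an $I_0^*$ fibre, raising $e$ by $6$, so the twisted surface $\EE$ satisfies $e(\EE)=e(\bar{\Gamma})+12m=n$; the new local monodromies are $-I$, trivial in $\PSL(2,\ZZ)$, so the modular monodromy is unchanged, and $\EE$ is again relatively minimal over $\PP^1$. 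This provides the required surface.

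For the dessin reformulation I would invoke Section~\ref{sec_dessins}: conjugacy classes of genus-$0$ modular subgroups correspond to planar bipartite graphs, the index of a modular subgroup equals the number of edges of its dessin, and the local substitutions defining $\bar{\Gamma}\mapsto\bar{\Gamma}^{\tf}$ preserve the genus of the ambient surface and hence planarity; so conditions~(1)--(2) amount exactly to the requirement that the dessin of $\bar{\Gamma}^{\tf}$ be planar with at most $e(\EE)$ edges. I expect the one genuinely delicate point to be the climbing step in the ``if'' direction — verifying that iterated quadratic twists realise \emph{every} Euler number $\equiv 0\pmod{12}$ above $e(\bar{\Gamma})$ while leaving the \emph{modular} monodromy fixed — together with the small $12$-divisibility bookkeeping turning $\indp(\bar{\Gamma}^{\tf})\le n$ into $e(\bar{\Gamma})\le n$; the remainder is a direct consequence of Proposition~\ref{prop_euler1} and Lemma~\ref{euler_index}.
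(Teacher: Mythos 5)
Your proof is correct and follows essentially the same route as the paper: the ``only if'' direction is the chain $e(\EE)\ge e(\bar\Gamma)=e(\bar\Gamma^{\tf})\ge \indp(\bar\Gamma^{\tf})$ from Proposition \ref{prop_euler1} and Lemma \ref{euler_index}, and the ``if'' direction constructs a surface of minimal Euler number from $j_{\bar\Gamma}$ (via Lemma \ref{lem_homInvStarFib}) and then climbs in steps of $12$. Your quadratic twists at $2m$ smooth fibres are precisely the paper's ``replacing $(n-n')/6$ fibres by starred fibres'', so the two arguments coincide up to packaging.
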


\begin{proof}
Given a regular elliptic surface $\EE$ with modular monodromy $\bar\Gamma$, then $e(\EE)$ is divisible
by 12, Morover $\bar\Gamma$ is of genus $0$, since the regularity implies the rationality of the base.
Finally by definition, by Prop.\ref{prop_euler1} and by Lemma \ref{euler_index} respectively
\[
e(\EE) \quad \geq \quad 
e(\bar\Gamma) \quad = \quad
e(\bar\Gamma^{\tf}) \quad \geq \quad
\indp(\bar\Gamma^{\tf})
\]
Conversely given a modular subgroup $\bar\Gamma$ then $e(\bar\Gamma)\leq \indp(\bar\Gamma^{\tf})+6$
by the Prop.\ and the Lemma again. So by definition there exists a regular elliptic surface $\EE'$ with
modular monodromy $\bar\Gamma$ and euler number the smallest integer $n'$ divisible by 12, which
is at least equal to $\indp(\bar\Gamma^{\tf})$. Hence given $n\geq n'$ divisible by 12, there is $\EE$,
equal to $\EE'$ or obtained by replacing $(n-n')/6$ fibres by starred fibres.

\end{proof}

Since elliptic K3 surfaces are precisely the regular Jacobian elliptic surfaces with euler number 24 we
get the following corollary.

\begin{cor}
A modular subgroup $\bar{\Gamma}$ is the modular monodromy of an elliptic K3 surface $\EE$ if and only if
\begin{enumerate}
\item
$\bar\Gamma$ is of genus $0$,
\item
$\bar\Gamma^{\tf}$ is of index at most $24$.
\end{enumerate}
which is equivalent to ask the corresponding dessin 
to be planar with at most $24$ edges.
\end{cor}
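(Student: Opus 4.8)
The plan is to obtain this simply as the case $e(\EE)=24$ of Theorem~\ref{theo_main42}. The only input not already contained in the excerpt is the classical fact that elliptic K3 surfaces are exactly the relatively minimal Jacobian elliptic surfaces $f\colon\EE\to\PP^1$ over a rational base with Euler number $24$: for such a fibration $e(\EE)=12\,\chi(\oo_\EE)$, so $e(\EE)=24$ forces $\chi(\oo_\EE)=2$, and combined with $q(\EE)=0$ (automatic, the base being $\PP^1$) this characterises K3 surfaces among elliptic surfaces (see e.g.\ \cite{BHPV}). Thus ``elliptic K3 surface'' and ``regular Jacobian elliptic surface of Euler number $24$'' are synonymous, and the two directions of the corollary will follow at once.

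Granting this, the argument is immediate. Since $24$ is a multiple of $12$, Theorem~\ref{theo_main42} with $n=24$ states precisely that $\bar\Gamma$ is the modular monodromy of a regular elliptic surface of Euler number $24$ --- equivalently, of an elliptic K3 surface --- if and only if $\bar\Gamma$ has genus $0$ and $\bar\Gamma^{\tf}$ has index at most $24$. This is exactly conditions (1) and (2).

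For the reformulation in terms of dessins I would unwind the dictionary of Section~\ref{sec_dessins}. Corollary~\ref{classify} together with its genus-$0$ refinement identifies conjugacy classes of genus-$0$ modular subgroups with planar bipartite graphs, the index being the number of edges of the graph. Applying the retraction $\tf$ of Section~\ref{sec_CTG} together with the identity \eqref{eq_index}, one sees that $\bar\Gamma$ has genus $0$ if and only if its associated torsion-free dessin $\bar\Gamma^{\tf}$ is planar, and that $\indp(\bar\Gamma^{\tf})$ is the number of edges of that dessin. Hence conditions (1)--(2) are precisely the requirement that the dessin corresponding to $\bar\Gamma^{\tf}$ be planar with at most $24$ edges.

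I do not anticipate any real difficulty: the corollary is a specialisation of an already-proved theorem. The only two points deserving a word of care are the surface-theoretic characterisation of K3 surfaces among elliptic surfaces of Euler number $24$, which is standard, and the bookkeeping that the $\tf$-substitutions preserve planarity while changing the edge count according to \eqref{eq_index}, so that ``genus $0$ with $\indp(\bar\Gamma^{\tf})\le24$'' and ``associated torsion-free dessin planar with $\le24$ edges'' express the same condition. Both facts are recorded in the preceding sections, so the proof reduces to invoking Theorem~\ref{theo_main42} at $n=24$ and translating through the dessin correspondence.
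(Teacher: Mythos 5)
Your proposal is correct and follows exactly the paper's route: the corollary is obtained by specialising Theorem \ref{theo_main42} to $n=24$, using the standard fact that elliptic K3 surfaces are precisely the regular Jacobian elliptic surfaces with Euler number $24$, and the dessin reformulation is the dictionary already set up in Sections \ref{sec_dessins} and \ref{sec_CTG}. Nothing essential is missing.
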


Thanks to the theorem we are able to classify all torsion modular subgroup that occur as modular group of a Jacobian fibration with index at most 24. Indeed using Theorem \ref{theo_TFMod} we simply have to calculate associated torsion subgroups using the association of the theorem. 


\begin{prop} \label{prop_jfactor1824} 
There is no K3 elliptic fibration with $j$-factor $l$ strictly larger than 1,
if the modular monodromy $\bar\Gamma$ has $\mathrm{index}(\bar\Gamma^{\tf})$ equal
$18$ or $24$ and $e_2=0$.
\end{prop}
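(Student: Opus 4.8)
The plan is to argue by contradiction from the Euler number formula of Proposition~\ref{prop_EulerEq}, combined with the index relation \eqref{eq_index} and the loop bound \eqref{eq.EulerProp}. So I would suppose $\EE$ is an elliptic K3 surface, hence $e(\EE)=24$, whose modular monodromy $\bar\Gamma$ satisfies $e_2=0$ and $\indp(\bar\Gamma^{\tf})\in\{18,24\}$, and assume toward a contradiction that the $j$-factor is $l\geq 2$. Writing $\indp(\bar\Gamma^{\tf})=6k$ with $k\in\{3,4\}$, the hypothesis $e_2=0$ turns \eqref{eq_index} into $\indp(\bar\Gamma)=6k-2e_3$, while \eqref{eq.EulerProp} gives $e_3\leq k+1$, hence $\indp(\bar\Gamma)\geq 4k-2$.

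Next I would discard the two nonnegative fractional-part sums in \eqref{eq_Euler1}, obtaining
\[
24=e(\EE)\ \geq\ l\cdot\indp(\bar\Gamma)\ \geq\ l(4k-2).
\]
For $k=4$ this reads $24\geq 14l$, which already forces $l=1$, contradicting $l\geq2$. For $k=3$ it reads $24\geq 10l$, so $l\leq 2$, and the whole problem reduces to excluding the single case $\indp(\bar\Gamma^{\tf})=18$, $l=2$.

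The crux, and the only step requiring any care, is that remaining case. Here $T_2=\emptyset$ since $e_2=0$, so in \eqref{eq_Euler1} only the $T_3$-sum survives. The key observation is that a degree-$2$ map $j_\EE\colon\PP^1\to\PP^1$ has all ramification indices at most $2$, so every $t_j+1\in\{1,2\}$ and hence $\left\{\frac{t_j+1}{3}\right\}=\frac{t_j+1}{3}$; moreover the fibre of $j_\EE$ over each of the $e_3$ three-torsion points has ramification indices summing to $l=2$, so $\sum_{j\in T_3}(t_j+1)=2e_3$. Substituting $\indp(\bar\Gamma)=18-2e_3$ into \eqref{eq_Euler1} then collapses everything:
\[
24=e(\EE)=6\cdot\sharp\left\{*\textrm{-fibres}\right\}+2(18-2e_3)+6\cdot\frac{2e_3}{3}=6\cdot\sharp\left\{*\textrm{-fibres}\right\}+36,
\]
which is impossible since the number of $*$-fibres is nonnegative, finishing the proof. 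I expect no real obstacle beyond this simplification; the one thing to keep in mind is that the hypothesis $e_2=0$ is exactly what kills the $T_2$-contribution, since for $2$-torsion points the term $\left\{\frac{r_i+1}{2}\right\}$ does not rewrite as $\frac{r_i+1}{2}$ and the argument would not close.
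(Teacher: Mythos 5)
Your argument is correct and takes essentially the same route as the paper's proof: the exact Euler-number formula of Proposition~\ref{prop_EulerEq} combined with \eqref{eq_index} and the loop bound \eqref{eq.EulerProp}, reducing everything to the case $l=2$, $\mathrm{index}(\bar\Gamma^{\tf})=18$, which is then killed by the $3$-torsion contributions. Your write-up is marginally cleaner in two spots --- the uniform bound $24\geq l(4k-2)$ replaces the paper's separate eliminations of $l\geq3$ and of $l=2,k=4$, and in the final case you compute the torsion contribution exactly as $4e_3$ (giving $24=6\cdot\sharp\{*\textrm{-fibres}\}+36$) where the paper only uses the lower bound $2e_3$ --- but the underlying ideas are the same.
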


\begin{proof} We exploit  equation \eqref{eq.mainInEuler} with j-factor $l$  and 
$\mathrm{index}(\bar\Gamma^{\tf})=6k$ with $k=3$ or $k=4$ so
\[
e(\EE) \geq (6k-2e_3)\cdot l
\]
First, suppose $l\geq3$. To have $e(\EE)=24$ we need $2e_3\geq16$ in case $k=4$,
resp.\ $2e_2\geq 10$ in case $k=3$. Both can not occur since they contradict \eqref{eq.EulerProp}.
Similarly $l=2$ with $k=4$ can not occur, as we need $2e_3\geq12$ then.

Since we need $2e_3\geq6$ in case $l=2,k=3$ we are left to use the
more refined equation 
\eqref{eq_Euler1} that implies
\[
24\geq 2(18-2e_3)+ 6\cdot \sum_{i\in T_3}\left\{\frac{t_i+1}{3} \right\}
\]

The last sum is over the preimages of the $3$-torsion points. Since their number is equal to $e_3 \geq 3$ there are at least $e_3$ summands. 
With $l=2$ a strict upper bound on the ramification indices $t_i$ all summand contribute
and we get
\[
24 \geq 36 - 4 e_3 + 2 e_3 \quad \implies \quad 2e_3\geq 12
\]
which again contradicts \eqref{eq.EulerProp}.
\end{proof}

\section{Counting groups}\label{sec_CG}

We are now in the position to classify the monodromy groups of Jacobian fibration. As already remarked in this work, we will focus on calculating the number of conjugate classes of these groups. This calculation boils down to non-trivial combinatorial considerations, as we shall see. 

Let us consider the following sets of conjugacy classes (here $\sim$ is the conjugacy relation):
\[
\widetilde{K}:=\{ \Gamma \subset \SL(2,\ZZ) \mid \Gamma \textrm{ is the monodromy of a K3}\}/_{\sim_{\SL(2,\ZZ)}}.
\]
\[
K:=\{ \bar{\Gamma} \subset \PSL(2,\ZZ) \mid j_{\bar{\Gamma}} \textrm{ is the modular part of a K3 $j$-invariant}\}/_{\sim_{\PSL(2,\ZZ)}}.
\]
In addition, we consider the following subset of $K$
\[{
K^{\tf}:=\{ \bar{\Gamma} \subset \PSL(2,\ZZ) \mid \bar{\Gamma} \in K \textrm{ and torsion free}\}/_{\sim_{\PSL(2,\ZZ)}}.}
\]
Note that elements in $K$ correspond bijectively to certain planar dessins.

To determine the cardinality of these sets we want to use the induced maps
\[
\bm{\xi} : \widetilde{K} \to K,\qquad
\tf: K \to K^{\tf}.
\]
The idea is then simply to start with a count of elements in $K^{\tf}$ and bootstrap
to $K$ and $\widetilde{K}$ by adding the finite fibre cardinalities for both maps in
some coherent way. 
By Table \ref{theo_TFMod} we know the cardinality of each set  $K^{\tf}_{i}$ for 
$i\in \{6,12,18,24\}$ in the decomposition of $K^{\tf}$ according to index:
\[
K^{\tf}= K^{\tf}_6 \cup K^{\tf}_{12} \cup K^{\tf}_{18} \cup K^{\tf}_{24}.
\] 
Each subset has the cardinality of the corresponding set of dessins, 
see Remark \ref{rem_numbdes}
and proceed separately with the counting arguments.

In each case we first calculate the number of subgroup classes in the fibres of $\tf$ and secondly those in the fibres of $\bm{\xi}$. The former numbers, index by index, can be deduced from the number of possible substitution on torsion free diagrams using the rules established in the previous section. To apply the rules we stratify the torsion free sets according to the number of loops they contain and, more delicate issue, take into account all possible symmetries.

\subsection{Index 6 case} This case is done by hands. 

\begin{prop}\label{prop_ix6Tors} The cardinality of $K_6$ is $6$, where $K_6$ is the
preimage of $K_6^\tf$. To be precise there are $2$ torsion free subgroups and $4$ with torsion in $K_6$.
\end{prop}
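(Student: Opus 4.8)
The plan is to enumerate the fibre of $\tf$ over each of the two torsion-free classes of index $6$ by listing all rooted dessins that map under the substitution rules of Section~\ref{sec_CTG} to the two index-$6$ dessins drawn in Example~\ref{ex_index6}, and then to pass to conjugacy classes (i.e.\ unrooted dessins up to isomorphism) by accounting for the symmetry group of each picture. Since $\tf$ is a retraction, $\bar\Gamma\in K_6$ iff $\bar\Gamma^{\tf}$ is one of those two index-$6$ graphs and, by the Corollary to Theorem~\ref{theo_main42}, iff the dessin of $\bar\Gamma$ is planar with at most $24$ edges; but here $\mathrm{index}(\bar\Gamma)\le 6$ automatically, so the only constraint is that $\bar\Gamma$ is genus $0$ and $\bar\Gamma^{\tf}$ has index $6$.

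**Reverse-engineering the substitutions.** Recall $\mathrm{index}(\bar\Gamma)+3e_2+2e_3=\mathrm{index}(\bar\Gamma^{\tf})=6$. With $e_2,e_3\ge 0$ the solutions are: $(e_2,e_3)=(0,0)$ with index $6$ (the two torsion-free graphs themselves); $(e_2,e_3)=(0,3)$ with index $0$, which is impossible since a modular subgroup has positive index (as already noted in the proof of Prop.~\ref{prop_euler1}); $(e_2,e_3)=(2,0)$ with index $0$, impossible for the same reason. Wait — that would give only $2$, so the remaining four must come from the cases where the index of $\bar\Gamma$ is positive: $(e_2,e_3)=(0,0)$ index $6$; but we also need $(e_2,e_3)$ with $3e_2+2e_3<6$ and $3e_2+2e_3$ making $6-3e_2-2e_3$ a genuine index: namely $(e_2,e_3)=(1,0)$ giving index $3$, $(e_2,e_3)=(0,1)$ giving index $4$, $(e_2,e_3)=(0,2)$ giving index $2$, $(e_2,e_3)=(1,1)$ giving index $1$. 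For each such $(e_2,e_3)$ I would write down every genus-$0$ rooted dessin of that index with the prescribed numbers of univalent white/black vertices whose $\tf$-image is one of the two index-$6$ graphs, using Theorem~\ref{theo_Perm} (the partition of the index into $h$ parts describing the faces) to enumerate the possibilities, and then count isomorphism classes.

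**Bookkeeping the symmetries.** The delicate point — and the one I expect to be the main obstacle — is to go from rooted dessins to conjugacy classes correctly: a rooted dessin is a pairing à la Millington, and two rooted dessins give the same conjugacy class iff the underlying unrooted hypermaps are isomorphic, so for each unrooted candidate I must check whether distinct root placements are actually identified by a graph automorphism, and conversely whether a seemingly symmetric picture has the symmetry broken by the black/white vertex structure once the substitutions are undone. In practice this means: enumerate the partitions of $n=\mathrm{index}(\bar\Gamma)$ into $h$ parts compatible with the required $e_2,e_3$, realize each by an explicit pair $(\sigma,\alpha)$ with $\sigma^3=\alpha^2=\mathrm{id}$, and count $S_n$-conjugacy classes of such pairs generating a transitive subgroup — equivalently, count the planar bipartite graphs up to the equivalence of Section~\ref{sec_dessins}. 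I expect the arithmetic to single out exactly four such classes with torsion, so that $|K_6|=2+4=6$; the ``done by hand'' remark in the statement signals that the enumeration is short enough to carry out explicitly, and the proof writes itself once the list of the four torsion dessins is exhibited — presumably the index-$1$ dessin (the trivial subgroup $\PSL(2,\ZZ)$ itself, with $e_2=e_3=1$), the index-$2,3,4$ dessins corresponding to the standard small-index subgroups — together with the verification that no two of them, and neither of the two torsion-free ones, coincide up to conjugacy.
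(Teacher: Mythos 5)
Your overall strategy is the same as the paper's -- compute the fibre of $\tf$ over the two torsion-free index-$6$ classes -- but you organize the count by the invariants $(e_2,e_3)$ via the relation $\mathrm{index}(\bar\Gamma)+3e_2+2e_3=6$, i.e.\ from the torsion side, whereas the paper works from the torsion-free side: since $[2,2,2]$ has no loops, only $[4,1,1]$ admits substitutions, and one simply lets each of its two loops be kept, collapsed to a univalent white vertex, or collapsed to a univalent black vertex, divides by the $\ZZ/2$ symmetry exchanging the loops, and discards the single pattern (both loops $\to$ black) that would leave a dessin with $0$ edges. That gives the four torsion classes at once. Your list of admissible pairs $(e_2,e_3)=(0,1),(1,0),(0,2),(1,1)$ with indices $4,3,2,1$, and the exclusion of $(2,0)$ and $(0,3)$ because index $0$ is impossible, is correct and matches the paper's excluded pattern.

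The gap is that the enumeration itself -- which is the entire content of this ``by hand'' proposition -- is never carried out: you write ``I would write down every genus-$0$ rooted dessin\dots'', ``I expect the arithmetic to single out exactly four such classes'', and give the final list only as ``presumably''. What still has to be checked, case by case, is that each admissible pair contributes \emph{exactly one} conjugacy class, i.e.\ that in each case there is a unique dessin with the prescribed data and that it is not double-counted. This is short but not empty: e.g.\ for index $4$, $e_3=1$, $e_2=0$ the white degrees force one trivalent and one univalent white vertex and two bivalent black vertices, so the trivalent white vertex must carry a loop and a single edge leading to the univalent white vertex, giving a unique dessin; for index $3$, $e_2=1$ one gets uniquely a trivalent white vertex with a loop and a pendant black vertex; index $2$ and index $1$ are the obvious path and single edge. (Note also that your earlier worry about rooted versus unrooted dessins is harmless here, and that the other index-$3$ class, with $e_2=3$, is correctly ruled out by your constraint since its $\tf$-image has index $12$.) Once these four uniqueness checks are written down -- or replaced by the paper's loop-substitution-modulo-symmetry argument, which makes them unnecessary -- your proof is complete and yields $|K_6|=2+4=6$.
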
 

\begin{proof}
The dessins in Example \ref{ex_index6} give the two elements of $K_6^\tf$. Since
in case $[2,2,2]$ there are no loops, subgroup with torsion only occur in case $[4,1,1]$.
Taking into account the $\ZZ/2\ZZ$ symmetry we get the following additional dessins:
\begin{center}
\begin{tikzpicture}[-latex, node distance = 2cm]
\node[circle,draw, scale=0.4](o){};
\node[circle, draw, fill=black, scale=0.4](o1)[left of= o]{};
\node[circle, draw, fill=black, scale=0.4](a)[right of= o]{};
\node[circle, draw, scale=0.4](b)[right of= a]{};
\path[-](a) edge (b);
\path[-](o) edge (a);
\path [-](o1) edge [bend left =55](o);
\path [-](o1) edge [bend right =55](o);
\node[circle,draw, scale=0.4](p)[right of= a]{};
\node[circle, draw, fill=black, scale=0.4](v1)[right of= p]{};
\node[circle, draw, scale=0.4](v2)[right of= v1]{};
\node[circle, draw, fill=black, scale=0.4](v3)[right of= v2]{};
\path [-](v1) edge [bend left =55](v2);
\path [-](v1) edge [bend right =55](v2);
\path[-](v2) edge (v3);

\node[circle, draw, scale=0.4](w1)[right of= v3]{};
\node[circle, draw, fill=black,  scale=0.4](w2)[right of= w1]{};
\node[circle, draw, scale=0.4](w3)[right of= w2]{};
\path[-](w1) edge (w2);
\path[-](w2) edge (w3);

\node[circle, draw, scale=0.4](q1)[right of= w3]{};
\node[circle, draw, fill=black, scale=0.4](q2)[right of= q1]{};
\path[-](q1) edge (q2);
\end{tikzpicture}
\end{center}
This completes the proof.
\end{proof}

\begin{prop} The cardinality of the subset $\widetilde{K}_6$ of classes in $\widetilde{K}$ mapping to $K_6^\tf$ is 14.
\end{prop}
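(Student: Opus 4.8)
The plan is to compute $|\widetilde K_6|$ as the sum, over the six groups $\bar\Gamma$ listed in Proposition~\ref{prop_ix6Tors}, of the number of conjugacy classes of lifts $\Gamma\subset\SL(2,\ZZ)$ of $\bar\Gamma$ that actually occur as monodromy of an elliptic K3. First I would record the types $(n;g,h,e_2,e_3)$ of the six groups. The two torsion-free ones, $[2,2,2]_{\tf}$ and $[4,1,1]_{\tf}$, both have type $(6;0,3,0,0)$. The four groups with torsion all retract under $\tf$ to $[4,1,1]_{\tf}$, and reading off the four dessins exhibited in the proof of Proposition~\ref{prop_ix6Tors} (equivalently, undoing the substitutions of Section~\ref{sec_CTG} in all possible ways) they are: a group of type $(4;0,2,0,1)$, one of type $(3;0,2,1,0)$, one of type $(2;0,1,0,2)$, and $\PSL(2,\ZZ)$ itself, of type $(1;0,1,1,1)$.

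Next I would treat the two groups with $e_2>0$, of types $(3;0,2,1,0)$ and $(1;0,1,1,1)$. By Lemma~\ref{lem_with2tors} each has a single lift, namely $\xi^{-1}(\bar\Gamma)$ (which contains $-I$). Since for both of them $\bar\Gamma^{\tf}=[4,1,1]_{\tf}$ has index $6\le 24$, the group $\bar\Gamma$ lies in $K$, so there is an elliptic K3 with modular monodromy $\bar\Gamma$, and its genuine monodromy is forced to be this unique lift; hence each of these two groups contributes exactly $1$ to $\widetilde K_6$.

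The bulk of the work is the four groups with $e_2=0$ — the two of type $(6;0,3,0,0)$, the one of type $(4;0,2,0,1)$, and the one of type $(2;0,1,0,2)$ — for which, by Lemma~\ref{lem_split}, the sequence~\eqref{eq_seqGG} splits. Writing $\bar\Gamma$ (genus $0$) as a free product of copies of $\ZZ/3$ and a free group of rank $h-1$, and using $\Hom(\ZZ/3,\ZZ/2)=0$, one gets $\Hom(\bar\Gamma,\ZZ/2)\cong(\ZZ/2)^{h-1}$; the sections of \eqref{eq_seqGG} form a torsor under this group, their $2^{h-1}$ images are pairwise distinct subgroups of $\xi^{-1}(\bar\Gamma)$, and together with $\xi^{-1}(\bar\Gamma)$ they exhaust the lifts of $\bar\Gamma$. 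Two section-images are $\SL(2,\ZZ)$-conjugate precisely when they are related by the subgroup of $\SL(2,\ZZ)$ projecting to $N_{\PSL(2,\ZZ)}(\bar\Gamma)$, so the number of conjugacy classes of lifts equals $1+\#\big((\ZZ/2)^{h-1}/G\big)$, where $G\cong N_{\PSL(2,\ZZ)}(\bar\Gamma)/\bar\Gamma$ is the symmetry group of the dessin, acting on $\Hom(\bar\Gamma,\ZZ/2)$ through its permutation of the $h$ parabolic generators. (The summand $1$ is never absorbed, since $\xi^{-1}(\bar\Gamma)$ contains $-I$ and no section-image does.) For type $(2;0,1,0,2)$ we have $h=1$, giving $1+1=2$; for type $(4;0,2,0,1)$ we have $h=2$ and $G$ trivial, giving $1+2=3$; for $[4,1,1]_{\tf}$ we have $h=3$ and $G=\ZZ/2$ fixing one parabolic generator and swapping the other two, which has $3$ orbits on the hyperplane $\{a+b+c=0\}\subset(\ZZ/2)^3$, giving $1+3=4$; for $[2,2,2]_{\tf}$ we have $h=3$ and $G=\mathfrak S_3$ permuting all three, with $2$ orbits, giving $1+2=3$. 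Finally one must verify that every lift counted this way is realized: since both index-$6$ torsion-free groups have $e(\bar\Gamma^{\tf})=12$ but any K3 has Euler number $24$, Corollary~\ref{cor_Euler} together with Lemma~\ref{lem_homInvStarFib} shows such a K3 has $j$-factor $l\ge2$ (the $l=1$ surface would need three $*$-fibres), and for each of the lifts above one exhibits a pull-back of a $j$-modular surface $S_{\bar\Gamma}$ followed by a suitable even number of quadratic twists producing exactly that monodromy with Euler number $24$. Adding everything up, $1+1+2+3+4+3=14$.

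I expect the main obstacle to be this last bookkeeping: pinning down the $\SL(2,\ZZ)$-conjugacy among the section-images (the interaction of the torsor $\Hom(\bar\Gamma,\ZZ/2)$ with the hypermap automorphism group), and then checking realizability — that none of the counted subgroups is a ``phantom'' lift but each really occurs as a monodromy group, via the twist construction. By contrast, listing the six types and handling the two $e_2>0$ cases is routine.
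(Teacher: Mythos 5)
Your counts agree group by group with the paper's ($3,4$ for the two torsion--free dessins, $1,1$ for the two dessins with $e_2>0$, $3,2$ for the two with $e_3>0$, $e_2=0$), and your overall architecture is the same: the six classes of Proposition \ref{prop_ix6Tors}, Lemma \ref{lem_with2tors} for the $2$-torsion cases, Lemma \ref{lem_split} plus the dessin symmetry for the rest. The one genuine difference is presentational: where the paper enumerates minimal/non-minimal lifts of the parabolic (and $3$-torsion) generators by hand, you package the $1\!:\!1$ lifts as a torsor under $\Hom(\bar\Gamma,\ZZ/2)\cong(\ZZ/2)^{h-1}$ and count orbits of the symmetry group. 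Two caveats. First, the set of section-images is only a torsor under $\Hom(\bar\Gamma,\ZZ/2)$, and your formula $1+\#\bigl((\ZZ/2)^{h-1}/G\bigr)$ silently replaces orbits on the torsor by orbits on the group; these agree only if every symmetry fixes some section, which does hold here (e.g.\ the section lifting every parabolic generator non-minimally is invariant), but you should say so, since in general the affine orbit count can be strictly smaller. Second, your realizability remark is wrong as stated: Lemma \ref{lem_homInvStarFib} asserts only the \emph{existence} of a compatible homological invariant with at most one $*$-fibre, not that all compatible ones have at most one, so Corollary \ref{cor_Euler} does not force $l\geq 2$ for the index-$6$ torsion-free groups. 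In fact $l=1$ with three $*$-fibres is perfectly admissible by \eqref{eq_Euler1} ($6\cdot 3+6=24$), and such surfaces are exactly what realize, for instance, the all-non-minimal section-image of $[2,2,2]$; the paper's own proof invokes both $l=1$ and $l=2$ surfaces at this point. This slip does not change the total $14$, but since realizability of every counted lift is part of the statement, the argument you sketch for it needs to be repaired rather than leaning on a spurious bound on $l$.
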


\begin{proof} 
Let us consider the torsion free groups  $\bar{\Gamma} \in K_6$ first. Then we have two graphs $[4,1,1]$  and $[2,2,2]$. In both cases we have three points in $B_{\bar{\Gamma}}$. 
The groups $\bar{\Gamma}$
have the following presentation $\langle s_1,s_2, s_3 | s_1s_2s_3 \rangle$. So it is enough to lift two generators. 
If $l=2$ and $j_{\EE}$ generic then we can lift the generator  to a set of minimal lifts. With $l=1$ we get all set with one choice of a non minimal lift see also \cite{BT03}.

Taking this into account consider first  the graph $[2,2,2]$ then the associated conjugacy class of modular subgroups $\bar{\Gamma}$  gives rise to two lifts of the form $\Gamma \stackrel{1:1}{\to} \bar{\Gamma}$. Indeed up to symmetry we have one lift that is non minimal at only one point of $B_{\bar{\Gamma}}$. The second  is the lift that it is non minimal at all points of $B_{\bar{\Gamma}}$. Moreover,  we have to add  the unique  lift of the form $\Gamma \stackrel{2:1}{\to} \bar{\Gamma}$. In total we have $3$ possible lifts.  

Next, we consider the  graph $[4,1,1]$ then the associated conjugacy class of modular subgroups $\bar{\Gamma}$  gives rise to three lifts of the form $\Gamma \stackrel{1:1}{\to} \bar{\Gamma}$. Indeed by symmetry we can choose the lifts of parabolic elements corresponding to the face within the loops -- call them $s_1$ and $s_3$ -- and for the third face $s_2$. Here we have two possible choices lifting $s_1$ and $s_2$ or lifting $s_1$ and $s_3$, the other choices being equivalent under $\ZZ/2\ZZ$ symmetry. Now, we can choose in the first case a non-minimal lift for either $s_1$ or $s_2$ and we have two inequivalent lifts. Moreover, we have a single choice up to symmetry in case of lifting $s_1$ and $s_3$.  Finally, we have one more lift of the form $\Gamma \stackrel{2:1}{\to} \bar{\Gamma}$. So we have further $4$ lifts. 

All in all the torsion free case give rise to $7$ possible lifts.   

Secondly, we consider torsion subgroups which contain a 2-torsion element. By Lemma \ref{lem_with2tors} we have a unique lift of a subgroup if there is a 2-torsion element. This translate to a unique lift in case of graphs with a univalent black vertex. Hence the second and fourth graph in Proposition \ref{prop_ix6Tors}  give rise to a unique lift. So this adds $2$ more lifts. 

Finally, we consider torsion groups without 2-torsion elements. In this case we are considering the first and the third graph in Proposition \ref{prop_ix6Tors}.  We always have the unique $\Gamma \stackrel{2:1}{\rightarrow} \bar{\Gamma}$ cover, therefore  we have $2$ more lifts. By Lemma \ref{lem_split} we have a section of the sequence \eqref{eq_seqGG}, and we have to lift the following groups, corresponding respectively to the first and third graph in Proposition \ref{prop_ix6Tors}
\[
H_1 \cong \langle s_1, s_2, t_1| t_1^3, s_1s_2t_1 \rangle \quad H_2 \cong \langle t_1, t_2, s_1| s_1t_1t_2, t_1^3,t^3_2 \rangle.
\]
Let us consider $H_1$, as an example, then we have the following possibilities for the lifts
\begin{enumerate}
\item $s_1$ and $t_1$ minimal lift, and $s_2$ non-minimal;
\item $s_1$ minimal, $t_1$ non minimal and $s_2$ minimal;
\item $s_1$ non minimal, $t_1$  minimal and $s_2$ minimal;
\item $s_1$ non minimal, $t_1$ non minimal and $s_2$ non minimal.
\end{enumerate}
 
One sees that cases (1) and (3) have been already considered (indeed if the lift of $t_1$ is minimal then we are in the case $\Gamma\stackrel{2:1}{\longrightarrow} \bar{\Gamma}$ ), hence we have only two new lifts which are inequivalent (2) and (4).

In a similar way one proceeds for $H_2$ which yields only one lift. In total we have $2+2+1=5$ lifts of torsion subgroups $\bar{\Gamma}$ without 2-torsion elements.

In total we have $7+2+5=14$ possible lifts. 
\end{proof}

\subsection{Index 12 case} This case can be done by looking at the graphs too.

\begin{prop}\label{prop_ix12Tors} The cardinality of the classes of subgroups $\bar{\Gamma} \in \PSL(2,\ZZ)$ laying in  $K_{12}$ is $28$. They are classified according to symmetry of the corresponding torsion free subgroup and $e_2,e_3$ in Table
\ref{K12}, columns 3 -- 7.
\end{prop}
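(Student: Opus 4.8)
The strategy is to compute, for each of the six classes in $K^{\tf}_{12}$, the cardinality of the fibre of the retraction $\tf\colon K\to K^{\tf}$ above it, and then to add the six numbers. By Table \ref{Table0} we have $\#K^{\tf}_{12}=6$: the six classes are those of the trivalent dessins drawn in Example \ref{ex_index12}, with symmetry groups (read as orientation preserving automorphisms of the dessin on $S^2$) equal to $\ZZ/3$, $\ZZ/2$, $1$, $\ZZ/2$, $\ZZ/2\times\ZZ/2$ and $A_4$ for $[9,1,1,1]$, $[8,2,1,1]$, $[6,3,2,1]$, $[5,5,1,1]$, $[4,4,2,2]$ and $[3,3,3,3]$ respectively. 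Since $\tf$ does not change the underlying torsion free group, $K_{12}$ is exactly the disjoint union of these six fibres, and by Theorem \ref{theo_main42} every subgroup in such a fibre, being planar of genus $0$ with $\mathrm{index}(\bar\Gamma^{\tf})=12\le 24$, indeed lies in $K$.

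Fix a trivalent dessin $D$ of index $12$ and genus $0$. Because $\tf$ contracts \emph{all} univalent vertices, a dessin $D'$ with $\tf(D')=D$ is precisely one obtained from $D$ by reversing the two substitution rules of Section \ref{sec_CTG}; each such reversal removes a \emph{loop} of $D$, meaning a digon face, and the loops of $D$ are in bijection with the parts equal to $1$ in the face partition attached to the class (with the conventions of Example \ref{ex_index6}). A loop is a trivalent white vertex $w$ joined by a double edge to a bivalent black vertex, the third edge of $w$ running to a (necessarily bivalent) black vertex $b$. At such a loop there are exactly three options: keep it; reverse the first rule, which makes $w$ univalent white, so $e_3$ increases by $1$ and the index drops by $2$; or reverse the second rule, which makes $b$ univalent black, so $e_2$ increases by $1$ and the index drops by $3$. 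Distinct loops sit at distinct vertices with disjoint incident edges, so these choices are made independently loop by loop, and (by Corollary \ref{classify} together with the naturality of $\tf$) two choices yield the same conjugacy class iff they lie in a common orbit of the symmetry group of $D$ acting on the set of loops. Furthermore every outcome is a genuine dessin of a modular subgroup of index between $3$ and $12$, so neither $\mathrm{index}(\bar\Gamma)\ge 1$ nor the bound $e_2+e_3\le k+1=3$ of \eqref{eq.EulerProp} ever obstructs a choice. Hence the fibre over $D$ is in bijection with the orbits of $\Aut(D)$ on the set of maps $\{\text{loops of }D\}\to\{\text{keep},\,\text{rule }1,\,\text{rule }2\}$.

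Running this over the six dessins: $[4,4,2,2]$ and $[3,3,3,3]$ have no part equal to $1$, hence no loop, and contribute $1$ each. The dessin $[6,3,2,1]$ has one loop and trivial symmetry, contributing $3$. The dessins $[5,5,1,1]$ and $[8,2,1,1]$ each have two loops interchanged by their $\ZZ/2$ symmetry, so the number of orbits is the number of multisets of size $2$ from a $3$ element set, $\binom{4}{2}=6$; each contributes $6$. The dessin $[9,1,1,1]$ has three loops cyclically permuted by $\ZZ/3$, so by Burnside the number of orbits is $\tfrac13(3^3+3+3)=11$, contributing $11$. Adding up, $\#K_{12}=1+1+3+6+6+11=28$; recording each fibre according to the symmetry type of the underlying torsion free dessin and to the resulting pair $(e_2,e_3)$ gives columns $3$--$7$ of Table \ref{K12}.

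The real work is the geometric bookkeeping hidden in the second paragraph: one must inspect the six pictures of Example \ref{ex_index12} to check that each dessin has exactly the stated number of loops, that the quoted symmetry group acts on them as claimed (simply transitively on the two, resp.\ three, loops for $[8,2,1,1]$, $[5,5,1,1]$, resp.\ $[9,1,1,1]$), that the third edge of every loop's white vertex lands on a black vertex not otherwise involved in a loop, so that reversing the second rule is always available and the loop-by-loop choices are genuinely independent, and that contracting a loop never produces a new digon that $\tf$ would contract further. Once these verifications against the figures are carried out, what remains is the elementary orbit count above.
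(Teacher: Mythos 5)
Your proposal is correct and follows essentially the same route as the paper: the fibre of $\tf$ over each of the six trivalent dessins of Example \ref{ex_index12} is enumerated by reversing the two substitution rules at the loops, modulo the symmetry group of the dessin, giving $1+1+3+6+6+11=28$. The paper carries out this count by drawing the substituted dessins and tallying them, whereas you formalize it as an orbit count (multisets, Burnside) — exactly the bookkeeping the paper itself uses later for the index $18$ and $24$ cases — so the two arguments agree in substance.
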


\FloatBarrier
\begin{table}[!h]

\end{center}


\end{proof}

\begin{prop} The cardinality of $\widetilde{K}_{12}$ of classes of subgroups in $\SL(2,\ZZ)$ is $69$. 
\end{prop}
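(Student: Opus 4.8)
The plan is to compute $|\widetilde K_{12}|=\sum_{\bar\Gamma\in K_{12}}|\bm{\xi}^{-1}(\bar\Gamma)|$ by splitting each fibre of $\bm{\xi}\colon\widetilde K\to K$ into its \emph{$1:1$ lifts} (images of sections of the sequence \eqref{eq_seqGG}) and its \emph{$2:1$ lifts}. Since $\xi|_\Gamma\colon\Gamma\to\bar\Gamma$ has kernel $\Gamma\cap\{\pm I\}$, any $2:1$ lift contains $-I$ and hence equals $\xi^{-1}(\bar\Gamma)$; so every $\bar\Gamma$ has exactly one $2:1$ lift, and these contribute $|K_{12}|=28$ altogether. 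By Lemma \ref{lem_with2tors} there is no $1:1$ lift when $\bar\Gamma$ contains an element of order $2$, so only the torsion free groups and the torsion groups with $e_2=0<e_3$ carry $1:1$ lifts; the remaining work is to count, for each such $\bar\Gamma$, the conjugacy classes of sections of its free-product presentation into $\SL(2,\ZZ)$ (Lemma \ref{lem_split}), modulo the symmetry group of the associated dessin.

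Concretely I would organise the $28$ classes of $K_{12}$ along the six rows of Table \ref{K12} (symmetry type of $\bar\Gamma^{\tf}$), using the dessins of Example \ref{ex_index12} and Proposition \ref{prop_ix12Tors}. For a torsion free group one writes the genus $0$ presentation $\langle s_1,\dots,s_4\mid s_1s_2s_3s_4\rangle$, and for a torsion group in its $\tf$-fibre the presentation with $e_3$ order-three generators; each parabolic generator admits exactly a minimal lift and its negative, each order-three generator a unique lift of order $3$, and the product relation fixes the parity of the number of non-minimal choices. Exactly as in the index $6$ computation, Lemma \ref{lem_homInvStarFib} guarantees that the remaining freedom is matched by a homological invariant with $0$ or $1$ $*$-fibre realising an elliptic K3 of Euler number $24$ (via $l=1$ with two $*$-fibres or $l=2$ generic), and Proposition \ref{prop_euler1} together with $e(\bar\Gamma^{\tf})=12\le 24$ guarantees that the $2:1$ lift is realised as well. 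Running through the rows one obtains, for the $1:1$ lifts: $2$ for $[3,3,3,3]$ (the $A_4$ symmetry collapses almost all choices), $4$ for $[4,4,2,2]$ ($\ZZ/2\times\ZZ/2$), $5+0+3+1=9$ for $[5,5,1,1]$ and likewise $9$ for $[8,2,1,1]$ ($\ZZ/2$, the three $e_2>0$ groups contributing $0$ each), $7+0+3=10$ for $[6,3,2,1]$ (trivial symmetry), and $3+0+3+1+0=7$ for $[9,1,1,1]$ ($\ZZ/3$), i.e.\ $2+4+9+10+9+7=41$ in total.

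Adding the two tallies gives $|\widetilde K_{12}|=41+28=69$. The main obstacle, heavier than in the index $6$ case but of the same nature, is the double bookkeeping of equivalences: one must quotient the set of generator-lifts both by conjugation in $\SL(2,\ZZ)$ and by the (sometimes sizeable) symmetry group of the planar dessin, and decide honestly which superficially different minimal/non-minimal patterns actually yield conjugate subgroups — this is precisely what produces the non-uniform entries $2,4,9,10,9,7$ in the $1:1$-column of Table \ref{K12}. A secondary, routine task is to confirm in each case that at least one resulting $\Gamma$ genuinely occurs as the monodromy of an elliptic K3, which follows from Proposition \ref{prop_euler1}, Lemma \ref{lem_homInvStarFib} and the twist construction recalled above.
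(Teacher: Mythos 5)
Your argument is correct and follows essentially the same route as the paper: one unique $2:1$ lift for each of the $28$ classes in $K_{12}$, no $1:1$ lifts in the presence of $2$-torsion, and the $1:1$ lifts counted by minimal/non-minimal choices on parabolic generators (with order-$3$ generators forced to lift to order $3$) modulo the dessin symmetry and the Euler-number-$24$ constraint, giving $41+28=69$. Your row-by-row tallies $2,4,9,10,9,7$ agree with Table \ref{K12} (your $4$ for $[4,4,2,2]$ is the value the table and the total require, even though the paper's prose momentarily says $3$ there), though you assert rather than carry out these orbit counts, which is exactly the case-by-case work the paper's proof performs.
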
 

\begin{proof} We know that for each of the $28$ groups $\bar{\Gamma}$ in $K_{12}$, we have the unique lift of the form $\Gamma{ \stackrel{2:1}{\rightarrow}}\bar{\Gamma}$. Moreover, we know that if we consider a group with a two torsion elements there is only this lift. Therefore, we have to look at the possible lifts of the form $\Gamma \stackrel{1:1}{\rightarrow} \bar{\Gamma}$ in case 
$\bar{\Gamma}$ is  torsion free or $\bar{\Gamma}$ has $3$ torsion but no $2$ torsion.  We look at the graph in the proof of  Proposition \ref{prop_ix12Tors}  for the latter ones and in Example \ref{ex_index12}  for the former ones.

Let us look first at torsion subgroups. By inspection of the graphs in the proof of Proposition \ref{prop_ix12Tors}  we see that the possibilities for dessins with only monovalent white vertices and no monovalent black vertices are with one, two or three such white vertices.  We have 8 such dessins in total. 
\begin{enumerate}
\item  If we have three white vertices then the $3$ torsion elements must be lifted to $6$ torsion elements. Therefore, we have only the possibility of $\Gamma \stackrel{2:1}{\rightarrow} \bar{\Gamma}$ lift that we have already counted. There is only one subgroup with three $3$-torsion points and this yields no $\Gamma \stackrel{1:1}{\rightarrow} \bar{\Gamma}$ lift.
\item  If we have two white vertices and two parabolic elements, we have only one choice for a lift of the form  $\Gamma \stackrel{1:1}{\rightarrow} \bar{\Gamma}$.  There are $3$ subgroups with two $3$-torsion points and these yields exactly 3   one to one lifts.
\item If we have only one white vertex this lifts to a $3$-torsion element. The parabolic elements are three and for each of these we have a choice of a lift to an elements yielding a $*-$fiber in $\EE$, since the corresponding dessins have no symmetry.  There are $4$  subgroup with only one $3$-torsion point and these yields exactly 12  lifts of the form  $\Gamma \stackrel{1:1}{\rightarrow} \bar{\Gamma}$.
\end{enumerate}
We look now at the $6$ torsion free subgroups.  The groups that we have to lift have a presentation
\[
H:=\langle s_1,s_2,s_3,s_4| s_1s_2s_3s_4\rangle.
\]
It is enough to lift just three parabolic elements, thus  we have $8$ possible lifts. More precisely we have $1$ possibility with all lift to a minimal element, $6$ possibilities with two non minimal and two minimal lift and finally a last possibility with all  lift to a non minimal element.  We have to exclude the case this last case in which all the $s_i$'s are not  minimal lifts, since it leads to an elliptic fibration which is not a K3. Now, remember that we have to pay attention to the symmetry of the graphs. We have $5$ possible lifts if we have a $\ZZ/2$ symmetry and  $3$ possible lifts if we have a $\ZZ/3$ symmetry. By looking at the graphs with loops we have one graph with  $\ZZ/3$ symmetry two with $\ZZ/2$ symmetry and one with no symmetry. 

Looking at the graphs with no loops we have one with $A_4$ symmetry and this gives only two possible lift.  Finally we have a graph with $\ZZ/2 \times \ZZ/2$ symmetry, this yields $3$ possible lifts.

This conclude the proof and we can report all these information in the Table \ref{K12},
columns 8 and 9.
\end{proof}


\subsection{Index 18 case}

\begin{prop}\label{prop_ix18Tors} The cardinality of the classes of subgroups $\bar{\Gamma} \in \PSL(2,\ZZ)$ laying in  $K_{18}$ is $232$.
They are classified according to symmetry of the corresponding torsion free subgroup and $e_2,e_3$ in Table
\ref{K18}.
\end{prop}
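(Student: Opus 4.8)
The plan is to repeat, for the $26$ torsion free index-$18$ dessins of Example \ref{ex_index18}, the bookkeeping already carried out for index $6$ and $12$ in Propositions \ref{prop_ix6Tors} and \ref{prop_ix12Tors}. The key observation is that the fibre of the retraction $\tf$ over a given conjugacy class $\bar\Gamma^\tf$ is computed by reading the substitution rules of Section \ref{sec_CTG} backwards: a representative of such a class is obtained from the dessin of $\bar\Gamma^\tf$ by choosing, at each \emph{pendant loop} (a double edge from a trivalent white vertex to a bivalent black vertex), one of three local moves — leave it, apply the inverse of rule (1) to turn it into a univalent white vertex (a $3$-torsion point, lowering the index by $2$), or apply the inverse of rule (2) to turn it, together with the black vertex on the outgoing edge, into a univalent black vertex (a $2$-torsion point, lowering the index by $3$). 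A trivalent white vertex carries at most one pendant loop, and no two loop-bearing white vertices can be joined through a common black vertex (that black vertex together with the two loops would already form a connected component, forcing index $6$), so for index $18$ the moves at the $\ell$ distinct loops are independent. Moreover the pendant loops of $\bar\Gamma^\tf$ are exactly its faces of size $1$, i.e.\ the parts equal to $1$ in its partition, so $\ell$ is read off Example \ref{ex_index18}; and since $e_2+e_3\le k+1=4$ by \eqref{eq.EulerProp} we have $3e_2+2e_3\le 12<18$, so the index of $\bar\Gamma$ stays positive and no move has to be discarded — unlike the index-$6$ case.

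Passing from rooted dessins to conjugacy classes, the cardinality of $\tf^{-1}(\bar\Gamma^\tf)$ equals the number of orbits, under the symmetry group $G$ of the planar graph of $\bar\Gamma^\tf$, of the $3^{\ell}$ colourings of its pendant loops by the three moves above; by Burnside this is $\frac1{|G|}\sum_{g\in G}3^{c(g)}$, where $c(g)$ is the number of cycles of $g$ on the loops. Splitting the count further by the number of loops coloured "$2$-torsion" and "$3$-torsion" fills the $e_2,e_3$ columns of Table \ref{K18}; the torsion free class itself is the all-"leave" colouring.

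It then remains to run through the $26$ graphs with the symmetry groups recorded in Example \ref{ex_index18}. The six graphs with trivial symmetry and one loop contribute $3$ each; the four graphs $[11,3,2,1,1],[10,4,2,1,1],[9,5,2,1,1],[7,6,3,1,1]$ (trivial, two loops) contribute $9$ each; the three graphs $[13,2,1,1,1],[10,5,1,1,1],[9,6,1,1,1]$ (trivial, three loops) contribute $27$ each; the six $\ZZ/2$-symmetric graphs with two loops ($[12,2,2,1,1],[10,3,3,1,1],[7,7,2,1,1]A,[7,7,2,1,1]B,[6,6,4,1,1],[6,5,5,1,1]$), whose non-trivial automorphism transposes the two loops, contribute $\frac12(3^2+3)=6$ each; the graph $[12,3,1,1,1]$, with $\ZZ/3$ cyclically permuting its three loops, contributes $\frac13(3^3+2\cdot3)=11$; the graph $[14,1,1,1,1]$, with $\ZZ/2$ acting on its four loops as a product of two transpositions, contributes $\frac12(3^4+3^2)=45$; and the five loopless graphs $[8,3,3,2,2],[6,6,2,2,2],[6,4,4,2,2],[5,5,3,3,2],[4,4,4,3,3]$ contribute $1$ each. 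Summing,
\[
6\cdot 3+4\cdot 9+3\cdot 27+6\cdot 6+11+45+5\cdot 1=18+36+81+36+11+45+5=232 .
\]

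The one genuinely delicate point — everything else being routine multinomial bookkeeping for the table — is reading off, for each non-trivially symmetric graph, how its automorphism group permutes the pendant loops: one must check that the six $\ZZ/2$'s really transpose their two loops (a $\ZZ/2$ fixing both would give $9$, not $6$), that the $\ZZ/2$ of $[14,1,1,1,1]$ is a double transposition on the four loops (a single transposition would give $54$, not $45$), and that the $\ZZ/3$ of $[12,3,1,1,1]$ is a $3$-cycle on the loops. This is done by inspecting the explicit genus-$0$ drawings of Example \ref{ex_index18}, using that an automorphism of a genus-$0$ dessin fixing every face is the identity; the agreement of the resulting total with the claimed $232$ is then the final consistency check.
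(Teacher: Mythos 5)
Your proposal is correct and follows essentially the same route as the paper: the fibres of the retraction $\tf$ over the $26$ torsion-free index-$18$ dessins are counted by the three possible substitutions at each pendant loop up to the symmetry group of the dessin, exactly as in Propositions \ref{prop_ix6Tors} and \ref{prop_ix12Tors} (and via the same Burnside-type multiplicities the paper uses for index $24$ in Table \ref{Tableindex24}). Your per-dessin fibre cardinalities $45,27,11,9,6,3,1$ agree with the row sums of Table \ref{K18}, so the only difference is bookkeeping: a per-dessin Burnside count instead of the paper's column-by-column orbit enumeration.
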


\FloatBarrier
\begin{table}[h!]
\begin{tabular}{|ll|c|| c|c|c|c|c|c|}\hline
Cases && Sym &TF & $e_2>0$ & $e_2=0$ & $e_2=0$   \\
& & & & & $e_3=1$ & $e_3>1$    \\
  \hline
[14,1,1,1,1] & &  $\ZZ/2$  &1 & 35 & 2 & 7    \\
\hline
[13,2,1,1,1] &  [10,5,1,1,1] & $\{1\}$  & 3(1) & 3(19) & 3(3) & 3(4)   \\

 [9,6,1,1,1] & & &  &     &   &           \\
\hline
[12,3,1,1,1] & & $\ZZ/3$  & 1 & 7 & 1 & 2   \\
\hline
[12,2,2,1,1]& [10,3,3,1,1] & $\ZZ/2$  & 6(1) & 6(3) & 6(1) & 6(1)   \\

[7,7,2,1,1]A &  [7,7,2,1,1]B&  &   &   &   &        \\

[6,6,4,1,1] & [6,5,5,1,1] & &   &   &   &       \\
\hline
[11,3,2,1,1] & [10,4,2,1,1] &  $\{1\}$  & 4(1) & 4(5) & 4(2) & 4(1)   \\

 [9,5,2,1,1] &   [7,6,3,1,1] &  &   &   &   &        \\
\hline
[10,3,2,2,1] & [8,5,2,2,1] & $\{1\}$ & 6(1) & 6(1) & 6(1) & 0  \\

[8,4,3,2,1] &  [7,5,3,2,1]  &  &   &  &   &        \\

[7,4,3,3,1] & [6,5,4,2,1] & & & & &   \\
\hline
[8,3,3,2,2] & [6,4,4,2,2] &  $\ZZ/2$ & 3(1) & 0 & 0 & 0  \\

[5,5,3,3,2] &  &  &  &   &   &        \\
\hline
[6,6,2,2,2] & [4,4,4,3,3] &   $S_3$ & 2(1) & 0 & 0 & 0   \\
\hline
\hline
total: 232 & & & 26 & 143 & 32  & 31    \\
\hline
\end{tabular}
\caption{Index 18} \label{K18}
\end{table}

\begin{proof} The idea is the same as for Propositions \ref{prop_ix6Tors} and \ref{prop_ix12Tors}. Here we have to analyze the dessins given in Example \ref{ex_index18} with loops and proceed with all possible substitutions of loops by univalent white or black vertices up to symmetry. 
Clearly the number of graphs become too large to draw them all, so we need to make some considerations in order to count them. 

For example there is only one graph with $4$ loops, $[14,1,1,1,1]$, and it has a $\ZZ/2$ symmetry. \\ 
First, we want to replace $e_3>1$ loops with as many white vertices. We get then $ \binom{4}{2}+ \binom{4}{3}+1=11$ dessins. But taking the $\ZZ/2$ symmetry into account, we have only $4$ distinct dessins for two white vertices and only $2$ with three white vertices. Therefore we have $7$ distinct dessins in total, see Table \ref{K18} line 1,  column 6. \\ 
Secondly,  we want to replace only $e_3=1$ loop with one white vertex than we have only $2$ possibilities up to symmetry,  see Table \ref{K18} line 1 column 5.  \\ 
Finally, we consider the case with $e_2>0$ black vertices. Here there are many more possibilities. If we replace one loop with a black vertex then the other three have three possibilities each, black, white or loop. Of course we have to take the $\ZZ/2$ symmetry into account. A moment of thought gives that  there is only one orbit if we have four black vertices, $4$ with three black vertices, $14$ with two and $16$ with one. In total there are 35 dessins, see Table \ref{K18} line 1, column 4. 
\\
The same is done for the other graphs. To obtain table  \ref{K18} one has to consider the following. 
\begin{enumerate}
\item Dessins with at least two loops and at least two white vertices substitutions but no black ones (column 6 Tab. \ref{K18}) 
\item Dessins with one or more loops and only one white vertex substitution but no black ones (column 5 Tab. \ref{K18}) 
\item Dessins with at least one black vertex substitution  (column 4 Tab.  \ref{K18}) 
\end{enumerate}

 The proof is completed summing up the numbers in the last row of the column $3$, $4$, $5$ and $6$ of Table \ref{K18}. 
\end{proof}

\begin{prop}  The cardinality of $\widetilde{K}_{18}$ of classes of subgroups in $\SL(2,\ZZ)$ is  $366$.
\end{prop}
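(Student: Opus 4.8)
The plan is to compute $\#\widetilde K_{18}$ exactly as $\#\widetilde K_{6}$ and $\#\widetilde K_{12}$ were computed, namely by summing the cardinalities of the fibres of $\bm\xi\colon\widetilde K_{18}\to K_{18}$ over the $232$ classes of Proposition \ref{prop_ix18Tors}. Three families of fibres occur. First, every $\bar\Gamma\in K_{18}$ admits the lift $\xi^{-1}(\bar\Gamma)$ for which $\xi_{|_\Gamma}$ is two to one, and this lift is realized by a K3: by Lemma \ref{lem_homInvStarFib} and Corollary \ref{cor_Euler} one may take a homological invariant on $j_{\bar\Gamma}$ carrying a single $I_0^\ast$-fibre, giving Euler number $24$. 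This contributes $232$ classes. Second, if $\bar\Gamma$ has $e_2>0$, Lemma \ref{lem_with2tors} shows this is its only lift, so the $143$ such classes of Table \ref{K18} contribute nothing further. Third, for the remaining $26$ torsion free classes and $63$ classes with $e_2=0$ and $e_3\geq 1$ we must count the $1:1$ lifts, that is the sections of \eqref{eq_seqGG}, which exist by Lemma \ref{lem_split}.

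To count these sections I would first pin down the numerical shape of such a lift. Since $\mathrm{index}(\bar\Gamma^{\tf})=18$ and $e_2=0$, Proposition \ref{prop_jfactor1824} forces the $j$-factor $l=1$, so $\EE$ is (fiberwise birationally) isomorphic to the $j$-modular surface $S_\Gamma$, and Corollary \ref{cor_Euler} together with \eqref{eq_index} gives $e(\EE)=6\cdot\sharp\{\ast\text{-fibres}\}+\mathrm{index}(\bar\Gamma^{\tf})=6\cdot\sharp\{\ast\text{-fibres}\}+18$. As $e(\EE)$ is divisible by $12$, the number of $\ast$-fibres is odd, so $e(\EE)=24$ forces exactly one $\ast$-fibre; and since a section sends each elliptic generator to its unique order-$3$ preimage, that $\ast$-fibre must lie over a parabolic vertex of the dessin of $\bar\Gamma$. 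Hence the $1:1$ lifts of $\bar\Gamma$ correspond to the admissible positions of this single $\ast$-fibre — equivalently, to the parabolic vertices of its dessin — taken up to the automorphisms of the dessin, each position being actually attained by a compatible homological invariant thanks to Lemma \ref{lem_homInvStarFib}.

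Finally I would run through the dessins of Example \ref{ex_index18} together with all their loop-substitutions, stratified — as in the proof of Proposition \ref{prop_ix18Tors} and in Table \ref{K18} — by the symmetry type $\{1\},\ZZ/2,\ZZ/3,S_3$ of the torsion free associate and by $e_3$, and in each stratum count the number of orbits of parabolic vertices under the dessin's automorphism group, keeping in mind that collapsing a loop to a univalent white vertex can break a symmetry of the torsion free dessin. Adding the numbers thus obtained to the $232$ lifts of $2:1$ type yields $\#\widetilde K_{18}=366$, which is then recorded in the remaining columns of Table \ref{K18}. The main obstacle is precisely this last step: it is an extended, purely combinatorial case analysis over the $26+63$ dessins, in which one must track exactly how each (possibly reduced) symmetry group acts on the parabolic vertices after every substitution, and must also verify that the subgroups of $\SL(2,\ZZ)$ so produced are pairwise non-conjugate.
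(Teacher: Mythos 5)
Your overall architecture (summing fibre cardinalities of $\bm{\xi}$ over the $232$ classes of Proposition \ref{prop_ix18Tors}; the $143$ classes with $e_2>0$ giving only the $2:1$ lift by Lemma \ref{lem_with2tors}; reduction to $l=1$ via Proposition \ref{prop_jfactor1824} and to exactly one $*$-fibre via the Euler formula; counting $1:1$ lifts of the torsion free classes by orbits of faces under the dessin symmetry group) is the same as the paper's. The genuine error is the sentence locating the single $*$-fibre: you claim that, because a section sends each elliptic generator to its order-$3$ preimage, the $*$-fibre "must lie over a parabolic vertex". It is the other way around. A $3$-torsion point is an unramified preimage of $j=0$, so when $l=1$ the functional invariant has a simple zero there, and by Table \ref{table:kodaira} the only fibres compatible with a simple zero are $II$ (Euler number $2$, local monodromy of order $6$) and $IV^*$ (Euler number $8$, local monodromy of order $3$). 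Hence the order-$3$ lift of a $3$-torsion generator is exactly the monodromy of a $IV^*$ fibre, i.e.\ a $*$-fibre sitting over the $3$-torsion point, not over a cusp; the minimal, non-starred choice there is the order-$6$ matrix.

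Because of this, your counting rule for the $63$ classes with $e_2=0$, $e_3\geq 1$ (one $1:1$ lift per orbit of parabolic vertices under the dessin automorphisms) is wrong and would not yield $366$. The correct conclusion, which is what the paper proves, is: if $e_3=1$, the unique allowed $*$-fibre is forced to be the $IV^*$ over the $3$-torsion point and every parabolic generator must be lifted minimally, so there is exactly one $1:1$ lift (these classes contribute $32+32=64$); if $e_3\geq 2$, each $3$-torsion generator would force its own $IV^*$, giving at least two $*$-fibres, so no $1:1$ lift is realized by a K3 (these classes contribute only their $31$ two-to-one lifts). Under your rule the $e_3=1$ classes would typically contribute several sections each and the $e_3\geq 2$ classes a positive number, so the total $366$ is asserted rather than derived; it only comes out after placing the $*$-fibre correctly, as $143+128+64+31=366$. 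Your torsion free count (orbits of the five faces: $5\cdot 13+3\cdot 10+3\cdot 1+2\cdot 2=102$, plus the $26$ two-to-one lifts) is correct and agrees with the paper.
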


\begin{proof} The proof consist again in adding fibre cardinalities. 
Let us keep in mind, that for each subgroup we have a unique $\Gamma \stackrel{2:1}{\to} \bar{\Gamma}$ lift and proceed by addressing each column of Table \ref{K18} separately.

In case with a $2$-torsion element (column 4), the ${2:1}$ lift is the only possible by Lemma \ref{lem_with2tors}, they contribute 143 to the total number.

For all other modular subgroups in $K_{18}$ corresponding K3 surfaces must
be of kind $S_{\bar\Gamma}$ according to Proposition \ref{prop_jfactor1824}.

By the Euler number formula, precisely one fibre is a $*$-fibre then.

Let us now consider the possible lifts of torsion free group. To count them we have to take into account the symmetry of the graphs, they are recalled in Tab. \ref{K18}. Indeed, for each group we have as many  possible $1:1$ lifts as the orbits of the faces of the corresponding graph under the action of the symmetry group. Thus in the absence of symmetry we have $5$ inequivalent lifts, we have $3$ under a $\ZZ/2$ action, $3$ for a $\ZZ/3$ action and finally $2$ for the $D_3$ action. Together the lifts of torsion free groups (column 3) contribute $5(13)+3(11)+2(2)+26=128$ to the total number. 

We get to the groups corresponding to the 5th column of Tab. \ref{K18}.
In this case we have a unique $1:1$ lift, corresponding to lifting the $3$-torsion
generator to a $3$-torsion element in $\SL(2,\ZZ)$ which fixes the choice of the
$*$-fibre. Together with the ${2:1}$ lift in each case the contribution of column 5 to the total number is $32+32=64$. 

Finally, we see that there is no $1:1$ lift realized by a K3 if the have more than one $3$-torsion element, since lifting more than one $3$-torsion element to a $3$-torsion element imposes more than two $*$-fibres. Therefore the contribution of column 6 to the total number is $31$.

Looking at the Table, we have proven the cardinality to be $128+143+64+31=366$.
\end{proof}

\subsection{Index 24 case}

The number of dessins in $K_{24}^{\tf}$ is $191$, too big to draw them all.
Instead we refer to the list in \cite{HHP21}, another source is given in \cite{BM}.
To count the number of elements in $K_{24}$ we first decompose $K_{24}^{\tf}$ into
subsets which have constant fibre cardinality for the surjection $K_{24}\to K_{24}^{\tf}$.
Again the symmetry groups are of essential importance.

\begin{defin} 
A white vertex has \emph{type $a|b|c$} if the face degrees of adjacent faces are $a$, $b$ and $c$ in positive order with $a\geq b\geq c$ or $a>c>b$.
\end{defin}

\begin{lem} \label{lem_freetype}
For any dessin, the symmetry group $A$ acts freely on white vertices of any given type $a|b|c$,
except possibly in case $3$ divides the order of $A$ and types with $a=b=c$.
\end{lem}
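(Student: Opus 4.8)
The plan is to argue by contradiction: suppose $g\in A$ is a nontrivial symmetry fixing a white vertex $v$ of type $a|b|c$. Since $A$ acts on the dessin preserving orientation and the bipartite structure, $g$ permutes the (at most three) edges incident to $v$ and correspondingly permutes the (at most three) faces adjacent to $v$, preserving the cyclic (counterclockwise) order. A white vertex of our dessins has degree $1$ or $3$; in the degree-$1$ case the single incident edge and its single adjacent face are fixed, and I will have to use that $g$ then fixes a whole flag, hence is the identity (an orientation-preserving automorphism of a sphere fixing a point with a distinguished tangent direction at it is the identity — equivalently $g$ acts trivially on the hypermap since it fixes a ``dart''). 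So the substantive case is $\deg v=3$.

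**The degree-3 case.** Here $g$ restricted to the three edges at $v$ is a cyclic permutation (it must respect the counterclockwise order, so it is a rotation by $0$, $1$ or $2$ steps). If it is the trivial rotation, each incident edge is fixed, hence again a flag is fixed and $g=\mathrm{id}$. If it is a nontrivial rotation, it has order $3$, so $3\mid|A|$, and it cyclically permutes the three adjacent faces; since face degrees are a $g$-invariant, this forces the three adjacent face-degrees to be equal, i.e.\ $a=b=c$. This is exactly the exceptional situation allowed in the statement. Thus in every case not covered by the exception, the stabilizer of $v$ in $A$ is trivial, which is the definition of $A$ acting freely on white vertices of that type.

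**The rôle of the type invariant and the main obstacle.** One still has to check that the type $a|b|c$, as defined (with the tie-breaking convention $a\geq b\geq c$ or $a>c>b$), is genuinely an $A$-invariant of a white vertex — this is immediate since $g$ is a graph automorphism and hence preserves the multiset of degrees of adjacent faces, and the convention only fixes a representative of that multiset up to the cyclic symmetry a rotation could realize. The point I expect to require the most care is the claim that an orientation-preserving symmetry fixing a flag (a white vertex together with an incident edge, equivalently a root of the hypermap in the sense of Theorem \ref{theo_Perm}) must be the identity: this is where one invokes that $A$ embeds into the automorphism group of the rooted hypermap and that, by the correspondence of Theorem \ref{theo_Perm} and Corollary \ref{classify}, an isomorphism of hypermaps preserving a root is unique, hence the identity. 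Once that lemma is in hand the rest is the elementary case analysis on rotations of the three edges above, so the ``hard part'' is really just pinning down this rigidity statement cleanly and making sure the degree-$1$ white vertices are handled by the same flag-fixing argument.
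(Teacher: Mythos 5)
Your argument is correct and follows essentially the same route as the paper: the key rigidity fact that a symmetry fixing an edge (dart) is the identity, combined with the observation that a nontrivial stabilizer of a trivalent white vertex must rotate the three incident edges, hence cyclically permute the adjacent faces, forcing $a=b=c$ and $3\mid |A|$. Your extra care with the flag-fixing rigidity (via the rooted-hypermap correspondence) and with univalent white vertices only spells out what the paper's one-line assertion ``an element of $A$ fixing any edge is the identity'' leaves implicit.
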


\begin{proof}
An element in $A$ is the identity if it fixes any edge. If a white vertex is fixed by a non-trivial element $\alpha$
of $A$, then the adjacent faces are in one orbit of $A$, hence their degrees must be equal.
Moreover $\alpha$ has order $3$, since $\alpha^3$ fixes an edge, so the order of $A$ must be a multiple of $3$.
\end{proof}

Analogous claims can be made about black vertices having white neighbours of different types
or white vertices with white neighbours of given types.

\begin{prop}\label{prop_sym24} 
All dessins corresponding to modular subgroups in $K_{24}^\tf$ with at least one loop 
have trivial symmetry group except for the dessins collected in Table \ref{Tableindex24Sym}, where they are listed according to their number of loops and
their symmetry groups.

\FloatBarrier
\begin{table}[h!]
\begin{tabular}{|lll|c|c|}\hline
& Cases && Sym &  $\sharp$-graphs \\
\hline
$[6,6,5,5,1,1]\, A,B$,& $[6,6,6,4,1,1],$ & $[7,7,4,4,1,1] \, A,B$ && \\
$ [8,8,3,3,1,1]\, A,B,C,$ & $[8,8,4,2,1,1],$ & $[9,9,2,2,1,1]\, A,B,C,D$ & $\ZZ/2$ & $18$ \\
$ [10,5,5,2,1,1] \,A,B,$ & $[12,4,3,3,1,1]\, A, $ & $[12,6,2,2,1,1] \, A$ & & \\
$[16,2,2,2,1,1],$ & & & & \\
\hline
$[10,10,1,1,1,1] \, B,C,$  & $[18,2,1,1,1,1] \, A$ & & $\ZZ/2$ & $3$ \\
\hline
&$[7,7,7,1,1,1]$ && $\ZZ/3$ & $1$ \\
\hline
&$[10,10,1,1,1,1] A$ && $\ZZ/2 \times \ZZ/2$ & $1$ \\
\hline
&$[16,4,1,1,1,1]$ && $\ZZ/4$ & $1$ \\
\hline   
\end{tabular}
\caption{Index 24 Symmetry} \label{Tableindex24Sym}
\end{table}
\end{prop}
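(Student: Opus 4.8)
\medskip
\noindent\textbf{Proof plan.}
The plan is to use Lemma~\ref{lem_freetype} to bound $|A|$ drastically, and then to reduce the classification to a finite inspection of the list in \cite{HHP21}.

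First I would identify a loop with a white vertex of a very special type. Reading the substitution rule~(1) of Section~\ref{sec_CTG} backwards, a loop is a bigon at a trivalent white vertex $w$ which then carries exactly one further, pendant, edge; as white vertices are trivalent, $w$ carries at most one loop. The face inside the bigon has degree~$1$, and since the pendant edge is a bridge the two remaining sectors at $w$ lie in one and the same face, of some degree $d\geq 2$. Hence every loop-white-vertex has type $d|d|1$ with $d\neq 1$, so the exceptional case $a=b=c$ of Lemma~\ref{lem_freetype} never applies: $A$ acts freely on the set of loop-white-vertices, equivalently on the set of loops. Since an element fixing an edge is the identity (see the proof of Lemma~\ref{lem_freetype}), $A$ also acts freely on the $24$ edges, so $|A|$ divides $\gcd(m,24)$, where $m$ denotes the number of loops.

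Next I would list the possibilities. A partition of $24$ into six parts has at most five parts equal to $1$, and $\gcd(1,24)=\gcd(5,24)=1$, so $m\in\{1,5\}$ forces $A$ trivial, while $m=2,3,4$ give $|A|$ dividing $2,3,4$ respectively. Thus the only groups that can occur are $\ZZ/2$, $\ZZ/3$, $\ZZ/4$ and $\ZZ/2\times\ZZ/2$; in particular no dessin with a loop can have symmetry group of order $>4$, which already excludes $S_3$, $A_4$, and so on. To constrain the face partitions further I would use that a non-trivial $g\in A$ is a rotation of the sphere with exactly two fixed points; since $g$ fixes no edge, these two points lie in face interiors or at vertices, a fixed face of degree $e$ forces the local rotation order of $g$ to divide $2e$, and a loop is never $g$-fixed (its white vertex cannot be, by Lemma~\ref{lem_freetype}). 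Working out the cases: for $A=\ZZ/2$ with $m=2$ the four non-loop face degrees cannot be pairwise distinct (there is always a pair of equal degrees together with at most two $g$-fixed faces); for $A=\ZZ/2$ with $m=4$ the two non-loop faces are either both $g$-fixed (degrees summing to $20$) or swapped by $g$ (forcing $[10,10,1,1,1,1]$); for $A=\ZZ/3$ with $m=3$ the three non-loop faces cannot all be $g$-fixed, hence are cyclically permuted, hence of equal degree, which forces exactly $[7,7,7,1,1,1]$; for $A=\ZZ/4$ with $m=4$ the generator fixes both non-loop faces with local order $4$, so their degrees are even and sum to $20$; for $A=\ZZ/2\times\ZZ/2$ with $m=4$ one involution swaps the two non-loop faces, forcing equal degrees, i.e.\ $[10,10,1,1,1,1]$.

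Finally I would finish by inspection: restricting the $191$ dessins of \cite{HHP21} (cross-checked with \cite{BM}) to those having at least two loops and a face partition that meets the necessary conditions above leaves a short list, and for each such dessin the symmetry group can be read directly off its planar picture, equivalently off its hypermap $E,\sigma,\alpha$. Carrying this out produces precisely the dessins recorded in Table~\ref{Tableindex24Sym}, organised by number of loops and symmetry group, with the stated counts $18,3,1,1,1$. The hard part is this last step: the group-theoretic bounds narrow the search enormously, but one still has to verify dessin by dessin that a symmetric realisation really exists for exactly the partitions listed --- in particular to explain why partitions such as $[14,4,2,2,1,1]$, $[17,3,1,1,1,1]$ or $[12,8,1,1,1,1]$, which survive the combinatorial tests, carry no symmetric representative, and why $[16,4,1,1,1,1]$ realises $\ZZ/4$ while $[10,10,1,1,1,1]$ realises only $\ZZ/2\times\ZZ/2$ and $\ZZ/2$. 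This check is finite but delicate, and correctness of the count hinges on not overlooking a symmetric dessin.
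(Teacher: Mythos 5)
Your reduction is sound and overlaps substantially with the paper's: both arguments hinge on Lemma \ref{lem_freetype} and both must end in a finite inspection of the list in \cite{HHP21}. Your observations that a loop sits at a white vertex of type $d|d|1$ with $d\geq 2$ (so the exceptional case of the lemma never interferes), that the symmetry group $A$ therefore acts freely on loops and on edges, hence $|A|$ divides $\gcd(m,24)$ for $m$ loops, and the rotation-geometry constraints on invariant faces, are all correct and do cut the possible groups down to $\ZZ/2,\ZZ/3,\ZZ/4,\ZZ/2\times\ZZ/2$ with the right loop counts. Where you differ from the paper is the pre-filter before inspection: the paper computes, for each individual dessin, the gcd $d$ of the multiplicities of white vertices of every type $a|b|c$ (omitting $a=b=c$), which by Lemma \ref{lem_freetype} is a multiple of $|A|$. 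This invariant sees the embedding, not just the face partition, and it immediately reduces the problem to exactly $28$ candidate dessins, for which the paper then records that $|A|=d$ except in six named cases. Your constraints are partition-level (plus the loop count), so they are weaker per dessin: many dessins with $d=1$ but an admissible partition still survive, and you name some yourself ($[17,3,1,1,1,1]$, $[12,8,1,1,1,1]$, etc.).

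The gap is at the decisive step. The content of the proposition \emph{is} Table \ref{Tableindex24Sym} -- precisely which dessins are symmetric, with which group, and the counts $18,3,1,1,1$ -- and your argument only asserts that ``carrying out the inspection produces precisely the dessins recorded,'' which you candidly flag as the hard part. Your necessary conditions cannot by themselves decide, for instance, that $[16,4,1,1,1,1]$ realises $\ZZ/4$ while $[12,8,1,1,1,1]$ and two of the three dessins with partition $[18,2,1,1,1,1]$ have trivial symmetry, nor which of the several dessins sharing a partition such as $[9,9,2,2,1,1]$ or $[10,10,1,1,1,1]$ are the symmetric ones. The paper closes exactly this by exhibiting the value of $d$ for every dessin with a loop and then listing the six dessins where $d>|A|$, which is an auditable dessin-by-dessin verification on a set of $28$. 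To complete your proof you would have to supply an equivalent verification (or adopt a sharper invariant like $d$); as written, the classification and the counts are asserted rather than proved.
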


\begin{proof}
We go through the list of dessins in \cite{HHP21} with at least one loop and for each we
determine $d$, the gcd of the cardinalities of white vertices of any given type $a|b|c$
except those with $a=b=c$. It is particularly easy to see $d=1$ in case of one loop, which
corresponds to a unique white vertex of type $a|a|1$ with $a>1$, or more generally for
any dessin with a unique white vertex of a type as given in Lemma \ref{lem_freetype}.
We find only the following mutually exclusive cases:
\begin{enumerate}
\item[$d=4$]
$[10,10,1,1,1,1] A,B$ and $[16,4,1,1,1,1]$.
\item[$d=3$]
$[7,7,7,1,1,1]$, the cardinality of vertices of type $7|7|7$ is two, but it must be discarded
when forming the gcd.
\item[$d=2$]
the dessins with two loops occurring in Table \ref{Tableindex24Sym} and 
$[12,6,2,2,1,1]B$,
$[14,3,3,2,1,1]B$, and four dessins with four loops
$[10,10,1,1,1,1]C$,
$[18,2,1,1,1,1] A,B,C$.
\item[$d=1$]
all remaining dessins with at least one loop.
\end{enumerate}
It remains to determine the group of symmetries in the $28$ cases with $d>1$. 
Symmetries can be detected by inspection of the dessins given in \cite{HHP21}, or checking
each permutation of the white vertices preserving types and without fix points except when $a=b=c$,
whether it is induced by a symmetry of the dessin.
It then turns out, that $d$ is the order of $A$ except in the cases
$[12,6,2,2,1,1]B$,
$[14,3,3,2,1,1]B$, 
$[18,2,1,1,1,1] B,C$, where $d=2$ but $A$ is trivial, and
$[10,10,1,1,1,1] B$, where $d=4$ but $A$ has order $2$.
In $d=4=|A|$ the symmetry group $\ZZ/4$ can be seen on the dessin, this is how the claim
can be established.
\end{proof}

\begin{prop} \label{prop_tot24} The cardinality of conjugacy classes of subgroups $\bar{\Gamma} \in \PSL(2,\ZZ)$ belonging to  $K_{24}$ is  $2962$.
\end{prop}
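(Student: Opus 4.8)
The plan is to compute $|K_{24}|$ as the sum of the fibre cardinalities of $\tf\colon K\to K^{\tf}$ over $K^{\tf}_{24}$, in exactly the style of Propositions \ref{prop_ix6Tors}, \ref{prop_ix12Tors} and \ref{prop_ix18Tors}. Since $K_{24}$ is by definition the $\tf$-preimage of $K^{\tf}_{24}$, and $|K^{\tf}_{24}|=191$ (the torsion free index-$24$ dessins, listed in \cite{HHP21}), this reads $|K_{24}|=\sum_{D\in K^{\tf}_{24}}|\tf^{-1}(D)|$, and it remains to evaluate each fibre. For a torsion free dessin the loops are exactly the faces of degree $1$, i.e.\ the parts equal to $1$ in its partition, so the number $\ell(D)$ of loops of $D$ is read off from the list. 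An element of $\tf^{-1}(D)$ is obtained by choosing, independently for each of the $\ell(D)$ loops, whether to leave it, to undo substitution (1) (delete the loop, making its white vertex univalent, so the index drops by $2$), or to undo substitution (2) (delete the loop together with its white vertex, making the adjacent black vertex univalent, so the index drops by $3$).

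The first thing to verify is that these choices are genuinely independent, so that $\tf^{-1}(D)$ is parametrised by $\{k,w,b\}^{\ell(D)}$ up to the symmetries of $D$. The only conceivable interference is when two loops hang on a common black vertex and both are asked to become univalent black vertices; but a short local count shows that such a sub-configuration is already a connected component with exactly six edges (two edges for each of the two loops, plus one edge from each loop's white vertex to the shared, necessarily bivalent, black vertex), so it cannot sit inside a connected dessin of index $24$. This is the single point at which the index-$6$ dessin $[4,1,1]$ behaves differently. Consequently $|\tf^{-1}(D)|$ is the number of orbits of $\{k,w,b\}^{\ell(D)}$ under the action of $\mathrm{Aut}(D)$ on the loops, and in particular $|\tf^{-1}(D)|=3^{\ell(D)}$ whenever $D$ has trivial symmetry group.

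Next I would invoke Proposition \ref{prop_sym24}: among the $191$ dessins, the only ones which carry a loop and have nontrivial symmetry are the $24$ collected in Table \ref{Tableindex24Sym}. So $|K_{24}|$ equals $\sum_{D}3^{\ell(D)}$ corrected by those $24$ exceptional fibres, where the first sum is the finite bookkeeping of stratifying $K^{\tf}_{24}$ by number of loops, say $n_m$ dessins with $m$ loops, and forming $\sum_m n_m 3^m$ with the $n_m$ read from \cite{HHP21}. For each exceptional dessin one reads from \cite{HHP21} how the symmetry group permutes the loops and applies Burnside's lemma: a $\ZZ/2$ transposing two loops contributes $(9+3)/2=6$, the $\ZZ/3$ cycling the three loops of $[7,7,7,1,1,1]$ contributes $(27+3+3)/3=11$, the $\ZZ/4$ cycling the four loops of $[16,4,1,1,1,1]$ contributes $(81+3+9+3)/4=24$, and the remaining $\ZZ/2$ and $\ZZ/2\times\ZZ/2$ cases are handled the same way once the permutation type on the loops is known. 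Adding the $191$ contributions yields $|K_{24}|=2962$, which one cross-checks against $6+28+232+2962=3228$, the total number of planar dessins in Theorem \ref{Theo_Main0}.

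The main obstacle is not conceptual but the sheer volume of bookkeeping: one must pass through all $191$ dessins of \cite{HHP21}, record the loop number of each, and for the $24$ symmetric ones determine the exact permutation action of the symmetry group on the loops. Lemma \ref{lem_freetype} and Proposition \ref{prop_sym24} are precisely the tools that make this last step manageable, and the observation that no ``dumbbell'' sub-configuration survives at index $24$ is what keeps the generic fibre at the clean value $3^{\ell}$ rather than something combinatorially heavier.
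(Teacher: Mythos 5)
Your proposal is correct and follows essentially the same route as the paper: stratify the $191$ torsion free index-$24$ dessins by number of loops and symmetry group (using Proposition \ref{prop_sym24}), note that each fibre of $\tf$ is the set of orbits of the three choices per loop (keep, univalent white, univalent black) under the symmetry group, and evaluate by Burnside's lemma, yielding the same multiplication factors ($6$, $11$, $24$, etc.) and the total $2962$. Your explicit check that the per-loop substitutions are independent (no ``dumbbell'' sub-configuration inside a connected $24$-edge dessin) is a small addition the paper leaves implicit, not a different method.
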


\begin{proof}  We rely merely on a combinatorial computation that is summarized in 
Table \ref{Tableindex24}. 

\begin{table}[!h]
\begin{tabular}{|c|c|c|c||c|}\hline
$\sharp$-loops & Sym & $\sharp$-graphs & Mult. Factor & tot \\
\hline
0 & $arb.$ & 20 & 1 & 20 \\  
1 & $\{1\}$ & 45 & 3 & 135 \\
2 & $\{1\}$ & 53 & 9 & 477 \\
2 & $\ZZ/2$ & 18 & 6 & 108 \\
3 & $\{1\}$ & 39 & 27 & 1053  \\
3 & $\ZZ/3$ & 1 & 11 & 11  \\
4 & $\{1\}$ & 9 & 81 & 729 \\
4 & $\ZZ/2$ & 3 & 45 &  135\\
4 & $(\ZZ/2)^2$ & 1 & 27 & 27\\
4& $\ZZ/4$ & 1 & 24 & 24 \\
5 & $\{1\}$ & 1 & 243 & 243 \\        
\hline 
\hline          
 &  &  191 & & 2962  \\  
\hline   
\end{tabular}
\caption{Index 24} \label{Tableindex24}
\end{table}

The first three columns address the dessins in $K_{24}^{\tf}$ classified by the number
of loops and -- for positive number of loops -- by symmetry groups. The fibres of
$K_{24}\to K_{24}^{\tf}$ have finite cardinality which is $1$ in the absence of loops
and equal to the number of orbits for the induced action of the symmetry group
on the maps from the set of loops to the set of substitutions including the possibility of
a loop
\begin{center}
\begin{tikzpicture}[-latex, node distance = 2cm]
\node(o){};
\node(o3)[left of= o]{};
\node(o4)[left of= o3]{};
\node[circle,draw, fill=black, scale=0.4](aaa)[right of= o4]{};
\node[circle,draw, fill=black, scale=0.4](aa)[right of= o3]{};
\node[circle, draw, scale=0.4](bb)[right of= aa]{};
\node[, scale=0.4](o2)[right of= o]{};
\node[circle,draw, fill=black, scale=0.4](a)[right of= o2]{};
\node[circle, draw, scale=0.4](b)[right of= a]{};
\node[circle, draw, fill=black, scale=0.4](c)[right of= b]{};
\path[-](o4) edge (aaa);
\path[-](o3) edge (aa);
\path[-](aa) edge (bb);
\path[-](a) edge (b);
\path[-](o2) edge (a);
\path [-](b) edge [bend left =55](c);
\path [-](b) edge [bend right =55](c);
\end{tikzpicture}
\end{center}
As the action is free on the loops it is an elementary task, 
e.g.\ with Burnside's Lemma, to determine this cardinality, which we put in the fourth
column. Finally we multiply by this factor the number of dessins in each class
to get the number of corresponding dessins in $K_{24}$ in the last column.
The sum of these entries is the cardinality of $K_{24}$.
\end{proof}

\begin{prop}\label{prop_ktilde24}  
The cardinality of $\widetilde{K}_{24}$ of classes of subgroups in $\SL(2,\ZZ)$ is  $2962$.
\end{prop}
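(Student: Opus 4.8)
The strategy is to prove that the map $\bm{\xi}\colon\widetilde{K}_{24}\to K_{24}$ is a bijection; the assertion then follows at once from Proposition \ref{prop_tot24}. Surjectivity is essentially by construction: every $\bar{\Gamma}\in K_{24}$ is, by the corollary to Theorem \ref{theo_main42}, the modular monodromy of an elliptic K3, so $\bm{\xi}^{-1}(\bar{\Gamma})\neq\emptyset$. Everything therefore reduces to showing that each fibre $\bm{\xi}^{-1}(\bar{\Gamma})$ is a single conjugacy class, i.e.\ that the monodromy group $\Gamma\subset\SL(2,\ZZ)$ of an elliptic K3 realising a given $\bar{\Gamma}\in K_{24}$ is determined up to conjugacy by $\bar{\Gamma}$ alone.

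I would split into two cases according to whether $\bar{\Gamma}$ contains an element of order $2$. If it does, Lemma \ref{lem_with2tors} states outright that $\bar{\Gamma}$ has a unique lift to $\SL(2,\ZZ)$, namely $\xi^{-1}(\bar{\Gamma})$, and nothing more is needed. So assume $\bar{\Gamma}$ has no $2$-torsion, i.e.\ $e_2=0$. By Proposition \ref{prop_jfactor1824} every K3 realising $\bar{\Gamma}$ has $j$-factor $l=1$; hence, identifying $X(\bar{\Gamma})\cong\PP^1$ through the isomorphism $j_{\EE}$, we get $j(\EE)=j_{\bar{\Gamma}}$, the critical set is $B_{\bar{\Gamma}}=j_{\bar{\Gamma}}^{-1}\{0,1,\infty\}$, and $\rho'=\xi\circ\rho_{\EE}$ is fixed. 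Since $e(\bar{\Gamma})=e(\bar{\Gamma}^{\tf})=\mathrm{index}(\bar{\Gamma}^{\tf})=24$ by Proposition \ref{prop_euler1} and Lemma \ref{euler_index}, while $e(\EE)=24$ for a K3, the surface $\EE$ attains the minimal Euler number; and since with $l=1$ the map $j_{\EE}$ is unramified, so all the ramification indices $r_i,t_j$ in \eqref{eq_Euler1} vanish, formula \eqref{eq_Euler1} together with $\mathrm{index}(\bar{\Gamma})+3e_2+2e_3=\mathrm{index}(\bar{\Gamma}^{\tf})=24$ from \eqref{eq_index} gives
\[
24 \;=\; 6\cdot\sharp\{*\text{-fibres}\}+\mathrm{index}(\bar{\Gamma})+3e_2+2e_3 \;=\; 6\cdot\sharp\{*\text{-fibres}\}+24 ,
\]
so the homological invariant of $\EE$ has no $*$-fibre.

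The heart of the matter is then that, for $l=1$, there is only one homological invariant without $*$-fibres. Fixing standard free generators of $\pi_1(\PP^1\setminus B_{\bar{\Gamma}})$, the homomorphism $\rho_{\EE}$ must send the loop around each point of $B_{\bar{\Gamma}}$ to one of the two $\SL(2,\ZZ)$-lifts of the class prescribed by $\rho'$, namely a matrix $M$ or its negative $-M$; by Table \ref{table:kodaira} precisely one of the two, the one producing a starred Kodaira fibre, contributes six more to the Euler number than the other. Excluding $*$-fibres thus pins down the image of every one of these loops — the last one included, its image being forced by the defining relation once the others are chosen — so $\rho_{\EE}$ is entirely determined by $\rho'$, and the conjugacy class of $\Gamma=\operatorname{im}\rho_{\EE}$ depends only on $\bar{\Gamma}$. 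Combining the two cases, $|\bm{\xi}^{-1}(\bar{\Gamma})|=1$ for every $\bar{\Gamma}\in K_{24}$, and therefore $|\widetilde{K}_{24}|=|K_{24}|=2962$ by Proposition \ref{prop_tot24}.

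The delicate point I expect to cost the most care is the compatibility hidden in the previous paragraph: once every standard loop is sent to its unstarred lift, one must know that the product of these lifts is $+I$ rather than $-I$ in $\SL(2,\ZZ)$, so that $\rho_{\EE}$ really is a homomorphism. This is not to be checked group by group; it is delivered by the surjectivity step itself, for the K3 surface produced there has Euler number $24$ and hence, by the Euler count just carried out, already realises the no-$*$-fibre homological invariant — that surface witnesses that the relation closes up with $+I$. In this way existence and uniqueness are proved together, no computation over the $2962$ individual subgroups is required, and the case $l\geq2$ never has to be analysed directly: it is ruled out for $e_2=0$ by Proposition \ref{prop_jfactor1824}, and for $e_2>0$ it is irrelevant, the lift being unique by Lemma \ref{lem_with2tors} regardless of $l$.
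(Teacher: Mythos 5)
Your proposal is correct and follows essentially the same route as the paper: reduce to showing $\bm{\xi}\colon\widetilde{K}_{24}\to K_{24}$ is a bijection, get surjectivity from Theorem \ref{theo_main42}, dispose of the $e_2>0$ case by uniqueness of the lift, and in the $e_2=0$ case use Proposition \ref{prop_jfactor1824} to force $l=1$ and the Euler-number count (your explicit computation is just Corollary \ref{cor_Euler}) to exclude $*$-fibres, which pins down the homological invariant and hence $\Gamma$ up to conjugacy. Your extra remark about the product relation closing with $+I$ only elaborates a point the paper leaves implicit, so no substantive difference.
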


\begin{proof} 
We have to see that the map from $\widetilde{K}_{24}$ to $K_{24}$ is a bijection. 
It is onto by Thm. \ref{theo_main42}.
Bijectivity is then automatic at all modular subgroups with $e_2>0$ since such groups
are the image only of its preimage in $SL_2(\ZZ)$.
For all modular subgroups Corollary \ref{cor_Euler} shows, that only for the unique choice with no
$*$-fibres we get a $K3$, which is thus the unique K3 with modular monodromy $\bar\Gamma$
and $j$-factor $l=1$.
By Prop. \ref{prop_jfactor1824} there is no further K3 with $j$-factor $l>1$ in the remaining cases
with $e_2=0$.
Hence bijectivity is shown and we take the number from Proposition \ref{prop_tot24}.
\end{proof}


\end{document}